%submission arXiv 14 Nov. 2025
\documentclass[a4paper]{article}
\usepackage{amsmath, amstext, amsxtra, amsthm, amscd, amsgen,amsbsy, amsopn, amsfonts, latexsym, amssymb, amscd, amsthm}
\usepackage{color}

\usepackage{tabularx}

\usepackage{dynkin-diagrams}

\usepackage{bbold}

\usepackage[all]{xy}

\usepackage{tikz-cd}

\usepackage[a4paper, total={6.5in, 8.5in}]{geometry}

\usepackage{tensor}

%%%%%%%%%%%%%MACROS%%%%%%%%%%%%%%%%

\makeatletter
\renewcommand\subsection{\@startsection{subsection}{2}{\z@}%
                                     {-3.25ex\@plus -1ex \@minus -.2ex}%
                                     {-1.5ex \@plus .2ex}%
                                     {\normalfont\large\bfseries}}

\def\wrt{with respect to}

\def\proofend{\hbox to 1em{\hss}\hfill $\blacksquare $\bigskip }

\newtheorem*{theorem*}{Main Theorem}
\newtheorem*{theoremA}{Theorem A}
\newtheorem*{theoremB}{Theorem B}
\newtheorem*{theoremC}{Theorem C}
\newtheorem*{theoremD}{Theorem D}

\newtheorem{theorem}{Theorem}[section]
\newtheorem{proposition}[theorem]{Proposition}
\newtheorem{lemma}[theorem]{Lemma}
\newtheorem{remark}[theorem]{Remark}

\newtheorem{definition}[theorem]{Definition}
\newtheorem{corollary}[theorem]{Corollary}
\newtheorem{example}[theorem]{Example}
\newtheorem{examples}[theorem]{Examples}

\newtheorem{condition}[theorem]{Condition}

\def\Z{{\mathbb Z}}
\def\R{{\mathbb R}}
\def\Q{{\mathbb Q}}
\def\C{{\mathbb C}}
\def\N{{\mathbb N}}

\def\Zp #1{{\mathbb Z}_{#1}}

\def\ind{\mathrm{ind}}
\def\tr{\mathrm{tr}}
\def\sign{\mathrm{sign}}

\def\id{\mathrm {id}}

\def\Diractwist#1#2{D_{#1,#2}}

\def\DiractwistAPS#1#2{D^+_{#1,#2}}

\def\so2{S^1}

\def\vvv{\textsf{v}}

\begin{document}

%********************************************

\title{Odd-dimensional manifolds with infinitely many different geometries of positive Ricci curvature}

\author{Anand Dessai}

\date{\today}

\maketitle
\begin{abstract} In every odd dimension $n\geq 5$ we exhibit large classes of closed $n$-dimensional manifolds which admit infinitely many different geometries of positive Ricci curvature, i.e., manifolds for which their moduli space of metrics of positive Ricci curvature has infinitely many connected components.
\end{abstract}

\section{Introduction}\label{Section: Introduction}

For a closed $n$-dimensional manifold $M$, let $\mathcal{R}_{Ric>0}(M)$ denote the space of Riemannian metrics of positive Ricci curvature on $M$, equipped with the smooth topology. The diffeomorphism group $\text{Diff}(M)$ acts on $\mathcal{R}_{Ric>0}(M)$ by pulling back metrics. Taking the quotient, one obtains the moduli space $\mathcal{M}_{Ric>0}(M)$ of metrics of positive Ricci curvature on $M$.

If $M$ admits a metric of positive Ricci curvature ($Ric> 0$ metric for short), one would like to understand whether $\mathcal{M}_{Ric>0}(M)$ carries non-trivial topology, e.g., whether the moduli space is disconnected or has non-trivial higher homotopy or homology groups. In every odd dimension $n\geq 5$ manifolds are known for which their moduli space of $Ric> 0$ metrics has infinitely many connected components. However, no other topological information seems to be available in dimension $>3$. This is in sharp contrast to the case of positive scalar curvature or nonnegative Ricci curvature, where manifolds have been found for which the corresponding moduli spaces have non-trivial higher homotopy or homology groups \cite{BHSW10,TW22}.

In the following, we will say that two $Ric> 0$ metrics on $M$ represent {\em different geometries of positive Ricci curvature} if the corresponding elements in $\mathcal{M}_{Ric>0}(M)$ belong to different connected components.

Starting with work of Kreck and Stolz \cite{KS93}, odd-dimensional manifolds have been found which admit infinitely many different geometries of positive Ricci curvature. These examples typically come in infinite families and belong to special classes of manifolds. They include certain Witten spaces and twisted variants \cite{KS93,DKT18,D22}, various rational homotopy spheres, highly connected manifolds and manifolds with core metric of dimension $4k+3$ \cite{B20,W11,TW15,D17,G20,W22,D24,RW25}, certain $4k+1$-dimensional homotopy real projective spaces \cite{DG21}, as well as particular families of five- and seven-dimensional manifolds \cite{G20,W21,GW22,G24,X25}.

In this paper we exhibit in every odd dimension $n\geq 5$ new classes of $n$-dimensional manifolds which admit infinitely many different geometries of positive Ricci curvature. These manifolds occur (up to a cover) as the boundary of manifolds which admit a {\em nice metric coordinate function}. The latter signifies, vaguely speaking, that the cobounding manifold shares geometric properties reminiscent to the process of shrinking of the fibers. We refer to Definitions \ref{nice coordinate function} and \ref{nice equivariant coordinate function} for precise information and just remark that many disk bundles and, more generally, plumbings of disk bundles fall into this setting. The construction of metrics of positive Ricci curvature is based on work of Wraith \cite{W98,W11} and Reiser \cite{R23}.

In dimension $n=4k+3\geq 7$ a large class of manifolds which admit infinitely many different geometries of positive Ricci curvature is given in Theorem \ref{AB*}. As an illustration, we point out the following two special cases.

\bigskip

\begin{theoremA}\label{theorem A} Let $B$ be a $p$-dimensional closed Riemannian manifold of positive Ricci curvature, $\xi $ a vector bundle of rank $q$ over $B$, and $M$ the total space of the associated sphere bundle. Suppose $M$ is spin, all Pontryagin classes of $M$ are torsion classes, and $\{p,q\}= \{4s, 4t\}$, $1\leq s\leq t<2s$.  Then $M$ admits infinitely many different geometries of positive Ricci curvature.
\end{theoremA}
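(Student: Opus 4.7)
The dimension of $M$ is $p+q-1=4(s+t)-1$, so $M$ lies in the range of Theorem \ref{AB*}. The plan is to derive Theorem A by verifying the hypotheses of Theorem \ref{AB*}, namely by exhibiting $M$ (or a cover) as the boundary of an infinite family of spin manifolds $W_i$ each carrying a nice metric coordinate function, and then appealing to a Kreck--Stolz / eta invariant to distinguish the resulting Ricci-positive metrics in $\mathcal{M}_{Ric>0}(M)$.

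The starting point is $W_0:=D(\xi)$, which bounds $M$; since $B$ has positive Ricci curvature a fiberwise shrinking equips $W_0$ with a nice metric coordinate function. Assume without loss of generality that $p=4s$ and $q=4t$; the other case is symmetric. To produce the remaining members of the family I would plumb $W_0$ at an interior point with a $\Z$-indexed family of disk bundles $D(\eta_i)\to S^{4t}$, where each $\eta_i$ has rank $4s$ and its classifying map is chosen so that $p_t(\eta_i)\in H^{4t}(S^{4t};\Z)$ grows linearly in $i$. The condition $t<2s$ is precisely what guarantees that $p_t$ is a well-defined non-top Pontryagin class of rank-$4s$ real bundles, and the existence of such a $\Z$-family follows from the fact that $\pi_{4t-1}(SO(4s))\otimes\Q\neq 0$ in this range. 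Plumbing at a point with a disk bundle over a round sphere preserves the nice-metric-coordinate-function structure (this is the extension of Wraith--Reiser underlying the paper), so each $W_i$ inherits such a structure, yielding a Ricci-positive metric $g_i$ on the boundary. The boundary $\partial W_i$ is diffeomorphic to $M\#\Sigma_i$ for some homotopy sphere $\Sigma_i$, and the ``up to a cover'' flexibility of Theorem \ref{AB*} is what absorbs this ambiguity.

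To separate the $g_i$, I would use the reduced eta invariant of the spin Dirac operator, or equivalently a Kreck--Stolz $s$-invariant. Because $M$ is spin with all Pontryagin classes torsion, the quantity $\int_{W}\hat{A}(TW)-\tfrac12\,\eta_{\mathrm{spin}}(M,g)$ is independent of the spin coboundary $W$ modulo a discrete subgroup of $\Q$, and hence descends to a diffeomorphism invariant of $(M,g)$ in $\R$ modulo that subgroup. Taking $W=W_i$, the top-degree component $\hat{A}_{s+t}(TW_i)$ contains a term proportional to $p_s p_t$, so pairing against the fundamental class picks up the contribution $\langle p_s(\xi)\,p_t(\eta_i),[W_i]\rangle$, which is linear in $i$. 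Hence the invariants take infinitely many values and the $g_i$ lie in infinitely many components of $\mathcal{M}_{Ric>0}(M)$.

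The main obstacle will be the coordination of the geometric and algebraic ingredients. Geometrically one must show that plumbing preserves the nice-metric-coordinate-function structure, which requires a careful analysis of the warped-product metric near the plumbing region; this is the content behind Theorem \ref{AB*}, following Wraith and Reiser. Algebraically and topologically, one must verify that the Kreck--Stolz invariant is non-constant on the family $\{g_i\}$, and here the numerical condition $1\leq s\leq t<2s$ is decisive: $s\leq t$ secures dimensional compatibility for the plumbing, while $t<2s$ keeps $p_t$ in the valid Pontryagin range of the rank-$4s$ bundle used in the plumbing and places the cross-term $p_s p_t$ in the correct position of $\hat{A}_{s+t}$ so that the invariant has non-zero rational coefficient.
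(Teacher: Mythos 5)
The hypotheses of Theorem \ref{AB*} concern a \emph{single} manifold $V$: it must be spin, carry a nice metric coordinate function $\psi: D^p\times D^q \hookrightarrow V$ with $\{p,q\}=\{4s,4t\}$, $1\le s\le t<2s$, and have boundary with torsion Pontryagin classes. The paper's proof of Theorem A is therefore short: take $V=D(\xi)$, observe that $V$ is spin (since $q\ge 4$, the Gysin sequence shows $w_1(TV)=w_2(TV)=0$ because they restrict injectively to $M=\partial V$, which is spin), note that $V$ has a nice metric coordinate function by Remark \ref{nice coordinate function remark} (shrinking the fibers, using $Ric>0$ on $B$), and then Theorem \ref{AB*} does all the remaining work. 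You misread the hypotheses as asking for an infinite family of coboundaries $W_i$, and consequently your proposal spends most of its length re-deriving the internal machinery of Theorem \ref{AB*} rather than simply invoking it. You also omit the spin check for $V$: the hypothesis is that $M$ is spin, not $V$, and the Gysin argument is needed to pass from $M$ to $V$.

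Your attempted re-derivation also contains a genuine topological error. Plumbing $D(\xi)$ with a single disk bundle $D(\eta_i)\to S^{4t}$ does not produce a manifold with boundary $M\sharp\Sigma_i$; in general the boundary of such a plumbing is not a connected sum with $M$ at all. The paper instead uses Milnor plumbings $W_i=W(\theta_1^i,\theta_2^i)$ of \emph{two} disk bundles over spheres, with $(\theta_1^i,\theta_2^i)$ chosen in the kernel of the Milnor pairing $\beta$ so that $\partial W_i$ is the standard sphere (possible because $\Theta_{p+q-1}$ is finite and $p_s,p_t$ are non-trivial on $\pi_{4s-1}(SO(4t-1))$, $\pi_{4t-1}(SO(4s-1))$); these $W_i$ are attached to $V$ through the auxiliary plumbing $\mathcal{D}$ of Lemma \ref{lemma 2 connected sum}, which forces $\partial \mathcal{P}_i\cong\partial V\sharp\partial W_i\cong M$. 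Without arranging $\partial W_i=S^{p+q-1}$ the boundaries vary with $i$ and you are not comparing metrics on a fixed manifold; there is no ``up to a cover'' flexibility in Theorem \ref{AB*} to absorb this, as that phrase in the introduction refers to the $4k+1$-dimensional $\Zp 2$-quotient case. Finally, the distinguishing computation you sketch, $\langle p_s(\xi)p_t(\eta_i),[W_i]\rangle$, is problematic: $W_i$ has nonempty boundary, and the classes $p_s(\xi)$ and $p_t(\eta_i)$ are supported in separate regions of the plumbing, so the product vanishes. The paper evaluates $\hat{A}$ on the \emph{closed} manifolds $Y_{(i,j)}=W_i\cup_\Sigma-W_j$, where Milnor's formula $\hat A(Y_{(i,j)})=c(s,t)\bigl(p_s(\theta_1^i)p_t(\theta_2^i)-p_s(\theta_1^j)p_t(\theta_2^j)\bigr)$ supplies the required nonvanishing, and it isolates $V$ entirely by an argument on mapping tori using $p(\partial V)=1\in H^*(\partial V;\Q)$.
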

The theorem seems to be new even in the case when the bundle is trivial. Special cases and variants of Theorem A are discussed in Section \ref{section 4k+3} (see Corollary \ref{Cor 1 Thm A},  Corollary \ref{Cor 2 Thm A}, and Remark \ref{remark rp4}).

Next, consider the plumbing $W=\square _{\Gamma}E_\vvv$ of disk bundles according to a tree $\Gamma$. Here, $E_\vvv$ denotes the total space of a linear disk bundle over the sphere $S^p$ (resp. $S^q$) of rank $q$ (resp. $p$) associated to a vertex $\vvv$ of $\Gamma $, and two disk bundles are plumbed together if the respective vertices are connected by an edge of $\Gamma $. Let $\xi $ be a vector bundle of rank $q$ over a $p$-dimensional closed Riemannian manifold $B$ of positive Ricci curvature for which the total space of the associated sphere bundle is spin. Let $E$ denote the total space of the disk bundle associated to $\xi$. Consider the plumbing of $E$ and $W$ \wrt \ some disk bundle of $W$.

\begin{theoremB}\label{theorem B} Let $M$ be the boundary of the plumbing  $E\square W$. If $p(M)=1\in H^*(M;\Q )$ and $\{p,q\}= \{4s, 4t\}$, $1\leq s\leq t<2s$, then $M$ admits infinitely many different geometries of positive Ricci curvature.
\end{theoremB}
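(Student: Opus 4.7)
The plan is to deduce Theorem B as a special case of the main result Theorem \ref{AB*}, whose hypotheses ask for $M$ (up to a cover) to arise as the boundary of a manifold admitting a nice metric coordinate function in the sense of Definitions \ref{nice coordinate function} and \ref{nice equivariant coordinate function}. Accordingly, the proof splits into two parts: (i) exhibiting a family of cobounding plumbings with boundary $M$ that all admit such structures, and (ii) showing that the resulting positive Ricci curvature metrics on $M$ lie in infinitely many connected components of $\mathcal{M}_{Ric>0}(M)$.

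For part (i), I would first equip the disk bundle $E\to B$ with a nice metric coordinate function built from the positive Ricci metric on $B$ and the linear structure of $\xi$; the key geometric input is that the fibers can be radially shrunk, which is the Wraith--Reiser mechanism underlying \cite{W98,W11,R23}. Each vertex bundle $E_\vvv$ in the plumbing $W=\square_\Gamma E_\vvv$ is a linear disk bundle over a round sphere $S^p$ or $S^q$, and so admits a nice metric coordinate function for the same reason. I would then invoke the compatibility of this property with the plumbing operation (this is exactly what is built into the framework of the paper) to conclude that $E\square W$ admits a nice metric coordinate function, placing $M=\partial(E\square W)$ within the scope of Theorem \ref{AB*}.

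For part (ii), I would construct an infinite family of plumbings $E\square W_i$ with common boundary $M$ by varying the clutching data of the linear disk bundles at the vertices of $\Gamma$ (adding multiples of suitable generators of $\pi_{p-1}(SO(q))$ or $\pi_{q-1}(SO(p))$). This is the standard Kreck--Stolz-type device and does not alter the boundary diffeomorphism type when performed on appropriate vertices. To separate the induced metrics in the moduli space, I would use the eta-type invariant provided by Theorem \ref{AB*}, which is well-defined precisely because $p(M)=1\in H^*(M;\Q)$ forces the relevant APS correction terms to be topological. The dimensional constraint $\{p,q\}=\{4s,4t\}$ with $1\le s\le t<2s$ is then used to show that the Pontryagin characteristic numbers of $E\square W_i$ entering the invariant grow unboundedly with $i$, while the ambiguity coming from the action of $\text{Diff}(M)$ remains bounded, yielding infinitely many distinct values in $\Q/\Z$.

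The main obstacle I anticipate is part (ii): verifying that the variation of the plumbing data produces characteristic numbers that actually exhaust infinitely many residue classes of the invariant modulo its diffeomorphism-induced ambiguity. The inequality $t<2s$ is likely the critical ingredient here, since it ensures that the dominant monomial in the relevant $\hat{A}$- or $L$-style Pontryagin polynomial evaluated on $E\square W_i$ grows faster than the lower-order terms that could be absorbed by the ambiguity; the symmetric lower bound $s\le t$ is used to keep both bundle factors within the range where the construction and the invariant are simultaneously available. Once this non-triviality is established, the conclusion follows by combining it with the Wraith--Reiser existence theorem and Theorem \ref{AB*}.
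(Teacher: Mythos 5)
Your overall strategy, deducing Theorem~B from Theorem~\ref{AB*}, is the right one, and your part~(i) captures the correct geometric content: $E$ has a nice metric coordinate function (Remark~\ref{nice coordinate function remark}), and this property propagates through the plumbing with $W$ (Remark~\ref{remark plumbing nice coordinate function}), so $V:=E\square W$ falls under the hypotheses of Theorem~\ref{AB*}. However, you omit one hypothesis verification that the paper does carefully: you must check that $V$ is spin. In the paper this follows because the plumbing $W$ of disk bundles over spheres is $3$-connected (hence spin), and because $\partial E$ being spin is equivalent to $E$ being spin when $q\geq 4$. Without this check, Theorem~\ref{AB*} cannot be applied.

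The more significant problem is your part~(ii). Once you have verified the hypotheses of Theorem~\ref{AB*} for $V=E\square W$, you are done; the paper's proof of Theorem~B is precisely this one-paragraph verification plus ``apply Theorem~\ref{AB*}.'' Your part~(ii) instead tries to reconstruct the index-theoretic core of Theorem~\ref{AB*}, and does so incorrectly. The invariant is not an eta-type invariant valued in $\Q/\Z$: the boundary has dimension $4(s+t)-1\equiv 3\pmod 4$, so the paper uses the integer-valued basic index difference (the $\hat A$-genus of a closed spin manifold obtained by gluing two plumbings along their common boundary), with eta invariants reserved for the $4k+1$-dimensional Theorem~\ref{CD*}. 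The constraint $1\le s\le t<2s$ is not about ``dominant monomials growing faster than lower-order terms''; it is precisely the stable range in which both Pontryagin homomorphisms $p_s:\pi_{4s-1}(SO(4t-1))\to\Z$ and $p_t:\pi_{4t-1}(SO(4s-1))\to\Z$ are non-trivial, which is what makes the Milnor pairing yield plumbings $W(\theta_1^i,\theta_2^i)$ with spherical boundary and pairwise distinct $\hat A$-genera after gluing. The infinite family is produced not by perturbing the clutching data of the given $W$, but by connect-summing $V$ with these auxiliary plumbings via the intermediate plumbing $\cal D$ of Lemma~\ref{lemma 2 connected sum}. Finally, $p(M)=1\in H^*(M;\Q)$ is used to force the $\hat A$-genus of the mapping torus $M_{F_{\partial V}}$ to vanish (so the diffeomorphism ambiguity does not affect the index difference), not to make APS correction terms topological. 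If you instead simply cite Theorem~\ref{AB*} after verifying its three hypotheses, none of this needs to be rederived.
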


The conclusion also holds true in the more general situation where $W$ is a plumbing of disk bundles $E_\vvv \to B_\vvv$  according to a tree, $W$ is spin, and each base manifold $B_\vvv$ admits a core metric. If the base manifold $B$ of $\xi $ admits a core metric then the theorem also follows from \cite{B20,R23} (see Remark \ref{remark core infty}).
% (see Remark \ref{remark core infty}).

In the proofs of Theorem \ref{AB*} and the two theorems above, geometries of positive Ricci curvature are distinguished using the basic index difference for spin manifolds. Alternatively, one could utilize Kreck-Stolz invariants. Both approaches require that $M$ is $4k+3$-dimensional and that its rational Pontryagin classes vanish.

In dimension $n=4k+1\geq 5$ eta invariants can be used to distinguish geometries of positive Ricci curvature on non-simply connected manifolds in favorable situations (see, for example, \cite{DG21,W21,D22,GW22,G24}).

Theorem \ref{CD*} describes a large class of manifolds for which eta invariants detect infinitely many different geometries of positive Ricci curvature. Here, we like to point out the following special cases.

\begin{theoremC}\label{theorem c} Let $B$ be a closed connected manifold of dimension $2k+1\geq 3$ with a $\Zp 2$-action with isolated fixed points and an invariant $Ric >0$ metric. Then the total space $S(TB)$ of the sphere tangent bundle has a smooth free $\Zp 2$-action such that the quotient manifold admits infinitely many different geometries of positive Ricci curvature.
\end{theoremC}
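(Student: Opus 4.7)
The plan is to derive Theorem C from the general Theorem \ref{CD*} by verifying that the freeness and cobordism hypotheses of that theorem are met by $S(TB)$. Three things need to be checked: the induced $\Zp 2$-action on $S(TB)$ is free; an invariant family of positive Ricci curvature metrics on $S(TB)$ can be constructed via the paper's framework of nice metric coordinate functions; and infinitely many of the resulting quotient metrics on $\bar M := S(TB)/\Zp 2$ lie in different components of the moduli space.

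First I would check freeness of the induced action. The involution $\sigma$ on $B$ acts isometrically for the invariant metric, so its differential $d\sigma$ preserves $S(TB)$. At an isolated fixed point $p$ of $\sigma$, the differential $d\sigma_p$ on $T_pB$ cannot have a $+1$-eigenvalue (otherwise the fixed set would be locally positive-dimensional); since $(d\sigma_p)^2 = \id$, it follows that $d\sigma_p = -\id$ on all of $T_pB$. The induced action on the unit sphere $S(T_pB)$ is therefore the antipodal map, which is fixed-point free. At points $q \in B$ with $\sigma(q)\neq q$ the action on the fiber $S(T_qB)$ is trivially free. Note that $\dim \bar M = (2k+1) + 2k = 4k+1$, so we are in the regime where eta invariants are effective.

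Next I would produce the equivariant family of positive Ricci metrics. The Levi-Civita connection on $TB$ associated with the invariant $Ric>0$ metric is $\Zp 2$-equivariant; together with an invariant radial profile, this supplies the equivariant data needed for the notion of a nice metric coordinate function on the unit disk bundle $D(TB)$ (cf.\ Definitions \ref{nice coordinate function} and \ref{nice equivariant coordinate function}). The Wraith--Reiser construction underlying this paper then yields a countable family of $\Zp 2$-invariant $Ric>0$ metrics on the boundary $S(TB)$, varying with suitable twisting parameters, all of which descend to $\bar M$.

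Finally I would invoke Theorem \ref{CD*} to distinguish infinitely many of these geometries by eta invariants. The equivariant Atiyah--Patodi--Singer index theorem, applied to $D(TB)/\Zp 2$ (with its orbifold singularities at the fixed points), expresses differences of eta invariants of the twisted Dirac operator on $\bar M$ as a sum of local contributions at the isolated fixed points of $\sigma$ on $B$ together with an integral of characteristic forms over the smooth locus. The isolated fixed point hypothesis is precisely what makes these local contributions computable in closed form, and it is the heart of the matter in Theorem \ref{CD*}: one must show that the family of twisting parameters can be chosen so that the resulting eta values are pairwise distinct. I expect this eta-invariant calculation, carried out in the proof of Theorem \ref{CD*}, to be the main obstacle; once it is in hand, Theorem C follows because the eta invariant is constant on connected components of $\mathcal{R}_{Ric>0}(\bar M)$ and descends to a well-defined function on $\mathcal{M}_{Ric>0}(\bar M)$.
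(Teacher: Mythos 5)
Your overall strategy—reducing Theorem C to Theorem \ref{CD*} by checking its hypotheses on $V = D(TB)$—is the right one, and your argument for freeness of the induced action on $S(TB)$ (at an isolated fixed point $p$, $d\sigma_p^2 = \id$ with no $+1$-eigenvalue forces $d\sigma_p = -\id$, hence the antipodal map on $S(T_pB)$) is a clean and correct elaboration of what the paper merely asserts. The nice equivariant metric coordinate function piece is also correctly identified, via Remark \ref{nice equivariant coordinate function remark}.

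However, there is a genuine gap: you never verify Condition \ref{conditions 4k+1}.3, the existence of an invariant $Spin^c$-structure on $V = D(TB)$ with flat associated principal $U(1)$-bundle. This is not a peripheral hypothesis—it is the input that makes the twisted $Spin^c$-Dirac operator and eta invariant machinery in the proof of Theorem \ref{CD*} available at all, and it is the only nontrivial verification in the paper's proof of Theorem C. The paper establishes it by noting that the Levi-Civita connection of the invariant metric on $B$ gives an equivariant identification $TV \cong \pi^*(TB \otimes \C)$, hence an invariant almost complex structure on $V$ whose first Chern class $c_1(TB \otimes \C)$ is $2$-torsion (being an odd Chern class of a complexified real bundle); thus the associated principal $U(1)$-bundle admits a flat invariant connection. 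You instead describe "the equivariant APS index theorem applied to $D(TB)/\Zp 2$ with its orbifold singularities," which is neither the mechanism the paper uses (the paper applies Donnelly's fixed-point formula to equivariant plumbings $W_{8l}\square V$, not to the orbifold quotient of $V$ itself) nor a substitute for producing the $Spin^c$-structure. Until Condition \ref{conditions 4k+1}.3 is established, Theorem \ref{CD*} cannot be invoked and the proof is incomplete.
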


\begin{theoremD}\label{theorem d} Let $\xi $ be a $\Zp 2$-equivariant vector bundle of rank $p$ over a $p$-dimensional closed manifold $B$ and let $p=2k+1\geq 3$. Suppose $\Zp 2$ acts with isolated fixed points. Suppose $B$ is $2$-connected and admits an invariant metric of positive Ricci curvature. 
Then the total space $S(\xi )$ of the associated sphere bundle has a smooth free $\Zp 2$-action such that the quotient manifold admits infinitely many different geometries of positive Ricci curvature.
\end{theoremD}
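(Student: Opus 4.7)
The plan is to equip $S(\xi)$ with a smooth free $\Zp 2$-action, build invariant Ricci-positive metrics on $S(\xi)$ as boundaries of equivariant $\Zp 2$-manifolds carrying a nice equivariant metric coordinate function, and distinguish the descended metrics on $N := S(\xi)/\Zp 2$ by means of the eta invariant of a Dirac-type operator twisted by the flat line bundle associated to $\pi_1(N)=\Zp 2$. Away from the isolated fixed set $B^{\Zp 2}$ the induced action on $S(\xi)$ is automatically free. At each fixed point $b \in B^{\Zp 2}$ the involution on the fiber $\xi_b \cong \R^p$ is diagonalisable with eigenvalues $\pm 1$; by modifying the equivariant structure on $\xi$ (for instance by tensoring with the sign representation in a neighbourhood of the fixed set) one can arrange that at every $b \in B^{\Zp 2}$ the involution acts as $-\text{id}$, and hence freely on the fiber sphere. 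Since $B$ is $2$-connected and $S^{p-1}$ is simply connected for $p \geq 3$, the long exact sequence for the fibration $S^{p-1} \to S(\xi) \to B$ gives $\pi_1(S(\xi))=0$, so $N$ is a smooth closed manifold of dimension $4k+1$ with $\pi_1(N)=\Zp 2$.

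Next I would construct the family of metrics. The disk bundle $D(\xi) \to B$ admits a $\Zp 2$-invariant nice equivariant metric coordinate function in the sense of Definition \ref{nice equivariant coordinate function}, built from the given invariant $Ric > 0$ metric on $B$ together with a radial function on the fibers. Applying the equivariant version of the construction behind Theorem \ref{CD*}, which follows the techniques of \cite{W98,W11,R23}, one obtains an invariant Ricci-positive metric on $D(\xi)$ that is a product near the boundary. To produce an infinite family I would equivariantly plumb $D(\xi)$ with a sequence of linear disk bundles $\{D(\eta_i)\}_{i\in \N}$ attached along free $\Zp 2$-orbits disjoint from the fixed set, chosen so that each resulting $W_i$ has boundary equivariantly diffeomorphic to $S(\xi)$ with the same free action. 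Each $W_i$ then carries an invariant Ricci-positive metric, product near the boundary, whose restriction $g_i$ descends to a Ricci-positive metric $\bar g_i$ on $N$.

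To separate the $\bar g_i$ in $\mathcal{M}_{Ric>0}(N)$, I would use the APS index theorem for the Dirac operator on $N$ twisted by the flat complex line bundle $L$ associated to the nontrivial character of $\pi_1(N)$. Applied to the orbifold quotients $W_i / \Zp 2$, with their isolated singularities coming from the fixed points of $B$, the theorem expresses $\eta_L(\bar g_i) - \eta_L(\bar g_j) \in \R/\Z$ as a relative characteristic number of $W_i$ against $W_j$. By choosing the plumbed bundles so that this relative characteristic number runs over infinitely many classes in $\Q/\Z$, the $\bar g_i$ land in infinitely many components of $\mathcal{M}_{Ric>0}(N)$.

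The main obstacle will be the equivariant coordination in the last two steps: one must simultaneously preserve the equivariant diffeomorphism type of the boundary, keep the $W_i$ and their quotients in a category where a twisted Dirac operator is defined on $N$, and ensure that the resulting topological contributions are unbounded modulo $\Z$. The three hypotheses of the theorem are tailored precisely for this: the condition $p=2k+1$ places $N$ in dimension $4k+1$, the right parity for the twisted eta invariant to serve as a separating secondary invariant; isolated fixed points keep the orbifold APS analysis explicit and local; and the $2$-connectedness of $B$ pins down $\pi_1(N)=\Zp 2$ so that the flat line bundle $L$ exists canonically.
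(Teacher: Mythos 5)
There are two substantive gaps, one of which is fatal to the proposed argument. First, you plumb the auxiliary disk bundles $D(\eta_i)$ to $D(\xi)$ ``along free $\Zp 2$-orbits disjoint from the fixed set.'' This is the wrong place to plumb. The separation invariant (whether phrased as the twisted eta invariant of the quotient, the $\R/\Z$-reduced APS defect, or the equivariant eta invariant of the cover) is computed via Donnelly's $G$-index theorem: for an invariant $scal>0$ metric on the even-dimensional filling $W$, the equivariant index and kernel term vanish, so $\frac12\,\eta_{\mathcal{D}_{\partial W}}(\tau)$ equals the sum of the local Lefschetz contributions $a_N$ over the isolated fixed points $N\subset W^\tau$. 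Attaching $D(\eta_i)$ at free orbits does not change $W^\tau$, so the right-hand side is independent of $i$ and every $\eta_l$ would coincide, yielding no separation. The paper instead plumbs at isolated $\Zp 2$-fixed points: equivariantly plumbing $8l$ copies of $D(TS^{2k+1})$ at fixed points (and then plumbing the result into $V$ at the fixed point given by $\psi$) adds new fixed points whose number grows with $l$, which is precisely what makes the eta invariants unbounded. Incidentally, your proposed modification of the equivariant structure of $\xi$ by tensoring with the sign representation is also problematic: that negates all eigenvalues simultaneously and cannot convert a mixed $\pm1$ involution on $\xi_b$ to $-\mathrm{id}$. But this step is in any case unnecessary, since ``$\Zp 2$ acts with isolated fixed points'' is the hypothesis on $D(\xi)$, which already forces $-\mathrm{id}$ on fibers over fixed points of $B$.

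Second, you invoke a Dirac operator on $N$ twisted by the flat line bundle $L$ without explaining why such an operator exists — that is, why $N$ (equivalently $V$) carries a suitable $Spin^c$-structure. This is actually the main content of the proof of Theorem D once one has the general Theorem \ref{special CD* 1}: one shows that $X:=V$ minus invariant disks at fixed points is $2$-connected (using $2$-connectedness of $B$), hence $\overline X := X/\Zp 2$ has $H^i(\overline X) \cong H^i(\R P^\infty)$ for $i<3$, hence $\overline X$ admits a $Spin^c$-structure, hence $X$ has a $\Zp 2$-invariant $Spin^c$-structure that extends over the removed disks by Lemma \ref{lemma spinc on disk}. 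You say ``the $2$-connectedness of $B$ pins down $\pi_1(N)=\Zp 2$,'' but that only needs $1$-connectedness; the $2$-connectedness is needed exactly to produce the $Spin^c$-structure that makes the eta-invariant machinery available.
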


The conclusion also holds true if one replaces the bundle $\xi $ by certain $\Zp 2$-equivariant plumbings (see Theorem \ref{special CD* 2}). Plumbings of this kind have been used in \cite{DG21} to show that for any $k\geq 1$ there are homotopy $\R P^{4k+1}$s which carry infinitely many different geometries of positive Ricci curvature. The construction of these geometries is a crucial ingredient in the proof of Theorem \ref{CD*}. In \cite{D25} a similar approach is used to exhibit examples of even-dimensional manifolds which admit infinitely many different geometries of positive Ricci curvature.
We remark that the proofs of Theorems C and D only show the conclusion for {\em some} free $\Zp 2$-action but not necessarily for the action induced by the {\em given} action.

\medskip

The proofs of Theorem \ref{AB*} and Theorem \ref{CD*} consist of three parts, which are of geometric, topological, and index-theoretical nature. The reasoning differs depending on the dimension, either being of the form $4k+1$ or $4k+3$. We give a rough outline.

The main geometrical ingredient in the proofs is the construction of $scal>0$ metrics on even-dimensional plumbings which restrict to $Ric>0$ metrics on the boundary. Metrics of this kind were (up to deformation within positive scalar curvature) first constructed by Wraith in his work \cite{W11} on the moduli space of positive Ricci curvature metrics on homotopy spheres (see also \cite{W95,W97,W98}). We will adapt a slightly different approach, due to Reiser \cite{R23}, to construct metrics with the above properties on the generalized plumbings considered in this paper. These plumbings are of the form $W\square V$ where $V$ is a manifold with a nice metric coordinate function (see Definition \ref{nice coordinate function}) and $W$ is a plumbing of disk bundles (see Theorem \ref{plumbing theorem} for details). If the dimension of the boundary $\partial (W\square V)$ is of the form $4k+1$, we consider $\Zp 2$-equivariant plumbings with free action on the boundary and construct such metrics on $W\square V$ which are in addition invariant under the action (see Theorem \ref{equivariant plumbing theorem} for details).

On the topological side, we want to find generalized plumbings as above with (equivariantly) diffeomorphic boundaries.
To do so, we construct plumbings $W\square V$ for which $\partial (W\square V)$ is diffeomorphic to the connected sum $\partial W\sharp \partial V$ and $\partial W$ is a sphere (see Lemma \ref{lemma 2 connected sum} and the proofs of Theorem \ref{AB*} and Theorem \ref{CD*} for details).

The geometric and topological constructions yield $Ric >0$ metrics on a closed manifold $M$ which is diffeomorphic to the boundary of infinitely many generalized plumbings. If $M$ is $4k+1$-dimensional, then $M$ comes with a free $\Zp 2$-action and invariant $Ric >0$ metrics. The metrics on $M$ (in dimension $4k+3$) and on the quotient $\overline M :=M/\Zp 2$ (in dimension $4k+1$) give rise to elements in $\mathcal{M}_{Ric>0}(M)$ and $\mathcal{M}_{Ric>0}(\overline M)$, respectively.

In the last part of the proofs, we use the basic index difference of Gromov-Lawson (see \cite{LM89} and references therein) to show that the elements in the moduli space $\mathcal{M}_{Ric>0}(M)$ of the $4k+3$-dimensional manifold $M$ represent infinitely many connected components, i.e., the corresponding $Ric >0$ metrics on $M$ represent infinitely many different geometries of positive Ricci curvature. In the $4k+1$-dimensional case, we use eta invariants to show that the $Ric >0$ metrics on $\overline M$ represent infinitely many different geometries of positive Ricci curvature. For the study of moduli spaces of metrics of positive scalar curvature this approach goes back to work of Atiyah-Patodi-Singer \cite{APSII75} and Botvinnik-Gilkey \cite{BG95}. 

Finally, we like to remark that by combining the above reasoning with \cite{RW25} (resp. \cite{R23,R24}) one can find examples of manifolds for which the moduli space of metrics of positive intermediate Ricci curvature  (resp. moduli space of core metrics) has infinitely many connected components.

The paper is structured as follows. In the next section we briefly discuss basic properties of spaces of metrics, their moduli spaces, and geometries of positive Ricci curvature.
In Section \ref{section topological plumbing} we recall the plumbing construction for disk bundles, generalizations thereof, and a connected sum construction via plumbing. Section \ref{section geometric plumbing} is dedicated to the construction of $Ric >0$ metrics on the boundary of generalized plumbings. These plumbings are of the form $W\square V$, where $V$ has a nice metric coordinate function (see Definition \ref{nice coordinate function}) and $W$ is a plumbing of disk bundles. The metrics in question extend to $scal >0$ metrics on $W\square V$, which are of product type near the boundary. The section also contains some relevant geometric background. In Section \ref{Equivariant plumbing} we consider $\Zp 2$-equivariant plumbings and extend the aforementioned topological and geometric properties of plumbings to the equivariant setting. Section \ref{index section} reviews those aspects of (equivariant) index theory which will be relevant for the detection of different connected components of the moduli space.

Theorem \ref{AB*}, which is our main theorem for $4k+3$-dimensional manifolds, as well as Theorems A and B and other special cases are proved in Section \ref{section 4k+3}. The proofs of Theorem \ref{CD*}, Theorems C and D and related results for $4k+1$-dimensional manifolds are given in Section \ref{section 4k+1}.

\bigskip
\noindent
{\em Acknowledgements.} I like to thank Philipp Reiser for helpful comments and for a suggestion how to treat a step in the proof of Theorem \ref{plumbing theorem}. I am also greatful to Michael Wiemeler for useful remarks. This work was supported in part by the SNSF-Project 200020E\_193062 and the DFG-Priority programme  {\em Geometry at infinity} (SPP 2026).

%%%%%%%%%%%%%%%%%%%%%%%%%%%%%%%%%%%%%%%%%%%%%%%%%%%%%%%%%%%%%%%%%%%%%%%%%%%%%%%%%%%%%

\pagebreak
\section{Moduli spaces and geometries of positive Ricci curvature}\label{moduli}

Let $M$ be a closed $n$-dimensional manifold. Consider the space $\mathcal{R}(M)$ of Riemannian metrics on $M$, endowed with the smooth topology of uniform convergence of all derivatives. The group $\text{Diff}(M)$ of diffeomorphisms of $M$ acts on $\mathcal{R}(M)$ by pulling back metrics. The {\em moduli space of metrics} $\mathcal{M}(M)$ is defined as the orbit space $\mathcal{R}(M)/\text{Diff}(M)$, equipped with the quotient topology.

Note that two metrics $g_0, g_1\in 
\mathcal{R}(M)$ yield the same element in $\mathcal{M}(M)$ if and only if $(M,g_0)$ and $(M,g_1)$ are isometric. The isotropy group of the action of $\text{Diff}(M)$ on $\mathcal{R}(M)$ at $g$ is equal to the isometry group of $(M,g)$.
Note also that if $M$ and $N$ are of the same diffeomorphism type, then a metric on $M$ defines a well-defined element in the moduli space of $N$.

Next, consider a condition $\cal C$ for metrics which is preserved under the pullback of metrics under diffeomorphisms (for example, $\cal C$ could be a lower bound on scalar or Ricci curvature). 
Let $\mathcal{R}_{\cal C}(M)\subset \mathcal{R}(M)$ denote the subspace of metrics satisfying condition $\cal C$ and let $\mathcal{M}_{\cal C}(M):=\mathcal{R}_{\cal C}(M)/\text{Diff}(M)$ be the respective moduli space. 

Whereas the space of Riemannian metrics $\mathcal{R}(M)$ is a convex cone and, hence, contractible, the space $\mathcal{R}_{\cal C}(M)$ may have non-trivial topology. For example, if $M$ is spin and if condition $\cal C$ implies positive scalar curvature,  then the corresponding space $\mathcal{R}_{{\cal C}}(M)$, if not empty, may carry some non-trivial topology, which comes from the topology of an orbit of the action of $\text{Diff}(M)$ (this goes back to Hitchin, see \cite[IV \S 7]{LM89} and references therein). Note, however, that this kind of topological information disappears once one passes to the moduli space.

At present, the only known non-trivial topological information about the moduli space $\mathcal{M}_{Ric>0}(M)$ of metrics of positive Ricci curvature is related to the number of its connected components. This is in sharp contrast to the case of positive scalar curvature or nonnegative Ricci curvature where manifolds have been constructed for which the moduli space of $scal>0$ or $Ric \geq 0$ metrics has infinitely many non-trivial homotopy or homology groups, see \cite{BHSW10} or \cite{TW22}, respectively (see also \cite{TW15} and references therein for a survey of some of the earlier results in this direction).  We will use the following terminology.

\begin{definition}\label{different geometries} Two metrics on $M$ of positive Ricci curvature are said to represent {\bf \em different geometries of positive Ricci curvature} if the corresponding elements in the moduli space belong to different connected components of $\mathcal{M}_{Ric>0}(M)$.
\end{definition}
Note that $\mathcal{M}_{Ric>0}(M)$, being the quotient of a locally path-connected space, is locally path-connected as well. Hence, connected components are path components.

Clearly, two $Ric>0$ metrics which belong to the same connected component of $\mathcal{R}_{Ric>0}(M)$ represent the same geometry of positive Ricci curvature. Conversely, if $g_0$ and $g_1$ are metrics on $M$ which represent the same geometry of positive Ricci curvature, then there is a path inside $\mathcal{R}_{Ric>0}(M)$ connecting $g_0$ with $F^*(g_1)$ for some diffeomorphism $F$. This follows from Ebin's slice theorem \cite{E70} which implies that a path in $\mathcal{M}_{Ric>0}(M)$ can always be lifted to $\mathcal{R}_{Ric>0}(M)$ (see \cite[Prop. VII.6]{B75}, \cite[Lemma 4.4]{Wi21}). 

For later reference, we note that if $\cal G$ is a subgroup of $\text{Diff}(M)$ of finite index, then $M$ carries infinitely many different geometries of positive Ricci curvature if and only if the quotient space $\mathcal{R}_{Ric>0}(M)/{\cal G}$ has infinitely many connected components, i.e., if and only if $\pi_0(\mathcal{R}_{Ric>0}(M)/{\cal G})$ is infinite. This applies, for example, if $M$ is spin and ${\cal G}$ is the subgroup of spin preserving diffeomorphisms or, if $M$ is $Spin^c$ with $H^2(M;\Z )$ finite and ${\cal G}$ is the subgroup of diffeomorphisms preserving the $Spin^c$-structure.

In Sections 7--8 we will describe large classes of manifolds which admit infinitely many different geometries of positive Ricci curvature.

\section{Topological Plumbing}\label{section topological plumbing}

In this section we first discuss plumbing of disk bundles following Milnor \cite{M59,M60} (see also \cite{HM68,B72}) and then extend the plumbing construction to a more general setting. At the end of the section, we recall a basic connected sum construction via plumbing.

\subsection{Plumbing of disk bundles}\label{subsection plumbing disk bundles} 
Let $\xi$ be an oriented real Riemannian vector bundle of rank $p\geq 1$ over an oriented compact connected base manifold $B^q$ of dimension $q\geq 1$. Let $E\to B$ and $\partial E\to B$ denote the associated linear disk and sphere bundle, respectively.
If not stated otherwise, we will assume that $B$ has empty boundary.  Note that $\partial E$ is the boundary of $E$ in this situation.

Let $\overline \varphi  :D^q\hookrightarrow B$ be an embedding of the closed unit disk $D^q\subset \R ^q$ and let $\varphi : D^q\times D^p\hookrightarrow E$ be a coordinate function of the bundle covering $\overline \varphi $, i.e., $E_{\vert \overline \varphi (D^q)}\to  \overline \varphi (D^q)\times D^p$, $\varphi (x,v)\mapsto (\overline \varphi (x),v)$, is a local trivialization of the linear disk bundle $E\to B$. We will call $\varphi $ an \emph{orientation preserving coordinate function} if $\varphi $ is orientation preserving when restricted to fibers and if $\overline \varphi $ is orientation preserving.

It follows from the disk theorem that any two orientation preserving coordinate functions $\varphi _i$, $i=0,1$, are isotopic and the isotopy extends to an ambient isotopy, i.e., there exists a smooth family of coordinate functions $\varphi _t$, $t\in[0,1]$, connecting $\varphi _0$ and $\varphi _1$ and there is a smooth family of diffeomorphisms $f_t:E\to E$ with $f_0=\mathrm{id}$ and $\varphi _t=f_t\circ \varphi _0$.

Next, consider a rank $p$ vector bundle $\xi _1$ over $B_1^q$ as before and another oriented real Riemannian vector bundle $\xi _2$ of rank $q$ over an oriented compact connected base manifold $B_2^p$. Let $\varphi _1: D^q\times D^p\hookrightarrow E_1$ be an orientation preserving coordinate function as before and let $\varphi _2: D^p\times D^q\hookrightarrow E_2$ be an orientation preserving coordinate function of the disk bundle associated to $\xi _2$.

In this situation one can plumb the two disk bundles together using cross identification. The resulting space is obtained from the disjoint union $E_1\coprod E_2$ by identifying $\varphi _1(x,y)$ with $\varphi _2(y,x)$, $(x,y)\in D^q \times D^p$, and straightening out the angles. More generally, one could make the identification using any diffeomorphism $D^q\times D^p\to D^p\times D^q$. If not stated otherwise, we will always use the diffeomorphism $I:D^q\times D^p\to D^p\times D^q$, $(x,y)\mapsto(y,x)$.

The plumbing of the two disk bundles $E_i\to B_i$, $i=1,2$, is a $(p+q)$-dimensional manifold, which will be denoted by $E_1\square E_2$. Its boundary is given by
$$\partial (E_1\square E_2)=\partial E_1\setminus \varphi _1(\overset \circ {D^q}\times S^{p-1})\bigcup _{S^{q-1}\times S^{p-1}}\partial E_2\setminus \varphi _2( \overset \circ {D^p}\times S^{q-1}),$$
where $\varphi _1(x,y)$ and $\varphi _2(y,x)$ are identified for $(x,y)\in S^{q-1}\times S^{p-1}$.

Note that the orientations on $E_1\square E_2$ induced by the inclusions $E_i\hookrightarrow E_1\square E_2$, $i=1,2$, agree if and only if $pq$ is even.
Note also that the oriented diffeomorphism type of $E_1\square E_2$ does not depend on the particular choice of trivializations, since orientation preserving coordinate functions are ambient isotopic.

The plumbing construction can be iterated and thereby generalized to a plumbing according to a connected graph $\Gamma$ where each vertex $\vvv$ of $\Gamma $ corresponds to a disk bundle $E_\vvv\to B_\vvv$ and two disk bundles are plumbed together for each edge connecting the corresponding vertices (if $p\neq q$ one needs to assume that the dimensions match up). The plumbing is a $(p+q)$-dimensional manifold with boundary, denoted by $\square _{\Gamma}E_\vvv$. If $\Gamma$ is a rooted tree, then the plumbing inherits a well-defined orientation from the inclusion of the disk bundle at the root. As before, the oriented diffeomorphism type of $\square _{\Gamma}E_\vvv$ does not depend on the particular choice of trivializations.

By shrinking the fibers in the disk bundles, one sees that $\square _{\Gamma}E_\vvv$ is homotopy equivalent to the space obtained by attaching each $B_\vvv$ to the corresponding vertex $\vvv$ of $\Gamma $. If $\Gamma $ is a tree, then $\square _{\Gamma}E_\vvv\simeq \bigvee _\vvv B_\vvv$. For example, if the plumbing is carried out according to a graph $\Gamma$ which is a straight line with $2l$ vertices, denoted by $1, \ldots , 2l$, then $\square _{\Gamma}E_\vvv=E_1 \square \ldots \square E_{2l}\simeq B_1\vee  \ldots \vee B_{2l}$.

{\bf In the following, we will always assume that $\boldsymbol{\Gamma }$ is a tree and that $\boldsymbol {p,q\geq 3}$.} We note that in this case the plumbing $\square _{\Gamma}E_\vvv$ and its boundary are both simply connected if all base manifolds $B_\vvv$ are spheres.

For an oriented real Riemannian vector bundle of rank $p$ over the sphere $S^q$ it will be useful to describe the associated disk bundle $E\to S^q$ in terms of a characteristic map $\phi :S^{q-1}\to SO(p)$.

We write $S^q$ as the union of the upper and lower hemisphere, $S^q=D^q_+\cup D^q_-$, and identify  $D^q_{\pm}$ with the unit disk $D^q$. Let $\varphi ^{\pm }: D^q\times D^p\hookrightarrow E$ be orientation preserving coordinate functions of $E_{\vert D^q_{\pm}}$ and let $\Phi:=(\varphi ^-)^{-1}\circ \varphi ^+_{\vert S^{q-1}\times D^p}: S^{q-1}\times D^p\to S^{q-1}\times D^p$. Then  the characteristic map $\phi :S^{q-1}\to SO(p)$ is defined by
$$\Phi (y,x)=(y, \phi (y)(x)).$$
Conversely, $E\to S^q$ can be obtained, up to isomorphism, from $\phi $ by identifying $D^q\times D^p$ with $D^q\times D^p$ along $S^{q-1}\times D^p$ via $\Phi$. The latter will be denoted by $D^q\times D^p\cup_\Phi D^q\times D^p$. Similarly, the sphere bundle $\partial E\to S^q$ is isomorphic to $D^q\times S^{p-1}\cup_\Phi D^q\times S^{p-1}$. Recall that the characteristic map is uniquely determined up to homotopy by the disk bundle and that isomorphism types of rank $p$ oriented linear disk bundles over $S^q$ are in one-to-one correspondence with elements in $\pi _{q-1}(SO(p))$ (see \cite{S51} for details).

Next consider the plumbing $E_1\square E_2$ of two disk bundles over spheres. Let $\varphi _1^{\pm }: D^q\times D^p\hookrightarrow E_1$ be orientation preserving coordinate functions as before, $\Phi_1:=(\varphi _1^-)^{-1}\circ \varphi _1^+: S^{q-1}\times D^p\to S^{q-1}\times D^p$,  $(x,y)\mapsto (x, \phi _1(x)(y))$, and identify $E_1$ with $D^q\times D^p\cup_{\Phi _1} D^q\times D^p$. Also let $\varphi _2^{\pm}: D^p\times D^q\hookrightarrow E_2$ be orientation preserving coordinate functions of ${E_2}_{\vert D^p_{\pm }}$, $\Phi_2:=(\varphi _2^-)^{-1}\circ {\varphi _2^+}_{\vert S^{p-1}\times D^q}$, and identify $E_2$ with $D^p\times D^q\cup_{\Phi _2} D^p\times D^q$.

Then the boundary of the plumbing \wrt \ $\varphi _1^-$ and $\varphi _2^+$ is
$$\partial (E_1\square E_2)=\partial E_1\setminus \varphi _1^-(\overset \circ {D^q}\times S^{p-1})\bigcup _{S^{q-1}\times S^{p-1}}\partial E_2\setminus \varphi _2^+(\overset \circ {D^p}\times S^{q-1}),$$
where $\varphi _1^-(x,y)$ is identified with $\varphi _2^+(y,x)$ for $(x,y)\in  S^{q-1}\times S^{p-1}$. Using the identifications above, the boundary $\partial (E_1\square E_2)$ can be described by
\begin{equation}\label{eq gluing 1}D^q\times S^{p-1}\bigcup _{\Phi _2\circ I\circ\Phi_1}D^p\times S^{q-1},\end{equation} 
where $(x,y)\in  S^{q-1}\times S^{p-1}$ is identified with $\Phi _2\circ I\circ\Phi_1(x,y)\in  S^{p-1}\times S^{q-1}$.

Note that if one of the bundles is trivial, say $E_1\to S^q$, then the diffeomorphism $D^q\times S^{p-1}\to D^q\times S^{p-1}$, $(x,y)\mapsto (\phi _2(y)(x),y)$, induces an identification of $\partial (E_1\square E_2)$ with $D^q\times S^{p-1}\bigcup _{ I}D^p\times S^{q-1}=S^{p+q-1}$.

Milnor \cite{M59,M60} showed that if one of the bundles, say $E_1\to S^q$, has a nowhere vanishing section, then $\partial (E_1\square E_2)$ admits a Morse function with two critical points and, hence, is a homotopy sphere. Note that $E_1\to S^q$ has a nowhere vanishing section if and only if the homotopy class of  its characteristic map $\phi _1$ is in the image of $\pi _{q-1}(SO(p-1))\to \pi _{q-1}(SO(p))$.

Milnor also showed that if one restricts to bundles which admit nowhere vanishing sections, then the plumbing construction yields a bilinear map
\begin{equation}\label{milnor pairing 1}\beta : \pi _{q-1}(SO(p-1))\otimes \pi _{p-1}(SO(q-1))\to \Theta _{p+q-1}, \quad (\theta _1, \theta _2) \mapsto \partial (E_1\square E_2).\end{equation}

Here, $E_i$ is the disk bundle with nowhere vanishing section defined by $\theta _i$ and $\Theta_{k}$ denotes the group of diffeomorphism classes of oriented $k$-dimensional homotopy spheres (for more information on the Milnor pairing $\beta $ see \cite{M60} and Section \ref{section 4k+3}).

For later reference, we point out the following generalization of (\ref{eq gluing 1}) to $2l$ disk bundles $E_i\to B_i$ over spheres with characteristic maps $\phi _i$.

\begin{lemma}\label{lemma plumbing over spheres}
Let $E_1 \square \ldots \square E_{2l}$ be the plumbing of disk bundles over spheres according to a straight line, $B_1=S^q,\ldots , B_{2l}=S^p$. We identify $E_{2i+1}$ with $D^q\times D^p\cup_{\Phi _{2i+1}} D^q\times D^p$ and $E_{2i}$ with $D^p\times D^q\cup_{\Phi _{2i}} D^p\times D^q$. Then
the boundary $\partial (E_1 \square \ldots \square E_{2l})$ is diffeomorphic to the manifold obtained by gluing $D^q\times S^{p-1}$ and $D^p\times S^{q-1}$ along their boundary via the diffeomorphism
\begin{equation}\label{eq gluing 2}\Phi_{2l}\circ I\circ \Phi_{2l-1}\circ \ldots \circ I\circ \Phi _2\circ I\circ\Phi_1 :S^{q-1}\times S^{p-1}\to S^{p-1}\times S^{q-1}.\end{equation}\qed 
\end{lemma}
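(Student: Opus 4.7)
The plan is to proceed by induction on $l$, with the base case $l=1$ supplied by equation~(\ref{eq gluing 1}). The key structural observation is that whenever an intermediate bundle $E_i$ is plumbed to neighbors on both sides, both of its trivializing coordinate functions $\varphi_i^{+}$ and $\varphi_i^{-}$ are consumed by the gluings, so $E_i$ contributes nothing to the boundary of the total plumbing except along corner strata; only the endpoints $E_1$ and $E_{2l}$ retain a full hemisphere in the final boundary.

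To make this precise I fix the convention that along the straight line the plumbing of $E_i$ with $E_{i+1}$ uses $\varphi_i^{-}$ on the $E_i$-side and $\varphi_{i+1}^{+}$ on the $E_{i+1}$-side, so that after all $2l$ bundles are plumbed the only ``free'' coordinate functions are $\varphi_1^{+}$ and $\varphi_{2l}^{-}$. The inductive hypothesis then reads: $\partial(E_1\square\ldots\square E_{2l})$ is diffeomorphic to $D^q\times S^{p-1}\cup_{\Psi_l}D^p\times S^{q-1}$, where the two pieces are parameterized by $\varphi_1^{+}$ and $\varphi_{2l}^{-}$ respectively, and $\Psi_l$ is the composition in the statement.

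For the inductive step I will adjoin $E_{2l+1}$ and $E_{2l+2}$ in succession. Attaching $E_{2l+1}$ via the cross identification $\varphi_{2l}^{-}(a,b)\sim\varphi_{2l+1}^{+}(b,a)$ absorbs the entire $\varphi_{2l}^{-}(D^p\times S^{q-1})$-piece and swaps in its place the complementary hemisphere $\varphi_{2l+1}^{-}(D^q\times S^{p-1})$ of $\partial E_{2l+1}$. Chasing a corner point $(x,y)\in S^{q-1}\times S^{p-1}$ through $\Psi_l$, then through the swap $I$, and finally through $\Phi_{2l+1}$, I find that the updated gluing is $\Phi_{2l+1}\circ I\circ\Psi_l$. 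Plumbing $E_{2l+2}$ to this via $\varphi_{2l+1}^{-}$ and $\varphi_{2l+2}^{+}$ then absorbs this intermediate piece and installs $\varphi_{2l+2}^{-}(D^p\times S^{q-1})$, producing the new gluing $\Psi_{l+1}=\Phi_{2l+2}\circ I\circ\Phi_{2l+1}\circ I\circ\Psi_l$, which closes the induction.

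The hard part will be the bookkeeping: at each stage one must correctly identify which hemisphere of $\partial E_i$ survives and in which local coordinate system, and then compose the characteristic maps $\Phi_i$, their inverses, and the swap $I$ in the right order when converting between the parameterizations across a plumbing. The customary corner smoothing at each plumbing is routine and does not affect the diffeomorphism type of the boundary.
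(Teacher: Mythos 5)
The paper provides no proof for this lemma (it ends with an immediate \qed after its statement), presenting it as an iterated application of the two-bundle case, equation~(\ref{eq gluing 1}); your inductive argument supplies exactly that expected chain of reasoning, so the approach is essentially the paper's. Your bookkeeping is correct: the inductive identification of $\partial(E_1\square\ldots\square E_{2l})$ as $D^q\times S^{p-1}\cup_{\Psi_l}D^p\times S^{q-1}$ parameterized by $\varphi_1^+$ and $\varphi_{2l}^-$, and the two-step update $\Psi_l\mapsto\Phi_{2l+1}\circ I\circ\Psi_l\mapsto\Phi_{2l+2}\circ I\circ\Phi_{2l+1}\circ I\circ\Psi_l$, is exactly what happens when you trace a corner point through the coordinate changes at each cross-identification.

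One technicality you gloss over deserves a sentence in a polished write-up: the two hemisphere coordinate functions $\varphi_i^+$ and $\varphi_i^-$ of an intermediate bundle $E_i$ overlap along the equator, so strictly speaking one cannot plumb at both simultaneously. This is standard and harmless: one actually plumbs at small disjoint coordinate functions (e.g.\ near the two poles of $B_i$), and since orientation-preserving coordinate functions are ambient isotopic (as the paper recalls at the start of Section~\ref{subsection plumbing disk bundles}), the diffeomorphism type of the plumbing and the isotopy class of the resulting gluing of boundary pieces are unchanged; the hemisphere coordinates are then the convenient representatives for reading off the gluing map as $\Phi_i$. Pointing this out would close the loop on ``in which local coordinate system'' in your bookkeeping remark. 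With that caveat addressed, the proposal is a complete and correct proof.
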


\subsection{Generalized plumbing}\label{subsection generalized plumbing}
For later use, we extend the plumbing construction to a more general setting. Let $V^{p+q}$ be a smooth manifold with boundary. A map $\psi: D^p\times D^q\to V$ will be called a {\em neat embedding} if $\psi$ extends to a neat embedding $\overset \circ {D_r^p}\times D^q\hookrightarrow V$ for some $r>1$.\footnote{Recall that an embedding of a manifold into $V$ with image $A^a\subset V$ is neat if $\partial A=A\cap \partial V$ and $A$ is covered by charts $(\phi,U)$ of $V$ such that $A\cap U=\phi ^{-1}(\R ^a)$. In particular, $A$ intersects $\partial V$ transversally.} Here, $\overset \circ {D_r^p}\subset \R ^p$ denotes the open disk of radius $r$.

Let $\partial \psi:= \psi _{\vert D^p\times S^{q-1}}$. Note that $\psi$ maps $D^p\times \overset \circ {D^q}$ into the interior of $V$ and $\partial \psi$ maps $D^p\times S^{q-1}$ into the boundary (i.e., $D^p\times S^{q-1}=\psi ^{-1}(\partial V)$).

Suppose that $\psi: D^p\times D^q\hookrightarrow V$ is as above, $Q^{p+q}$ is another smooth manifold with boundary, and $\zeta : D^q\times D^p\hookrightarrow Q$ is a neat embedding. Then one can define the {\em generalized plumbing} $Q\square V$ \wrt \ $(\psi, \zeta)$ by taking the disjoint union $Q\coprod V$, identifying $\zeta (y,x)$ with $\psi (x,y)$, $(x,y)\in D^p \times D^q$, and straightening out the angles.

The boundary $\partial (Q\square V)$ of the generalized plumbing is obtained by gluing $\partial Q \setminus\zeta(\overset \circ {D^q}\times S^{p-1})$ and\linebreak $\partial V \setminus \psi (\overset \circ {D^p}\times S^{q-1})$ along their boundary via $\psi \circ I\circ \zeta^{-1}$, 
$$\partial (Q\square V)=\partial Q \setminus\zeta(\overset \circ {D^q}\times S^{p-1}) \bigcup_{\psi \circ I\circ \zeta^{-1}}\partial V \setminus \psi (\overset \circ {D^p}\times S^{q-1}).$$

Note that the diffeomorphism type of $\partial (Q\square V)$ does not change if $\partial \zeta:=\zeta_{\vert D^q\times S^{p-1}}:D^q\times S^{p-1}\hookrightarrow \partial Q$ is moved within its isotopy class.

Next, we discuss a way to construct the connected sum via plumbing (see \cite[Prop. 2.6]{CW17}, \cite[Prop. 3.3]{R24} and references therein for generalizations).

Let $\psi: D^p\times D^{q}\hookrightarrow V$ be a neat embedding as before and let $j: D^q\times S^{p-1}\hookrightarrow S^{p+q-1}\sharp \partial Q $ be induced by the inclusion $D^q\times S^{p-1}\hookrightarrow (D^q\times S^{p-1}\cup _I D^p\times S^{q-1})=S^{p+q-1}$ into the first component. We will assume that the connected sum is taken \wrt \ an open ball $U$ in the interior of $D^p\times S^{q-1}$ and an open ball $U^\prime \subset \partial Q$. In this situation,
$$(S^{p+q-1}\sharp \partial Q )\setminus j(\overset \circ {D^q}\times S^{p-1}) \bigcup_{\partial \psi \circ I\circ  j^{-1}} \partial V \setminus \psi (\overset \circ {D^p}\times S^{q-1}) =  (D^p\times S^{q-1})\sharp \partial Q  \; \cup \;  \partial V \setminus \psi (\overset \circ {D^p}\times S^{q-1})$$
$$=\partial Q \setminus U^\prime \; \cup \; (D^p\times S^{q-1})\setminus U \; \cup \; \partial V \setminus \psi (\overset \circ {D^p}\times S^{q-1}) =\partial Q \setminus U^\prime\; \cup  \; \partial V \setminus \psi (U)=\partial Q\sharp \partial V.$$

\begin{lemma}\label{lemma 1 connected sum}
Let $\psi$ and $\zeta$ be neat embeddings as above. Assume that $\partial \zeta:D^q\times S^{p-1}\hookrightarrow \partial Q\overset \cong \to S^{p+q-1}\sharp \partial Q $ is isotopic to the embedding $j: D^q\times S^{p-1}\hookrightarrow S^{p+q-1}\sharp \partial Q $. Then $\partial (Q \square V)$ is diffeomorphic to $\partial Q\sharp \partial V$ by a diffeomorphism which is the identity on $\partial V\setminus \psi (D^p\times S^{q-1})$.
\end{lemma}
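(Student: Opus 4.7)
The plan is to reduce the statement to the explicit computation carried out immediately before the lemma. That computation shows that if one glues $(S^{p+q-1}\sharp\partial Q)\setminus j(\overset{\circ}{D^q}\times S^{p-1})$ and $\partial V\setminus\psi(\overset{\circ}{D^p}\times S^{q-1})$ along their common boundary via $\partial\psi\circ I\circ j^{-1}$, the result is diffeomorphic to $\partial Q\sharp\partial V$, with the $\partial V$-side untouched. So the task reduces to replacing the embedding $\partial\zeta$ by the model embedding $j$ without disturbing the $V$-side of the gluing.

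First, I fix the identification $\partial Q\cong S^{p+q-1}\sharp\partial Q$ provided by the hypothesis, so that $\partial\zeta$ and $j$ become two isotopic neat embeddings $D^q\times S^{p-1}\hookrightarrow\partial Q$. By the isotopy extension theorem, the given isotopy extends to an ambient isotopy $H_t\colon\partial Q\to\partial Q$ with $H_0=\mathrm{id}$ and $H_1\circ\partial\zeta=j$. In particular, $H_1$ carries $\zeta(\overset{\circ}{D^q}\times S^{p-1})$ onto $j(\overset{\circ}{D^q}\times S^{p-1})$, so it restricts to a diffeomorphism
\[
H_1\colon\partial Q\setminus\zeta(\overset{\circ}{D^q}\times S^{p-1})\;\xrightarrow{\cong}\;\partial Q\setminus j(\overset{\circ}{D^q}\times S^{p-1}).
\]

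Next I define a candidate diffeomorphism $F\colon\partial(Q\square V)\to\partial Q\sharp\partial V$ piecewise: on the $\partial V$-piece $\partial V\setminus\psi(\overset{\circ}{D^p}\times S^{q-1})$ I take $F$ to be the identity (composed with the identification supplied by the pre-lemma computation), and on the $\partial Q$-piece $\partial Q\setminus\zeta(\overset{\circ}{D^q}\times S^{p-1})$ I take $F=H_1$. Compatibility on the overlap $S^{q-1}\times S^{p-1}$ is the key check: a boundary point $\zeta(y,x)$ of the $Q$-side is glued in $\partial(Q\square V)$ to $\psi(x,y)$ on the $V$-side via $\psi\circ I\circ\zeta^{-1}$, while under $F$ the image point $H_1(\zeta(y,x))=j(y,x)$ is glued in the target to $\psi(x,y)$ via $\partial\psi\circ I\circ j^{-1}$; since both gluings send $(y,x)\mapsto\psi(x,y)$, the map $F$ is well-defined across the seam, and smoothness near the corners follows from straightening the angles exactly as in the definition of the plumbing. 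The explicit computation preceding the lemma then identifies the target with $\partial Q\sharp\partial V$, completing the proof. By construction $F$ is the identity on $\partial V\setminus\psi(D^p\times S^{q-1})$, as required.

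The only subtle point I anticipate is ensuring that the isotopy extension can be carried out so that $H_1$ interacts well with the corner-straightening near $\partial\zeta(D^q\times S^{p-1})$, and that the connected-sum identification $\partial Q\cong S^{p+q-1}\sharp\partial Q$ is made using a ball disjoint from $\zeta(D^q\times D^p)$; both are standard and cause no real difficulty, since $\zeta$ is a neat embedding of a compact set in $Q$ and there is room to perform the connected sum away from it.
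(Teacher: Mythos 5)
Your argument is correct and follows the same route as the paper's proof: move $\partial\zeta$ to $j$ by an ambient isotopy (via the isotopy extension theorem) and then invoke the computation immediately preceding the lemma to identify the resulting boundary with $\partial Q\sharp\partial V$, with the $\partial V$-side untouched. You have simply spelled out the seam-compatibility check and the piecewise definition of the diffeomorphism, which the paper leaves implicit.
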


\noindent
\begin{proof} 
To construct the diffeomorphism one first moves $\partial\zeta $ to $j$ via an ambient isotopy and then identifies $(S^{p+q-1}\sharp \partial Q )\setminus j(\overset \circ {D^q}\times S^{p-1}) \bigcup_{\psi \circ I\circ  j^{-1}} \partial V \setminus \psi (\overset \circ {D^p}\times S^{q-1})  $ with $\partial Q\sharp \partial V$ as indicated above.
\end{proof}

\begin{lemma}\label{lemma 2 connected sum} Let $V_i$ be a manifold with neat embedding $\psi_i: D^p\times D^q\hookrightarrow V_i$, $i=1,2$. Let $\cal D$ be the plumbing $(S^q\times D^p)\square (S^p\times D^q)$ of trivial disk bundles and let $\varphi _i: D^q\times D^p\hookrightarrow S^q\times D^p\subset {\cal D}$, $i=1,2$, be disjoint coordinate functions of $S^q\times D^p\to S^q$. Consider the plumbing ${\cal P}$ obtained by plumbing $V_1$ and $V_2$ to $\cal D$ \wrt \ $(\psi _1,\varphi _1)$ and  $(\psi _2,\varphi _2)$. Then $\partial {\cal P}\cong \partial V_1\sharp \partial V_2$. Moreover, the diffeomorphism can be chosen to be the identity on $\partial V_i\setminus \psi _i(D^p\times S^{q-1})$, $i=1,2$.
\end{lemma}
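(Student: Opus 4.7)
The plan is to apply Lemma~\ref{lemma 1 connected sum} twice, once for each $V_i$. First, I would identify $\partial {\cal D}$ with $S^{p+q-1}$ using the description in Section~\ref{subsection plumbing disk bundles}: since both disk bundles defining ${\cal D} = (S^q \times D^p) \square (S^p \times D^q)$ are trivial, $\partial {\cal D}$ decomposes as $D^q \times S^{p-1} \cup_I D^p \times S^{q-1} = S^{p+q-1}$, with the first factor corresponding to the retract of the $S^q \times S^{p-1}$ boundary surviving after plumbing. I would view ${\cal P}$ as the iterated plumbing $({\cal D} \square V_1) \square V_2$, performing the first plumbing \wrt \ $(\psi_1, \varphi_1)$ and the second \wrt \ $(\psi_2, \varphi_2)$.

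To verify the hypothesis of Lemma~\ref{lemma 1 connected sum} for the first step, I note that $\varphi_1$ is a coordinate function of the trivial bundle $S^q \times D^p \to S^q$, so $\varphi_1(x,y) = (\overline \varphi_1(x), y)$ for some orientation-preserving embedding $\overline \varphi_1 : D^q \hookrightarrow S^q$. The restriction $\partial \varphi_1$ thus maps $D^q \times S^{p-1}$ onto $\overline \varphi_1(D^q) \times S^{p-1}$ inside the $D^q \times S^{p-1}$ half of $\partial {\cal D}$. The core $\overline \varphi_1(\{0\}) \times S^{p-1}$ can be ambient-isotoped in $\partial {\cal D}$ to the standard core $\{0\} \times S^{p-1}$ via an isotopy of $D^q$ sending $\overline \varphi_1(0)$ to $0$, and uniqueness of tubular neighborhoods with matching trivial normal framings then shows $\partial \varphi_1$ is ambient isotopic to $j$. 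Lemma~\ref{lemma 1 connected sum} gives a diffeomorphism $\partial({\cal D} \square V_1) \cong \partial {\cal D} \sharp \partial V_1 \cong \partial V_1$ that is the identity on $\partial V_1 \setminus \psi_1(D^p \times S^{q-1})$.

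For the second plumbing, I would exploit the disjointness of $\varphi_1$ and $\varphi_2$: the ambient isotopy constructed above can be chosen with support in a neighborhood of $\varphi_1(D^q \times D^p)$ disjoint from $\varphi_2(D^q \times D^p)$. Hence $\partial \varphi_2$ is unaltered in passing to $\partial({\cal D} \square V_1)$, and the same argument (applied to the undisturbed region surrounding it) shows $\partial \varphi_2$ is isotopic to a standard inclusion $j$. A second application of Lemma~\ref{lemma 1 connected sum} then yields $\partial {\cal P} \cong \partial({\cal D} \square V_1) \sharp \partial V_2 \cong \partial V_1 \sharp \partial V_2$, which is the identity on $\partial V_2 \setminus \psi_2(D^p \times S^{q-1})$. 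By the disjointness of supports, the composite diffeomorphism is still the identity on $\partial V_1 \setminus \psi_1(D^p \times S^{q-1})$ as well.

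The main obstacle is the careful control of supports of the ambient isotopies so that the two applications of Lemma~\ref{lemma 1 connected sum} do not interfere with each other's identity-on-complement properties. The disjointness of $\varphi_1$ and $\varphi_2$ in $S^q \times D^p$ is the essential ingredient, ensuring that the boundary modifications in the two steps occur in disjoint regions of $\partial {\cal D}$.
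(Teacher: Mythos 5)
Your proof takes essentially the same route as the paper: both rely on viewing $\partial \cal D = S^{p+q-1}$ as a connected sum so that each $\partial\varphi_i$ can be put into the standard position $j$, and then invoking the argument of Lemma \ref{lemma 1 connected sum}. The paper does this in one shot by writing $\partial{\cal D}\cong\partial{\cal D}_1\sharp\partial{\cal D}_2$ (connected sum taken in the $D^p\times S^{q-1}$ halves) and observing that $\partial\varphi_1,\partial\varphi_2$ are simultaneously isotopic to the two standard inclusions $j_1,j_2$; you instead iterate Lemma \ref{lemma 1 connected sum} twice, once per $V_i$.

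One step in your writeup is stated imprecisely and should be repaired. You claim the ambient isotopy moving $\partial\varphi_1$ to $j$ "can be chosen with support in a neighborhood of $\varphi_1(D^q\times D^p)$ disjoint from $\varphi_2(D^q\times D^p)$." But under the identification $\partial{\cal D}\cong S^{p+q-1}\sharp\partial{\cal D}$ used in Lemma \ref{lemma 1 connected sum}, the image of $j$ is the entire $D^q\times S^{p-1}$ half of the summand $S^{p+q-1}$, whereas $\partial\varphi_1(D^q\times S^{p-1})$ is a small product block inside that half. Inflating the small block to the whole half cannot be done by an isotopy supported away from $\varphi_2$. What is actually true — and suffices — is that the isotopy sweeps $\partial\varphi_2$ into the complementary region $S^{p+q-1}\setminus j(\overset\circ{D^q}\times S^{p-1})$, which is a disk; since this disk survives unchanged into $\partial({\cal D}\square V_1)$, the composite $\partial\varphi_2\hookrightarrow\partial({\cal D}\square V_1)$ is contained in an embedded disk and hence still isotopic to the standard inclusion $j$ for the second application of Lemma \ref{lemma 1 connected sum}. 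Equivalently, you may emulate the paper more directly: first decompose $\partial{\cal D}$ as a connected sum of two spheres so that the two $\varphi_i$ sit in separate summands, and then the two isotopies genuinely have disjoint supports. Either fix restores the argument; as written, the support claim is not correct.
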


\noindent
\begin{proof} We identify $\partial {\cal D}=S^{p+q-1}$ with the connected sum of two copies, $\partial {\cal D}\cong \partial{\cal D}_1\sharp \partial{\cal D}_2$ (the connected sum will be taken \wrt \ open balls in the second component of $\partial{\cal D}_i=D^q\times S^{p-1}\cup _I D^p\times S^{q-1}$, $i=1,2$). Note that the embeddings $\partial \varphi_i:D^q\times S^{p-1}\hookrightarrow \partial {\cal D}\cong \partial{\cal D}_1\sharp \partial{\cal D}_2$, $i=1,2$, are isotopic to the embeddings $j_i:D^q\times S^{p-1}\hookrightarrow  \partial{\cal D}_1\sharp \partial{\cal D}_2$ induced by the inclusion $D^q\times S^{p-1}\hookrightarrow D^q\times S^{p-1}\cup _I D^p\times S^{q-1}=\partial{\cal D}_i$.

Arguing as in Lemma  \ref{lemma 1 connected sum}, it follows that there is a diffeomorphism $\partial {\cal P}\overset \cong \to 
%\partial V_1\sharp \partial{\cal D}_1\sharp \partial{\cal D}_2\sharp \partial V_2\cong 
\partial V_1\sharp \partial V_2$ which is the identity on $\partial V_i\setminus \psi _i(D^p\times S^{q-1})$, $i=1,2$.
\end{proof}

\section{Geometric plumbing}\label{section geometric plumbing}

In this section we construct metrics of positive scalar curvature ($scal >0$) on certain generalized plumbings. These metrics are of product type near the boundary and restrict to metrics of positive Ricci curvature ($Ric >0$)  on the boundary. Metrics on plumbings with these properties (up to a deformation within $scal >0$) were first constructed by Wraith \cite{W11}. We will follow a slightly different approach due to Reiser \cite{R23} and adapt it to the generalized plumbings considered in this paper (see Theorem \ref{plumbing theorem}).

\subsection{Preliminaries}\label{preliminaries}

We first fix some notations. Let $ds^2_p$ denote the metric of the round unit $p$-dimensional sphere and let $S^p(r)$ denote the round sphere $(S^p, r^2ds^2_p)$ of radius $r>0$.

The closed geodesic ball of radius $R\in(0,\pi  r)$ in $S^p(r)$ will be denoted by $D^p_R(r)$. The metric of $D^p_R(r)$ can be described by the warped product metric $dt^2+r^2\sin ^2 (t/r) ds^2_{p-1}$, $t\in [0,R]$. Note that $\partial  D^p_R(r)= S^{p-1}(r \sin (R/r))$. If $R/r$ is smaller than $\pi /2$, then $D^p_R(r)$ is a spherical cap properly contained in a hemisphere and has convex boundary.

The geodesic ball $D^p_R(r)$ will also be denoted by $B_\epsilon ^p(\rho)$, where $\epsilon := R/r\in (0,\pi )$ and $\rho :=r \sin (R/r)$ is the radius of the sphere $\partial  D^p_R(r)$. Equivalently, $B_\epsilon ^p(\rho)$ is equal to $D^p_R(r)$ for $r=\rho/\sin (\epsilon)$ and $R=\epsilon \rho /\sin (\epsilon)$.

Next, we recall the second fundamental form $\mathrm{I\!I}$
of the boundary of a compact $(n+1)$-dimensional Riemannian manifold $(V,g_V )$. At a boundary point $p\in M:=\partial V$, the form $\mathrm{I\!I}_p$ is defined by
\begin{equation}\label{formula 2nd}\mathrm{I\!I}_p: T_pM\times T_pM\to \R,\quad (X,Y)\mapsto -g_V(\nabla  _{\underline{X}} \nu,\underline {Y}) _p.\end{equation}
Here, $\underline{X},\underline{Y}$ denote extensions of $X,Y$ to vector fields on a neighborhood of $p\in V$ which restrict to vector fields on the boundary, $\nu$ is the unit {\em inward} pointing normal vector field, and $\nabla $ is the Levi-Civita connection of $V$. The expression $-g_V(\nabla  _{\underline{X}} \nu,\underline {Y})= g_V( \nu,\nabla  _{\underline{X}} \underline {Y} )$ is symmetric in $\underline{X},\underline{Y}$ and independent of the chosen extensions. By the Koszul formula, $\mathrm{I\!I}$ at $p$ is given by
\begin{equation}\label{koszul}\mathrm{I\!I}(X,Y)=-\frac 1 2 \left(\nu g_V(\underline{X},\underline{Y})-g_V([\nu ,\underline{X}],\underline{Y})-g_V([\nu ,\underline{Y}],\underline{X}) \right).\end{equation}

The second fundamental form $\mathrm{I\!I}_p$ is symmetric and bilinear and can be represented by a diagonal $(n\times n)$ matrix $\hat{\mathrm{I\!I}}_p$ \wrt \ a suitable orthonormal basis of $T_pM$. The diagonal entries of $\hat {\mathrm{I\!I}}_p$ are the principal curvatures at $p$. Hence, $\mathrm{I\!I}_p $ is positive definite if and only if all principal curvatures are positive. If this is the case for all $p\in M$, then, using the first variation formula, it follows that $V$ has convex boundary in the sense that every two points of $V$ can be joined by a minimal geodesic whose interior lies in the interior of $V$. If, in addition, $V$ has positive Ricci curvature, then by Myers' argument the fundamental group of $V$ is finite.

Note that for the rescaled metric $\lambda ^2 g_V$, $\lambda >0$, the second fundamental form at $p$ is represented by $\frac 1 \lambda \hat {\mathrm{I\!I}}_p$.
The second fundamental form has a nice explicit description in terms of a collar of $V$. Suppose $(-\epsilon,0]\times M\hookrightarrow V$ is a collar on which $g_V$ takes the form $dt^2 +g(t)$ for a smooth family $g(t)$, $t\in  (-\epsilon,0]$, of metrics on $M$. Then $\mathrm{I\!I}$ is equal to $\frac 1 2 g^\prime (0)$. Note that if $\hat {\mathrm{I\!I}}$ represents $\mathrm{I\!I}$ at $(0,x)\in (-\epsilon,0]\times M$ and $f: (-\epsilon,0]\to \R $ is smooth, then $\frac {f^\prime(0)}{f(0)}\mathrm{I}_n+\hat {\mathrm{I\!I}}$ represents the second fundamental form of $M$ at this point for the metric $dt^2 +f^2(t)g(t)$ (where $\mathrm{I}_n$ denotes the $(n\times n)$ identity matrix). For example, the second fundamental form of the boundary of $D^p_R(r)$ is represented by $\frac 1 r \cot (\frac R r)ds^2_{p-1}$.
In particular, the second fundamental form of the boundary of a geodesic ball $B_\epsilon ^p(\rho)$ which is properly contained in a hemisphere (i.e., $\epsilon <\pi /2$) is positive definite.

The construction of $Ric>0$ metrics in Section \ref{generalized geometric plumbing} will rely (as in \cite{B19,R24}) on the following gluing result of Perelman. Let $V_1$ and $V_2$ be $(n+1)$-dimensional Riemannian manifolds and let  $M_1\subset V_1$ and $M_2\subset V_2$ be isometric boundary components with second fundamental forms $\mathrm{I\!I}_{M_1}$ and $\mathrm{I\!I}_{M_2}$, respectively.
\begin{theorem}[\cite{P97}]\label{Perelman gluing} Suppose $V_1$ and $V_2$ are of positive Ricci curvature, $\phi :M_1\to M_2$ is an isometry, and $\mathrm{I\!I}_{M_1}+\phi^*(\mathrm{I\!I}_{M_2})$ is positive semi-definite. Then $V_1\cup_\phi V_2$ admits a $Ric>0$ metric which is equal to the given metrics on $V_1$ and $V_2$ away from a small neighborhood of the boundary components.\qed
\end{theorem}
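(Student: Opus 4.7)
The plan is to localize the problem to a small collar of the gluing hypersurface, deform the two collar metrics so they fit together smoothly, and exploit strict positivity of $\mathrm{Ric}(g_{V_i})$ away from the collar to absorb any negative contributions introduced by the deformation. Fix Fermi coordinates in collars $(-\delta,0]\times M_i \hookrightarrow V_i$ so that $g_{V_i}=dt^2 + h_i(t)$, with $t$ the signed inward distance, and use $\phi$ to identify both boundaries with a single $(M,g_0)$. Then $\mathrm{I\!I}_{M_i} = \tfrac12 h_i'(0)$ (with the inward-normal convention used in the excerpt), so the hypothesis becomes $h_1'(0) + \phi^* h_2'(0) \ge 0$ as a symmetric $2$-tensor on $M$.

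After naive gluing, $V_1 \cup_\phi V_2$ carries a $C^0$ metric whose obstruction to being $C^1$ across $M$ is exactly $h_1'(0) + \phi^* h_2'(0)$. My approach would be to kill this jump by a reparametrization trick: replace each $h_i(t)$ on a short subcollar $[0,\varepsilon_i]$ by $h_i(\psi_i(t))$, where $\psi_i\colon[0,\varepsilon_i]\to[0,\varepsilon_i]$ is a smooth increasing function that equals the identity to infinite order at $t=\varepsilon_i$ (so the modified metric patches smoothly back to the unchanged $g_{V_i}$ outside the collar) and satisfies $\psi_i'(0)\in[0,1]$ chosen so that $\psi_1'(0)\, h_1'(0) + \psi_2'(0)\, \phi^* h_2'(0)=0$. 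The non-negativity of $h_1'(0) + \phi^* h_2'(0)$ is exactly what makes such a choice of $\psi_i'(0)$ possible. The bent metric is then $C^1$ across $M$; a final mollification in the $t$-variable, done uniformly enough to preserve open Ricci conditions, promotes it to smooth.

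The main obstacle is verifying that the bent metric still satisfies $\mathrm{Ric}>0$. The Ricci tensor of $dt^2 + h(\psi(t))$ splits into tangential, normal, and mixed components. The tangential part stays close to that of $g_{V_i}$ and, for $\varepsilon_i$ small, remains bounded below by some $\kappa/2$ coming from the strict bound $\mathrm{Ric}(g_{V_i})\ge\kappa>0$ on a compact neighborhood of $M_i$. The serious issue is the normal and mixed components, which pick up terms involving $\psi_i''$; since $\psi_i'$ must change by a definite amount over a collar of width $\varepsilon_i$, $\psi_i''$ will typically be of order $1/\varepsilon_i$ and hence pointwise large. The delicate point, and where I expect most of the work to go, is to choose $\psi_i$ (the natural candidate being concave with $\psi_i''\le 0$) and $\varepsilon_i$ so that the sign of the $\psi_i''$-contribution actually \emph{helps} rather than hurts the relevant Ricci components, exploiting that in the warped-type expression for $\mathrm{Ric}(\partial_t,\partial_t)$ a term of the form $-\psi''/\psi'$ enters with a sign dictated by the inward-normal convention. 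Making this estimate quantitative and compatible with the final mollification is the analytical core of Perelman's argument.
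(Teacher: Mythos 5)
The paper does not prove this theorem; it is stated as a citation to Perelman \cite{P97}, and the subsequent paragraph credits Reiser \cite[p.~3402]{R23} for the weakening from positive definite to positive semi-definite. So there is no internal proof to compare against; the relevant comparison is with the arguments in those sources.

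Your bending step contains a genuine gap, visible already in the choice of $\psi_i'(0)$. The condition $\psi_1'(0)\,h_1'(0)+\psi_2'(0)\,\phi^*h_2'(0)=0$ is an identity of symmetric $2$-tensor fields on $M$ with two \emph{scalar} unknowns. Unless $h_1'(0)$ and $\phi^*h_2'(0)$ are pointwise proportional with a globally constant ratio, the only solution in $[0,1]^2$ is $\psi_1'(0)=\psi_2'(0)=0$, i.e.\ bend both boundaries to totally geodesic. That choice is available for \emph{any} pair of metrics, so your assertion that the non-negativity of $h_1'(0)+\phi^*h_2'(0)$ is ``exactly what makes such a choice of $\psi_i'(0)$ possible'' is not correct; the hypothesis is simply not used at this point. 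And it must be used somewhere, since the theorem is false without it. In fact the bending itself destroys positivity: in a warped-product model $dt^2+f(\psi(t))^2\,ds^2_{p-1}$ with $\psi(0)=\psi'(0)=0$ one has $\mathrm{Ric}(\partial_t,\partial_t)\vert_{t=0}=-(p-1)f'(0)\psi''(0)/f(0)$, so positivity forces $\operatorname{sign}\psi''(0)=-\operatorname{sign}f'(0)$. With $t$ the inward distance one has $\mathrm{I\!I}_{M_i}=-\tfrac12 h_i'(0)$ (note the sign you wrote is off), so a concave direction of $\mathrm{I\!I}_{M_i}$ means $f'(0)>0$, which forces $\psi''(0)<0$; but $\psi'(0)=0$, $\psi''(0)<0$ and $\psi$ increasing on $[0,\varepsilon]$ are incompatible. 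Since $\mathrm{I\!I}_{M_1}+\phi^*\mathrm{I\!I}_{M_2}\ge 0$ permits both $\mathrm{I\!I}_{M_i}$ to be indefinite, your method fails on precisely the cases the theorem is meant to cover.

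The missing idea is that one should \emph{not} try to make the glued metric $C^1$ by a one-dimensional reparametrization. The argument in \cite{P97,R23} smooths the naively glued $C^0$ metric $ds^2+H(s)$ directly, and the hypothesis enters through the jump in the first normal derivative: $H'(0^+)-H'(0^-)=-2(\mathrm{I\!I}_{M_1}+\phi^*\mathrm{I\!I}_{M_2})\le 0$. Any smoothing of $H$ then produces a second normal derivative with a non-positive (tensorial) spike at $s=0$, and in the Riccati/Gauss expressions for $\mathrm{Ric}(\partial_s,\partial_s)$ and $\mathrm{Ric}(X,Y)$ the $H''$-terms enter with signs that turn this spike into a non-negative contribution to every Ricci component, while the $H'$-dependent and mixed (Codazzi) terms remain bounded. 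Combined with the strict bound $\mathrm{Ric}(g_{V_i})>0$ up to and including the boundary, this yields $\mathrm{Ric}>0$ for the smoothed metric. In short, the semi-definiteness hypothesis must be exploited \emph{distributionally} in the smoothing, not as a constraint one can satisfy by a $C^1$ pre-matching.
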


If $V_1$ is isometric to $(M\times (-\epsilon ,0],dt^2 + \kappa _-(t)g_M)$ near $M_1$, $V_2$ is isometric to $(M\times ( 0,\epsilon], dt^2 + \kappa _+(t)g_M)$ near $M_2$, and $\kappa _-(0)=\kappa _+(0)$, then the condition on the second fundamental forms is satisfied if $\kappa ^\prime_-(0)\geq   \kappa ^\prime_+(0)$. 
For example, the $C^0$-metric, obtained by identifying the boundary of two geodesic balls $B^p_{\epsilon_1}(\rho)$ and $B^p_{\epsilon_2}(\rho)$, can be smoothed out to yield a $Ric>0$ metric on $B^p_{\epsilon_1}(\rho)\cup _{S^{p-1}(\rho)}B^p_{\epsilon_2}(\rho)$ if $\cos (\epsilon _1)+\cos (\epsilon _2)\geq 0$. Moreover, the metric can be chosen to agree with the given metric outside of a small neighborhood of $S^{p-1}(\rho)$.

In \cite{P97} it is assumed that the sum $\mathrm{I\!I}_{M_1}+\phi^*(\mathrm{I\!I}_{M_2})$ is positive definite. However, the conclusion still holds under the weaker assumption above (see, for example, \cite[p. 3402]{R23}).

\begin{remark}\label{GL mean curvature remark}
If $V_1$, $V_2$ are of positive scalar curvature and the sum $\frac 1 n\tr (\mathrm{I\!I}_{M_1})+\frac 1 n\tr (\phi^*(\mathrm{I\!I}_{M_2}))$ of mean curvatures is nonnegative, then the metrics extend in the above sense to a $scal>0$ metric on $V_1\cup_\phi V_2$ (see \cite[\S 5]{GL80I}, \cite[p. 705]{G18}).
\end{remark}

Next we recall some relevant constructions of metrics of positive Ricci curvature. Let $G$ be a compact Lie group, $P\to B$ a principal $G$-bundle, $F$ a compact manifold with $G$-action, and $P\times _G F\to B$ the associated bundle. Let $g_B$ be a metric on $B$, $g_F$ a $G$-invariant metric on $F$, and let $P$ be equipped with a principal connection. Then, by Vilms' theorem \cite[Thm. 9.59]{B87}, there exists a unique metric $g$ on $P\times _G F$ such that  $P\times _G F\to B$ is a Riemannian submersion with totally geodesic fibers isometric to $(F,g_F)$ and horizontal distribution associated to the connection. The canonical variation $g_t$, $t>0$, is obtained by replacing $g$ by $t g$ in vertical direction. For $t<1$ this corresponds to shrinking the fibers of $P\times _G F \to B$. Suppose $B$ is compact. Then one has (see \cite[Prop. 9.70]{B87}):
\begin{proposition}\label{Proposition shrinking} If $(F,g_F)$ has positive scalar curvature, then $(P\times _G F,g_t)$ has positive scalar curvature for $t$ sufficiently small. If $(F,g_F)$ and $(B,g_B)$ have positive Ricci curvature, then $(P\times _G F,g_t)$ has positive Ricci curvature for $t$ sufficiently small.\qed
\end{proposition}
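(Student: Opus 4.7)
The plan is to apply O'Neill's curvature identities for the Riemannian submersion $\pi\colon (P\times_G F, g_t)\to (B, g_B)$ and track how each component of the $g_t$-curvature scales in $t$. By Vilms' theorem (already invoked to construct $g$), the fibers of $\pi$ are totally geodesic in $(P\times_G F, g)$, so the O'Neill integrability tensor $T$ vanishes. The canonical variation preserves this property because the horizontal distribution is unchanged and only the vertical metric is rescaled; consequently only the $A$-tensor contributes to the deviation of $\mathrm{Ric}(g_t)$ from the pulled-back data of base and fiber, and every such contribution carries a positive power of $t$.

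Concretely, fix $x\in P\times_G F$ and a $g$-orthonormal frame consisting of horizontal vectors $(X_i)_{i=1}^q$ and vertical vectors $(U_\alpha)_{\alpha=1}^f$. The O'Neill formulas in the case $T=0$ (Besse, \S 9.F) then yield, after keeping track of how the $A$-tensor of $g_t$ rescales,
\begin{align*}
\mathrm{scal}(g_t)(x) &= \mathrm{scal}(g_B)(\pi(x)) + \tfrac{1}{t}\,\mathrm{scal}(g_F)(x) - t\,|A|^2(x),\\
\mathrm{Ric}(g_t)(X_i,X_j) &= \mathrm{Ric}(g_B)(X_i,X_j) + O(t),\\
\mathrm{Ric}(g_t)(U_\alpha,U_\beta) &= \mathrm{Ric}(g_F)(U_\alpha,U_\beta) + O(t^2),\\
\mathrm{Ric}(g_t)(X_i,U_\alpha) &= O(t),
\end{align*}
where $|A|^2 = \sum_{i,j} |A_{X_i} X_j|^2$ and all error terms are quadratic in the $A$-tensor of $g$ and hence uniformly bounded on the compact space $P\times_G F$ (which is compact because $B$ and $F$ are). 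The scalar-curvature statement is now immediate: $\mathrm{scal}(g_F)$ has a positive lower bound by compactness, so the $\tfrac{1}{t}$-term dominates the bounded terms for all sufficiently small $t$.

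For the Ricci statement, I would pass to a $g_t$-orthonormal frame by rescaling the vertical vectors to $\widetilde U_\alpha := U_\alpha/\sqrt{t}$. The matrix of $\mathrm{Ric}(g_t)$ in this mixed orthonormal frame then has the block form
$$\begin{pmatrix} \mathrm{Ric}(g_B)+O(t) & O(\sqrt{t}) \\ O(\sqrt{t}) & \tfrac{1}{t}\,\mathrm{Ric}(g_F)+O(t) \end{pmatrix}.$$
Positivity of $\mathrm{Ric}(g_B)$ and $\mathrm{Ric}(g_F)$ together with compactness of $B$ and $F$ gives uniform positive lower bounds $\lambda_B$ and $\lambda_F/t$ on the eigenvalues of the two diagonal blocks. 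A standard Schur-complement (block-dominance) estimate, comparing $\sqrt{t}$ with the geometric mean $\sqrt{\lambda_B\cdot\lambda_F/t}$ of these bounds, shows that the full matrix is positive definite for all sufficiently small $t$.

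The main obstacle I anticipate is the careful bookkeeping of the $t$-powers in the O'Neill identities under the canonical variation and the verification that the $A$-contributions to both the horizontal and the mixed Ricci tensor vanish to the indicated order in $t$; once the correct scalings are in place the positivity conclusion is an elementary eigenvalue estimate, uniform on the compact total space.
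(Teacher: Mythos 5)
The paper does not prove this proposition at all: the statement carries a terminal \qed and is introduced with an explicit citation to \cite[Prop.~9.70]{B87}, so the result is simply being imported from Besse's \emph{Einstein Manifolds}. Your proposal therefore cannot be matched against a proof in the paper; instead it reconstructs, correctly in outline, the argument that underlies Besse's Proposition 9.70. The ingredients you list --- vanishing of the $T$-tensor by Vilms' theorem, the scaling $\mathrm{scal}(g_t)=\mathrm{scal}(g_B)+t^{-1}\mathrm{scal}(g_F)-t\lvert A\rvert^2$, the block decomposition of $\mathrm{Ric}(g_t)$ in a $g_t$-orthonormal frame, and the final Schur-complement estimate using compactness --- are exactly the content of Besse \S 9.G (the canonical variation formulas appear there, not in \S 9.F as you cite). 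One remark worth keeping in mind: the block-dominance step genuinely needs the mixed Ricci terms in the $g_t$-orthonormal frame to be $O(1)$ or smaller, since the product of the diagonal lower bounds is only $\lambda_B\lambda_F/t$; your claim that these terms are $O(\sqrt{t})$ is correct (it follows from the fact that the mixed Ricci of $g_t$ on unrescaled vectors is $O(t)$ when $T=0$), but it is the crucial order estimate to verify rather than a free bonus, so it deserves a citation to the explicit canonical-variation Ricci formula rather than a bare "bookkeeping" remark.
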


We will use the proposition for $P\to B$ a principal orthogonal bundle and for $F$ a round sphere or a geodesic ball in a round sphere.

Next, we recall from \cite{GY86} (see also \cite[Thm. 1.10]{W02}) that a metric of positive Ricci curvature can be deformed to a metric of positive Ricci curvature which looks around a given point like a small geodesic ball in the round unit sphere.

\begin{proposition}\label{Proposition local form} A metric $g_B$ on $B$ of positive Ricci curvature can be deformed in a neighborhood of a given point to a metric of positive Ricci curvature which has constant sectional curvature $1$ near the point.\qed
\end{proposition}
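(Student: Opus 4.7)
\medskip
\noindent
\emph{Proof proposal.} The plan is to work in geodesic polar coordinates $(r,\theta)$ on a small geodesic ball $B_\rho(p) \subset (B,g_B)$ centered at the given point $p$, and to carry out the deformation in two stages: first, reduce to a rotationally symmetric metric near $p$; second, replace its warping function by $\sin(r)$, which gives the round unit sphere model $dr^2 + \sin^2(r)\, ds^2_{n-1}$ of constant sectional curvature $1$.

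\emph{Step 1 (Rotational symmetrization).} In geodesic polar coordinates, $g_B = dr^2 + h(r,\theta)$ with $h(r,\cdot)/r^2 \to ds^2_{n-1}$ as $r\to 0$. On $B_\rho(p)$ one averages $h(r,\cdot)$ over the orthogonal action of $O(n)$ on $S^{n-1}$, producing a rotationally symmetric family $f(r)^2 ds^2_{n-1}$ with $f(r) = r + O(r^3)$; a cutoff function in $r$ interpolates smoothly back to the original $g_B$ outside a slightly larger ball. Since $g_B$ deviates from the Euclidean model by $O(r^2)$ on $B_\rho(p)$ and $\mathrm{Ric}(g_B) > 0$ is an open pointwise condition, choosing $\rho$ small enough ensures positivity of Ricci is preserved through the averaging and interpolation.

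\emph{Step 2 (Matching the round warping function).} For a rotationally symmetric metric $dr^2 + \tilde f(r)^2 ds^2_{n-1}$, positive Ricci curvature is equivalent to the two inequalities $-\tilde f''/\tilde f > 0$ and $(n-2)(1-(\tilde f')^2)/\tilde f^2 - \tilde f''/\tilde f > 0$, both of which are satisfied strictly by $\tilde f(r)=\sin(r)$ and, near $r=0$, by $\tilde f(r)=f(r)$. Using a cutoff $\chi$ supported in $[0,\delta]$ with $\chi\equiv 1$ near $0$, set $\tilde f := \chi(r/\delta)\sin(r) + (1-\chi(r/\delta))f(r)$. Since $f(r)$ and $\sin(r)$ differ by $O(r^3)$ with first and second derivatives differing by $O(r^2)$ and $O(r)$, the cutoff-derivative contributions to $\tilde f'$ and $\tilde f''$ are bounded uniformly as $\delta \to 0$, so for sufficiently small $\delta$ the two inequalities persist on the interpolation annulus. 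The resulting metric has constant sectional curvature $1$ on $\{r\leq \delta/2\}$ (where $\tilde f = \sin r$) and equals the rotationally symmetric metric from Step 1 outside $\{r\leq \delta\}$, which in turn equals $g_B$ outside $B_\rho(p)$.

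\emph{Main obstacle.} The delicate point in both steps is controlling the Ricci curvature in the interpolation annuli, since derivatives of a cutoff function of width $\delta$ are of size $\delta^{-1}$, $\delta^{-2}$. The saving observation is that the two metrics being interpolated agree to high order in $r$: to second order with Euclidean in Step~1, and to cubic order between $f$ and $\sin$ in Step~2, precisely because both warping functions have the Taylor expansion $r - O(r^3)$ at the origin. Thus the dangerous terms appear multiplied by compensating powers of $r \leq \delta$ and can be absorbed into the strictly positive background Ricci curvature (of $g_B$ in Step~1, of the round sphere in Step~2) by taking $\rho,\delta$ small. This is the technical heart of the argument and is where one invokes the corresponding statement of Gao--Yau.
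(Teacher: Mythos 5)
The paper does not actually give a proof of this proposition; it is cited to Gao--Yau \cite{GY86} and Wraith \cite[Thm.~1.10]{W02}, with the end-of-proof mark placed directly after the statement. So the question is whether your argument stands on its own, and it does not: there is a genuine gap.

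In Step~2 you interpolate the symmetrized warping function $f(r)$ to $\sin(r)$ on an annulus of width $\delta$ via a cutoff $\chi(r/\delta)$. Your observation that the cutoff-derivative contributions to $\tilde f''$ remain bounded as $\delta\to 0$ is correct (they are of order $\delta$), but it is insufficient: the signal $f''(r)=f'''(0)\,r + O(r^3)$ is \emph{also} of order $\delta$ on the annulus. Both vanish at the same rate, so shrinking $\delta$ does not make the perturbation negligible relative to the background curvature. Concretely, writing $f(r)=r-\tfrac{a}{6}r^3+O(r^5)$ with $a:=-f'''(0)>0$ the sectional curvature at the point, one finds
\[
\tilde f''(\theta\delta)\;=\;\delta\Bigl[-a\theta+\tfrac{a-1}{6}\,(\theta^3\chi)''(\theta)\Bigr]+O(\delta^3),\qquad \theta\in(0,1],
\]
and the Ricci-positivity requirement $\tilde f''<0$ becomes $a\theta>\tfrac{a-1}{6}(\theta^3\chi)''(\theta)$ on $(0,1]$. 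Since $(\theta^3\chi)''$ changes sign for any cutoff $\chi$, this inequality fails when $a$ is small (and can also fail when $a$ is large); it holds only when the original sectional curvature $a$ at the point happens to be close to~$1$. So your claim that "for sufficiently small $\delta$ the two inequalities persist on the interpolation annulus" is incorrect: there is no extra power of $\delta$ available to absorb the error into a "strictly positive background." Step~1's rotational symmetrization has the same flavor --- $g_B$ and its $O(n)$-average differ by $O(1)$ in $C^2$ (their curvature tensors at the point genuinely differ), so after the cutoff the change in Ricci is of the same order as the background Ricci, and the "open condition on a small ball" argument does not close either.

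What the Gao--Yau/Wraith construction actually does is replace the thin-shell cutoff by an explicitly designed warping function on a \emph{thick} transition annulus (with inner radius roughly $\sqrt{a}$ times the outer radius when $a<1$), exploiting that any strictly concave $\tilde f$ with $\tilde f'<1$ automatically satisfies both Ricci inequalities. That is a genuinely different construction. Deferring to Gao--Yau for "the technical heart" is fine, but the specific quantitative claims you assert before that point are false, so the proposal as written does not constitute a proof.
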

Hence, after deforming the metric on $B^q$ locally, we may assume that the metric has positive Ricci curvature and that $D^q_\delta (1)$ can be embedded isometrically into $B$. Here, $\delta >0$ depends on the topology of $B$ and may be very small. After rescaling, we may assume that the metric on $B$ has positive Ricci curvature and that there is an isometric embedding of the geodesic ball $D^q_R(N)$ into $B$ where $R/N=\delta$.

\begin{remark}\label{Remark local form}
For $B$ a $\Zp 2$-manifold and $pt\in B$ a fixed point, the argument in \cite{GY86,W02} also shows the following: An invariant metric $g_B$ on $B$ of positive Ricci curvature can be deformed in a neighborhood of $pt$ to an invariant metric of positive Ricci curvature which has constant sectional curvature $1$ near pt.
\end{remark}

\begin{example}\label{shrinking example} Let $P\to B^q$ be a principal $O(p)$-bundle, $p\geq 3$, and suppose the base manifold $B$ is compact and admits a metric of positive Ricci curvature. By Proposition \ref{Proposition local form} we can equip $B$ with a metric $g_B$ of positive Ricci curvature which comes with an isometric embedding $\overline \varphi :D^q_R(N)\hookrightarrow B$ where $R/N$ may be very small. We fix a principal connection on $P\to B$ which is trivial over $\overline \varphi (D^q_R(N))$.

Let $E:=P\times _{O(p)}D^p\to B$ and $\partial E:=P\times _{O(p)}S^{p-1}\to B$ be the associated disk and sphere bundles, respectively. We equip $D^p$  with the metric $g_{B_{\epsilon} ^p(1)}$ of a spherical cap.

Consider the associated Riemannian submersions $E\to B$ and $\partial E\to B$ with totally geodesic fibers and their canonical variations.
By shrinking the fibers sufficiently (see Proposition \ref{Proposition shrinking}), we see that there exists $\kappa >0$ such that for every $\rho <\kappa$ there is a metric $g_E$ on $E$ of positive Ricci curvature, which restricts to a metric $g_{\partial E}$ on $\partial E$ of positive Ricci curvature, and an isometric embedding $\varphi :D^q_R(N)\times B_{\epsilon} ^p(\rho)\hookrightarrow (E,g_E)$ covering $\overline\varphi $.

Moreover, the boundary $\partial E\subset E$ has positive mean curvature and nonnegative second fundamental form if $\epsilon <\pi /2$ and is totally geodesic if $\epsilon =\pi /2$. 
\end{example}
For $\epsilon <\pi /2$ the principal curvatures of the boundary of a totally geodesic fiber $\partial F_b\subset F_b$, $b\in B$, are positive whereas the second fundamental form vanishes on horizontal vectors. In particular, the mean curvature of $\partial E\subset E$ is positive. For $\epsilon =\pi /2$ the boundary $\partial E$ is totally geodesic by symmetry and, hence, has vanishing second fundamental form and vanishing mean curvature.

For later reference, we point out the following extensions of Example \ref{shrinking example} to the equivariant setting.
\begin{remark}\label{equivariant extension example and remark}  Let $P\to B^q$ be a $\Zp 2$-equivariant principal $O(p)$-bundle, $p\geq 3$, and suppose the base manifold $B$ is compact and admits an invariant metric of positive Ricci curvature. Suppose the induced $\Zp 2$-action on $E:=P\times _{O(p)}D^p$ has an isolated fixed point. Then there exist $R,N>0$ and a positive constant $\kappa$ such that for every $\rho <\kappa$ there is an invariant metric $g_E$ on $E$ of positive Ricci curvature, which restricts to a metric $g_{\partial E}$ on $\partial E$ of positive Ricci curvature, and an isometric equivariant embedding $\varphi :D^q_R(N)\times B_{\epsilon} ^p(\rho)\hookrightarrow (E,g_E)$.

Moreover, the boundary $\partial E\subset E$ has positive mean curvature and nonnegative second fundamental form if $\epsilon <\pi /2$ and is totally geodesic if $\epsilon =\pi /2$. 
\end{remark}

Here, $\Zp 2$ acts isometrically on $D^q_R(N)\times B_{\epsilon} ^p(\rho)$ by $\pm (\id, \id)$ and $\varphi $ is an equivariant coordinate function of $E\to B$ at the isolated fixed point. The proof uses Remark \ref{Remark local form} and shrinking of the fibers. 
%Similarly, Remark \ref{relative shrinking remark} extends to the equivariant setting.

For further reference, we will also recall the notion of a core metric, as introduced in \cite{B19}. A Riemannian metric $g$ on $M^n$ is a \emph{core metric} if there is an embedding $\iota : D^n\hookrightarrow M$ such that $g$ has  positive Ricci curvature on $M\setminus \iota (D^n)$,  the boundary of $M\setminus \iota (D^n)$ has positive definite second fundamental form, and the metric restricted to its boundary is round.
For example, the metric of a round sphere $S^n$, $n\geq 2$, is a core metric (consider the inclusion of a geodesic ball which properly contains a hemisphere).

\subsection{Generalized geometric plumbing}\label{generalized geometric plumbing}
Next, we want to equip certain generalized plumbings with a $scal>0$ metric which restricts to a $Ric>0$ metric on the boundary such that the metric is of product type near the boundary. The generalized plumbings will be of the form $(\square _{\Gamma}E_\vvv)\square V$ where $\Gamma $ is a tree and $E_\vvv$ denotes the total space of a disk bundle over the sphere $S^p$ (resp. $S^q$) of rank $q$ (resp. $p$). The plumbing of $\square _{\Gamma}E_\vvv$ and $V$ is taken \wrt \  a neat embedding into $V$ and a coordinate function of some disk bundle $E_\vvv\to B_\vvv$.

We will restrict to manifolds $V$ which admit a family of special metrics with properties given in Definition \ref{nice coordinate function} below. In particular, we assume that $V$ contains a subspace which metrically looks like a disk bundle when restricted to a coordinate function $\varphi$ as described in Example \ref{shrinking example} and that the metrics yield metrics of positive Ricci curvature on $\partial V$. The key step in the construction is then to show that for a disk bundle $E_\vvv\to B_\vvv$ the plumbing $E_\vvv\square V$ also has such a special family of metrics. By iterating this construction, one arrives at the desired metric on $(\square _{\Gamma}E_\vvv)\square V$.

\begin{definition} \label{nice coordinate function}
Let $V^{p+q}$ be a smooth manifold with boundary. A neat embedding $\psi: D^p\times D^q\hookrightarrow V$ is called a {\bf \em nice metric coordinate function} if there exists $R,N>0$ and a positive constant $\kappa$ such that for every $\rho <\kappa$ there exists a metric $g_V$ on $V$ with the following properties:

The metric $g_V$ has $scal>0$ on $V$,  has $Ric>0$ on $\partial V$, the mean curvature of $\partial V$ is nonnegative, and $\psi$ defines an isometric embedding $D^p_R(N)\times B_{\pi /2} ^q(\rho)\hookrightarrow (V,g_V)$.
\end{definition}

The condition on the mean curvature will be used to deform $g_V$ into a metric which is of product type near the boundary.
The next remark follows from Example \ref{shrinking example}.
\begin{remark}\label{nice coordinate function remark} The total space $E$ of any disk bundle of rank $\geq 3$ over a compact base manifold of positive Ricci curvature has a nice metric coordinate function (where $R/N, \kappa$ depend on the bundle and maybe very small).
\end{remark}

In this situation one has the following result for $p,q\geq 3$ (compare \cite{W95,W97,W98,W11,R23}).

\begin{theorem}\label{plumbing theorem} Let $V$ be a smooth manifold with nice metric coordinate function $\psi: D^p\times D^{q}\hookrightarrow V$, let $W$ be a plumbing of disk bundles over spheres according to a tree, and let $W\square V$ be the plumbing \wrt \ $\psi$ and a coordinate function of a disk bundle $E\to S^q$ of $W$. Then $W\square V$ has a metric of positive scalar curvature which is of product type near the boundary and which restricts to a metric of positive Ricci curvature on the boundary $\partial (W\square V)$.
\end{theorem}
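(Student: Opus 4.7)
The approach is to construct the metric on $W \square V$ by induction on the number of vertices of $\Gamma$, strengthening the statement so that the manifold built at each stage retains a nice metric coordinate function (in the sense of Definition \ref{nice coordinate function}) at every coordinate function of $\Gamma$ that still has to be used. This extra bookkeeping is what allows further disk bundles of $W$ to be plumbed on by iterating a single-vertex base case.

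For the base case one plumbs a single disk bundle $E \to S^q$ of rank $p$ onto $V$. The round metric on $S^q$ has $Ric > 0$ for $q \geq 2$ and any $O(p)$-bundle over $S^q$ is trivial over a closed hemisphere, so Example \ref{shrinking example} produces a $Ric > 0$ metric on $E$ that restricts to $Ric > 0$ on $\partial E$ and under which a chosen coordinate function $\varphi : D^q \times D^p \hookrightarrow E$ realises an isometric embedding of a ``bi-cap'' $D^q_{R_E}(N_E) \times B^p_{\epsilon_E}(\rho_E)$. Parallel to this, the nice metric coordinate function $\psi : D^p \times D^q \hookrightarrow V$ furnishes an isometric embedding $D^p_R(N) \times B^q_{\pi/2}(\rho) \hookrightarrow V$. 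By rescaling the metric on $E$ (which enlarges the admissible $\kappa$) and selecting the parameters $\rho$ and $\rho_E$ from their admissible intervals, one arranges that the swap $I$ identifies the two coordinate patches isometrically. Crucially, taking the fiber parameter at the limit $\epsilon_E = \pi/2$ makes the fiber boundary of $E$ totally geodesic, which is what will make the corner gluing work.

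With the matching in place, the plumbing identification (followed by straightening of the codimension-$2$ corner in the common region) produces a smooth metric on $E \square V$. Away from the newly formed boundary corner $S^{q-1} \times S^{p-1}$ where $\partial E \setminus \varphi(\overset \circ {D^q} \times S^{p-1})$ meets $\partial V \setminus \psi(\overset \circ {D^p} \times S^{q-1})$, the metric inherits $scal > 0$ on the interior and $Ric > 0$ on the boundary directly from $E$ and $V$. At the corner, the totally geodesic fiber boundary of $E$ kills the fiber-direction part of its second fundamental form, so the inequality required by Theorem \ref{Perelman gluing} (for $Ric > 0$ on the boundary) and by Remark \ref{GL mean curvature remark} (for $scal > 0$ in the interior) reduces to the nonnegativity of mean curvatures coming from Example \ref{shrinking example} and Definition \ref{nice coordinate function}. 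This yields the desired metric, and a standard deformation exploiting the nonnegative mean curvature of $\partial(E \square V)$ makes it of product type near the boundary.

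Any additional coordinate function of $E$ supported over a disk in $S^q$ disjoint from $\overline{\varphi}(D^q)$ is untouched by the construction and still satisfies Definition \ref{nice coordinate function} inside $E \square V$, which preserves the strengthened inductive hypothesis; the inductive step then plumbs the next disk bundle of $\Gamma$ by applying the base case with $V$ replaced by the plumbing constructed so far. The main technical obstacle is the careful matching of the two coordinate patches and, in particular, the verification of the second-fundamental-form inequality for Perelman's gluing at the corner $S^{q-1} \times S^{p-1}$. Both points depend crucially on the totally geodesic fiber boundary afforded by the $\epsilon_E = \pi/2$ choice and on the residual freedom in the parameters $\rho, \rho_E$ provided by the nice metric coordinate function framework.
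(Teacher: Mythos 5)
Your inductive framework (carrying a nice metric coordinate function at each still-unused vertex, and reducing to the single-vertex case) matches the paper's strategy, and the final step of using nonnegative mean curvature to produce a product collar is correct. However, the base case has a genuine gap that the whole rest of the argument is built to overcome, and which rescaling alone cannot fix.

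The cross identification in the plumbing sends $\varphi(x,y)$ to $\psi(y,x)$, so to glue isometrically along the coordinate patches you would need the $D^q$-factor coming from $E$ to match the $B^q_{\pi/2}(\rho)$-factor coming from $V$, and the $B^p_{\epsilon_E}(\rho_E)$-factor from $E$ to match the $D^p_R(N)$-factor from $V$. But Example \ref{shrinking example} produces a base factor $D^q_{R_E}(N_E)$ with $R_E/N_E = \delta$ possibly \emph{very small} (it comes from the Gao--Yau local deformation, and $\delta$ depends on the topology of $B$), whereas the nice metric coordinate function for $V$ hands you a \emph{hemisphere} $B^q_{\pi/2}(\rho)$ in the corresponding factor. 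A small cap $D^q_{R_E}(N_E)$ with $R_E/N_E \ll \pi/2$ is not isometric to a hemisphere, and rescaling a round metric by $\lambda^2$ only changes the radius, never the angular extent $R/N$ or the cap parameter $\epsilon$; so no choice of $\rho, \rho_E$ or rescaling factor can produce the required isometry. The same mismatch appears in the fiber factor: taking $\epsilon_E = \pi/2$ gives a hemisphere $B^p_{\pi/2}(\rho_E)$, which cannot be rescaled into the small cap $D^p_R(N)$ that $V$ demands (for $V$ a disk bundle, $R/N$ is again forced small by \ref{Remark local form}).

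This is precisely the problem that the paper's proof is designed to solve. The paper does \emph{not} attempt a direct isometric match of the two coordinate patches; instead it inserts two transition cylinders $Z_2$ and $Z_3$ between the totally geodesic ``hemispherical'' part of $E$ (over $B^q_- \cup Z_1$) and the product patch $D^p_R(N)\times B^q_{\pi/2}(\rho)$ coming from $V$ (over $B^q_+$). Over $Z_2$ the fiber cap angle is shrunk from $\pi/2$ to a small value, and over $Z_3$ the doubly warped product metric $\bar g_{f,h}$, with $f,h$ carefully constructed \`a la Reiser to satisfy the boundary conditions (\ref{bc 1}) and (\ref{bc 2}) and to keep $Ric > 0$ on $(\partial E)_{\vert Z_3}$, interpolates between the two incompatible geometries while also reshaping the round $S^{q-1}$-factor from radius $\alpha$ to the hemisphere boundary radius $\beta\rho$. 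This interpolation — and the verification that it preserves positive Ricci on the boundary and nonnegative mean curvature — is the core technical content of the theorem, and is entirely absent from your proposal. Without it, your Perelman/Gromov--Lawson gluing step never reaches the stage where the boundaries being identified are actually isometric.
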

The existence statement for the $Ric>0$ metric on the boundary was first shown by Wraith if $p=q$ \cite{W95,W98} or if the tree is a straight line \cite[Thm. 7.1]{W95}. The extension to arbitrary trees with $p\neq q$ is due to Reiser \cite{R23}. The statement on the scalar curvature was first shown  in a slightly different setting by Wraith \cite{W11}, up to a deformation of the boundary metric within positive scalar curvature. The techniques in \cite{W98,W11,R23} can be combined to show the conclusion of the theorem, up to such a deformation, which is sufficient for the applications of this paper (see the discussion in \cite[\S 3.2]{RW25}).
% Strictly speaking, the authors assume that $V$ is the total space of a disk bundle. However, their arguments may be adapted to manifolds with nice metric coordinate function.

We will give a slightly different construction following the method of \cite{R23}. This construction can be used to show that the theorem above also holds true in the more general situation where $W$ is a plumbing of disk bundles $E_\vvv \to B_\vvv$  according to a tree provided each base manifold $B_\vvv$ admits a core metric. For the purpose of this paper it suffices to consider bundles over spheres as in the theorem above.
%Generalization will appear in a forthcoming paper.

We will give a sketch of the proof and refer to Section \ref{proof plumbing theorem} for details.

\noindent
\begin{proof}[Sketch of the proof of Theorem \ref{plumbing theorem}] We will first discuss the construction of the metric for the plumbing $E\square V$ of a disk bundle $E\to S^q$ and $V$. The construction can then be iterated to yield a metric on $W\square V$ which satisfies all properties except for the product structure near the boundary, which will be taken care of at the end of the proof.

We decompose the base $S^q$ of the bundle $E$ as a union
$$S^q=B^q_-\cup Z_1\cup Z_2\cup Z_3\cup B^q_+,$$ where $B^q_{-}$ is a closed ball of radius $a_1>0$, $Z_i$ is a cylinder of the form $Z_i= I_i\times S^{q-1}$, $I_i=[a_i,b_i]$, $a_1<b_1=a_2<b_2=a_3<b_3$, and $B^q_{+}=D^q$ is another closed ball.

Let $P\to S^q$ denote the principal $SO(p)$-bundle associated to $E\to S^q$. We fix a trivialization of $P$ over $S^q\setminus B^q_-$ and a principal connection which is trivial over $S^q\setminus B^q_-$. The restriction of $P$ to $S^q\setminus B^q_-$ will be identified with a principal product bundle with trivial principal connection.

\medskip
\noindent
The metric on $E$ will be constructed from metrics on $E_{\vert Z_3}$ and $E_{\vert S^q\setminus Z_3}$. The latter is obtained as follows:

\medskip
\noindent
\underline {Metric $g_B$ on $S^q\setminus Z_3$:} We choose a metric $\tilde g_B=dt^2 +k^2(t)ds^2_{q-1}$ such that $(B^q_{-}\cup Z_1\cup Z_2,\tilde g_B)$ is a geodesic ball in an elliptic paraboloid with boundary the unit sphere $S^{q-1}(1)$ and $0<k^\prime (b_2)<\frac 1 2$. Using Proposition \ref{Proposition local form}, we change the metric $\tilde g_B$ locally at points $p_i$ in the interior of $Z_1$ to obtain a $Ric>0$ metric  $g_B$ such that its restriction to a neighborhood of each $p_i$ is isometric to a small spherical cap $D^q_{\epsilon_i} (1)$ in the unit sphere.
The equip $B^q_{+}$ with the metric (again denoted by $g_B$) of a hemisphere in $S^q(\rho)$, $\rho >0$.

\medskip
\noindent
\underline {Metric $g_+$ on $E_{\vert B^q_+}$:} On $E_{\vert B^q_+}=B^q_+\times D^p=D^q\times D^p$ we choose $g_+$ such that $(E_{\vert B^q_+},g_+)=B_{\pi /2} ^q(\rho) \times D^p_R(N)$. Here, $R$, $N$, and $\rho$ are the parameters given by the nice metric coordinate function $\psi$.

\medskip
\noindent
\underline {Metric $g_{-\cup 1}$ on $E_{\vert B^q_-\cup Z_1}$:} Using Vilms' theorem for the principal connection above and fiber $F=B^p_{\pi /2}(r)$, $r>0$, we obtain a metric $g_{-\cup 1}$ on $E_{\vert B^q_-\cup Z_1}$ such that $(E_{\vert B^q_-\cup Z_1},g_{-\cup 1})\to (B^q_-\cup Z_1,g_B)$ is a Riemannian submersion with totally geodesic fibers isometric to $B^p_{\pi /2}(r)$.
By construction, there is an isometric neat embedding $\psi_i: D^q_{\epsilon_i} (1) \times B^p_{\pi /2}(r)\hookrightarrow E$ onto a trivialization of $E$ at $p_i$, which can be used for subsequent plumbings.

\medskip
\noindent
\underline {Metric $g_2$ on $E_{\vert Z_2}$:}
We equip the fiber of $E_{\vert Z_2}=Z_2\times D^p$ over $(t,x)\in Z_2=I_2\times S^{q-1}$ with a spherical cap metric $g(t)$ such that $(D^p, g(t))=B^p_{\epsilon (t)}(r)$, where
$\epsilon (t):I_2=[a_2,b_2]\to \R _{>0}$ is a smooth, monotonously decreasing function with $\epsilon (a_2)=\pi /2$, which is constant near the endpoints ($\epsilon (b_2)<\pi /2$ will be determined later, see (\ref{bc 1 sketch})).
The resulting metric on $E_{\vert Z_2}$ is of the form $g_2=dt^2 +k^2(t)ds^2_{q-1}+g(t)$.

\medskip
The metrics  $g_{-\cup 1}$and $g_2$ define a smooth metric on $E_{\vert B^q_-\cup Z_1\cup Z_2}$ for which $E_{\vert B^q_-\cup Z_1\cup Z_2}\to B^q_-\cup Z_1\cup Z_2$ is a Riemannian submersion. 
After shrinking the fibers sufficiently, i.e., taking $r$ sufficiently small,  this metric has positive Ricci curvature on $(\partial E)_{\vert B^q_-\cup Z_1\cup Z_2}$ and on $E_{\vert B^q_-\cup Z_1}$ (see Proposition \ref{Proposition shrinking}). Moreover, it can be shown (using \cite[Cor. 9.37]{B87}) that $(E_{\vert Z_2},g_2)$ has positive scalar curvature for $r$ sufficiently small. The metric on $E_{\vert S^q\setminus Z_3}$ is given by this metric together with the metric $g_+$ on $E_{\vert B^q_+}$.

\medskip
\noindent
\underline {Metric $\bar g_{f,h}$ on $E_{\vert Z_3}$:}
Next, we will define a metric on $E_{\vert Z_3}=Z_3\times D^p=[a_3,b_3]\times S^{q-1}\times D^p$, which is of positive scalar curvature, restricts to a $Ric>0$ metric on $\partial E_{\vert Z_3}$, and which is compatible with the metrics above, after possibly rescaling. This is the crucial step in the construction and is essentially due to Reiser \cite{R23,R25}.
The metric on $(\partial E)_{\vert Z_3}$ will be a doubly warped product metric of the form $$g_{f,h}=dt^2+h^2(t)ds^2_{q-1} + f^2(t)ds^2_{p-1},$$
which
depends on two smooth functions $f,h:[a_3,b_3]\to \R _{>0}$ to be determined later. The extension of $g_{f,h}$ to $E_{\vert Z_3}$ will be obtained by the following construction.

We identify $[a_3,b_3]\times D^p$ with a subspace $X$ of the Riemannian cylinder $\R \times S^p (\beta N)$, $\beta >0$, using 
$$\Phi: [a_3,b_3]\times D^p\to \R \times S^p, \quad (t,(s,x))\mapsto (\varphi ( t),(F(\varphi ( t))s,x)), \quad s\in [0,1], x\in S^{p-1},$$ where
$\varphi (t):[a_3, b_3]\to [a_3, \tilde b_3 ]$, $t\mapsto \tilde t $,  is a diffeomorphism
with $\varphi (a_3)=a_3$ and $F(\tilde t):[a_3, \tilde b_3]  \to (0, \beta N \pi)$ is
 a smooth map such that the curve $t \to (\varphi (t), F(\varphi (t))$ in $\R ^2$ is parametrized by arc length and such that $$\frac {f(t)}{\beta N}=\sin \left (\frac {F(\varphi (t))}{\beta N}\right)\text{ for } a_3\leq t\leq b_3.$$ 
Let $X:= \{ (\tilde t, (s, x))\in \R \times S^p \, \mid \, \tilde t\in [a_3,\tilde b_3],\, s\leq F(\tilde t), \, x\in S^{p-1}\}$ be the image of $\Phi$ and let $g_X$ denote the metric on $X$  induced by the inclusion $X\subset \R \times S^p (\beta N)$.

Now let $\bar g_{f,h}$ be the metric on $E_{\vert Z_3}=[a_3,b_3]\times S^{q-1}\times D^p$ defined by requiring that

\begin{itemize}
\item $\bar g_{f,h}$ restricted to $[a_3,b_3]\times \{pt\} \times D^p$ is equal to the pullback of $g_X$ under $\Phi$ \, $\forall \, pt\in S^{q-1}$,
\item $\bar g_{f,h}$ restricted to $\{t\}\times S^{q-1}\times \{pt\}$ is equal to $h^2(t)ds^2_{q-1}$  \, $\forall \, (t,pt)\in [a_3,b_3]\times D^p$, and
\item the tangent spaces of $S^{q-1}$ and $[a_3,b_3]\times D^p$ at every point of $[a_3,b_3]\times S^{q-1}\times D^p$ are orthogonal with respect to $\bar g_{f,h}$.
\end{itemize}

Note that $\bar g_{f,h}$ restricts to $g_{f,h}$ on the boundary. A direct computation shows that $(E_{\vert Z_3},\bar g_{f,h})$ has positive scalar curvature.

\medskip
In the following we will consider the metrics $\alpha ^2 g_{-\cup 1}$, $\alpha ^2 g_2$, $\bar g_{f,h}$, and $\beta ^2 g_+$ on the corresponding pieces of $E$ as well as $\beta ^2 g_V$ on $V$, where $\alpha ,\beta >0$ will be determined later. The task is to choose $f,h$ and parameters above such that the metrics can be glued together (maybe after a subsequent smoothing) to yield a metric on $E\square V$ which is of positive scalar curvature, which restricts to a metric of positive Ricci curvature on the boundary $\partial (E\square V)$, and for which the boundary has nonnegative mean curvature. To ensure this, we will require, in particular, that $\bar g_{f,h}$ agrees with $\alpha ^2 g_2$ and $\beta ^2 g_+$ at the respective boundary part of $E_{\vert Z_3}$ and that Perelman's conditions on the second fundamental forms (see Theorem \ref{Perelman gluing}) are fulfilled at the boundary components of $(\partial E)_{\vert Z_3}$. These boundary conditions are satisfied if
\begin{equation}\label{bc 1 sketch}\epsilon (b_2)=\arcsin \left (\frac {f(a_3)} {\beta N}\right), \quad h(a_3)= \alpha , \quad h^\prime (a_3)\leq  k^\prime (b_2), \quad f(a_3)=\alpha r, \quad \text{and } \quad f^\prime (a_3)= 0,\end{equation}
 as well as
\begin{equation}\label{bc 2 sketch} h(b_3)= \beta \rho , \quad h^\prime (b_3)= 0, \quad f(b_3)=\beta  N\sin (R/N), \text{ and }f^\prime (b_3)= \cos (R/N).\end{equation}

In \cite{R23} Reiser constructs smooth functions $f,h:[a_3,b_3]\to \R _{>0}$ which satisfy conditions (\ref{bc 1 sketch}) and (\ref{bc 2 sketch}) such that, in addition, the  metric $g_{f,h}$ on $(\partial E)_{\vert Z_3}$ has positive Ricci curvature. We will review some of the properties of these functions which will be also important for subsequent mean curvature estimates.

\medskip
\noindent
\underline {Functions $f,h$:}
The smooth functions $f,h:[a_3,b_3]\to \R _{>0}$ are constructed from smooth pieces $f_l,h_l$ (resp. $f_r, h_r$) which are defined on $[a_3,t_1]$ (resp. $[t_1,b_3]$), where $t_1$ and $b_3$ maybe very large, and have the following properties (see Section \ref{proof plumbing theorem} for details):
\begin{itemize}
\item $f_l,h_l$ satisfy the left boundary conditions (\ref{bc 1 sketch}),  $f_r,h_r$ satisfy the right boundary conditions (\ref{bc 2 sketch}),
\item  the metrics $dt^2+h_l^2(t)ds^2_{q-1} + f_l^2(t)ds^2_{p-1}$ and $dt^2+h_r^2(t)ds^2_{q-1} + f_r^2(t)ds^2_{p-1}$, defined on their respective part of $(\partial E)_{\vert Z_3}$, have positive Ricci curvature, if $\alpha ,\rho$ are sufficiently small and $\beta$ is sufficiently large,
\item $h_l,h_r$ define $h$ which is smooth at $t_1$,
\item $f_l,f_r$ satisfy $f_l(t_1)=f_r(t_1)$, $f_l^\prime(t_1)=f_r^\prime(t_1)$, $f_l^{\prime \prime}(t_1)>0$, $f_r^{\prime \prime} (t_1)\leq 0$ and, hence, define a $C^1$-function which is smooth for $t\neq t_1$.
\end{itemize}

By Theorem \ref{Perelman gluing}, the functions $f_l$ and $f_r$ can be smoothed at $t=t_1$ 
to obtain a smooth function $f$ on $[a_3, b_3]$, such that the metric $g_{f,h}=dt^2+h^2(t)ds^2_{q-1} + f^2(t)ds^2_{p-1}$ on $(\partial E)_{\vert Z_3}=[a_3,b_3]\times S^{q-1}\times S^{p-1}$ has positive Ricci curvature. 

We will use $\Phi$ to identify $E_{\vert Z_3}=[a_3,b_3]\times S^{q-1}\times D^p$ with $X\times S^{q-1}\subset [a_3,\tilde b_3] \times S^p \times S^{q-1}$ and denote by $\hat g_{f,h}$ the metric on $X\times S^{q-1}$ induced by $\bar g_{f,h}$. Since $f_r,h_r$ satisfy (\ref{bc 2 sketch}), the fiber $F_{\tilde b_3}$ of $(X\times S^{q-1},\hat g_{f,h})\to [a_3,\tilde b_3]$ over $\tilde b_3$ is isometric to $\partial B^q_{\pi /2}(\beta \rho)\times D^p_{\beta R}(\beta N)$.

We will also identify $(E_{\vert B^q_+},\beta ^2g_+)$ with the  image of $D^p_{\beta R}(\beta N)  \times B_{\pi /2} ^q(\beta \rho)$ in $(V,\beta ^2 g_V)$ under the isometry given by the nice metric coordinate function $\psi$. 

Under these identifications, taking the union $E_{\vert Z_3}\cup E_{\vert B^q_+}$ corresponds to gluing $X\times S^{q-1}$ and $V$ by identifying $F_{\tilde b_3}$ isometrically with  $\psi(D^p_{\beta R}(\beta N)  \times \partial B_{\pi /2} ^q(\beta \rho))\subset V$.

The function $f_r$ can be chosen such that, in addition to the properties above, the curve $\tilde t \mapsto (\tilde t, F(\tilde t))$ induces a smoothing of the corners which arise in the plumbing construction for $E$ and $V$.  Moreover, the function $h_r$ can be chosen such that the metric  on the gluing of $X\times S^{q-1}$ and $V$ is also smooth near the gluing area (see Section \ref{proof plumbing theorem} for details).

This gives a metric on $E\square V$  which is smooth except for points with $t=a_3$. It follows from Theorem \ref{Perelman gluing} that the functions $f$ and $h$ can be smoothed at $t=a_3$ such that the new metric on $E\square V$ is of positive scalar curvature and restricts to a $Ric >0$ metric on $\partial (E\square V)$.

\medskip
\noindent
The construction above can be carried out such that the boundary of $E\square V$ has nonnegative mean curvature. For the boundary part belonging to $V$, this is the case, since $V$ admits a nice metric coordinate function. For the part over $B^q_-\cup Z_1$, the mean curvature vanishes, since the boundary is totally geodesic. It follows from the construction that for $r$ sufficiently small the mean curvature of $\partial E\subset E$ is nonnegative over $Z_2$. The analysis for the part over $Z_3$ is more involved. A computation shows, roughly speaking, the following:

The principal curvatures in direction $S^{q-1}$ contribute in the negative to the mean curvature. The contribution in "$t$-direction" is negative if and only if  $f^{\prime \prime}>0$. Moreover, these contributions become arbitrarily small for $\beta \gg 0$. On the other hand, the principal curvatures in direction $S^{p}$ are uniformly bounded from below by a positive constant (see Section \ref{proof plumbing theorem} for precise statements and details). Hence, the mean curvature of $\partial E\subset E$ is positive over $Z_3$ provided $\beta $ is sufficiently large.

Altogether, we see that the metric $g_{E \square V}$ on $E \square V$ can be chosen such that the boundary of $E\square V$ has nonnegative mean curvature.

Since $\psi_i: D^q_{\epsilon_i} (1) \times B^p_{\pi /2}(r)\hookrightarrow (E\square V, g_{E \square V})$ is, up to a scaling factor $\alpha $, an isometric embedding and $r>0$ can be chosen to be arbitrarily small, $E\square V$ has a nice metric coordinate function $\psi _i$ which can be used for a subsequent plumbing.

By iterating this process, one obtains a $scal>0$ metric $g_{W\square V}$ on $W\square V$ which restricts to a $Ric>0$ metric $g_{\partial (W\square V)}$ on $\partial (W\square V)$ such that the boundary has nonnegative mean curvature.

\medskip
Finally, we add a metric color $(\partial (W\square V)\times [0,\delta], g_{\partial (W\square V)} +ds^2)$  to $(W\square V,g_{W\square V})$ along $\partial (W\square V)=\partial (W\square V)\times \{0\}$ to obtain a $C^0$-metric on the union. By Remark \ref{GL mean curvature remark}, this metric can be deformed near the gluing area to yield a smooth metric of positive scalar curvature on $(\partial (W\square V)\times [0,\delta])\cup W\square V\cong W\square V$. The resulting metric has positive Ricci curvature at the boundary and is of product type near the boundary.
\end{proof}

\begin{remark}\label{remark plumbing nice coordinate function} The proof shows that if $W$ is a plumbing of disk bundles over spheres according to a tree and $V$ has a nice metric coordinate function, then $W\square V$ also has a nice metric coordinate function.
\end{remark}

\section{Equivariant plumbing}\label{Equivariant plumbing}
In dimension $4k+1$ we will use eta invariants to distinguish geometries of positive Ricci curvature. Since these invariants are trivial for simply connected manifolds, we will consider manifolds with non-trivial fundamental group. The manifolds under consideration arise as quotient of the boundary of manifolds with nice equivariant metric coordinate function (see Definition \ref{nice equivariant coordinate function} below) by a free $\Zp 2$-action. For the construction of $Ric >0$ metrics on these quotients, we will need to extend the results from the last sections to the equivariant setting.

First, we discuss equivariant plumbing of disk bundles.
Let $\xi _1$ be a $\Zp 2$-equivariant oriented real Riemannian vector bundle of rank $p$ over a compact connected oriented base manifold $B_1^q$. Assume $\Zp 2$ acts on $\xi _1$ with isolated fixed points. Let $E_1\to B_1$ and $\partial E_1\to B_1$ be the associated linear disk and sphere bundle, respectively. Note that $\Zp 2$ acts on $B_1$ with isolated fixed points and that at a fixed point there is an equivariant orientation preserving coordinate function $\varphi _1: D^q\times D^p\hookrightarrow E_1$ where $\Zp 2$ acts on $D^q\times D^p$ by $\pm (\id, \id)$. Also note that $\Zp 2$ acts freely on $\partial E_1$.

\begin{example}\label{equivariant bundle example}
Let $\Zp 2$ act linearly on $S^q=D^q_+\cup D^q_-$ with two fixed points (we assume the non-trivial element of $\Zp 2$ acts by $-\id $ on $D^q_{\pm}$). Let $\xi$ be a rank $p$ oriented vector bundle over $S^q$ given by a characteristic map $\phi:S^{q-1}\to SO(p)$. Assume $\phi (-x)=\phi (x)$ for all $x\in S^{q-1}$. Then the $\Zp 2$-action can be lifted to the disk bundle associated to $\xi$ by letting the non-trivial element of $\Zp 2$ act on $D^q_{\pm}\times D^p$ by $(-\id ,-\id)$.
\end{example}

Next, consider another $\Zp 2$-equivariant oriented real Riemannian vector bundle $\xi _2$ of rank $q$ over an oriented compact connected base manifold $B_2^p$. Again we assume that $\Zp 2$ acts with isolated fixed points. Let  $E_2\to B_2$ denote the associated disk bundle and let  $\varphi _2: D^p\times D^q\hookrightarrow E_2$ be an equivariant orientation preserving coordinate function at a fixed point.

In this situation one can plumb the two disk bundles together using the equivariant coordinate functions and cross identification. The resulting space $E_1\square E_2$ is obtained from the disjoint union $E_1\coprod E_2$ by identifying $\varphi _1(x,y)$ with $\varphi _2(y,x)$, $(x,y)\in D^q \times D^p$, and straightening out the angles equivariantly. Note that the induced $\Zp 2$-action on $E_1\square E_2$ has isolated fixed points and is free on the boundary $\partial (E_1\square E_2)$.

The plumbing construction can be iterated and thereby generalized to a plumbing of $\Zp 2$-equivariant disk bundles at isolated fixed points according to a suitable graph.
Note that the equivariant diffeomorphism type of the plumbing may depend on the choice of the equivariant coordinate functions.

Next, we consider the generalized plumbing construction described in \S \ref{subsection generalized plumbing} in the equivariant context. Let $V^{p+q}$ be a smooth $\Zp 2$-manifold with neat equivariant embedding $\psi: D^p\times D^q \hookrightarrow V$ where, as before, $\Zp 2$ acts on $D^p\times D^q$ by $\pm (\id, \id)$. Suppose that $Q^{p+q}$ is another smooth $\Zp 2$-manifold with boundary and $\zeta : D^q\times D^p\hookrightarrow Q$ a neat  equivariant embedding. Then one can define the equivariant plumbing $Q\square V$ \wrt \ $(\psi, \zeta)$ by taking the disjoint union $Q\coprod V$ and identifying $\zeta (y,x)$ with $\psi (x,y)$, $(x,y)\in D^p \times D^q$, and straightening out the angles equivariantly.

We will also need to adapt the geometric plumbing discussed in \S \ref{generalized geometric plumbing} to the equivariant setting. The next definition corresponds to Definition \ref{nice coordinate function}.

\begin{definition} \label{nice equivariant coordinate function}
Let $V^{p+q}$ be a smooth $\Zp 2$-manifold with boundary. A neat equivariant embedding\linebreak $\psi: D^p\times D^q\hookrightarrow V$ is called a {\bf \em nice equivariant metric coordinate function} if there exists $R,N>0$ and a positive constant $\kappa$ such that for every $\rho <\kappa$ there exists an invariant metric $g_V$ on $V$ with the following properties:

The metric $g_V$ has $scal>0$ on $V$,  has $Ric>0$ on $\partial V$, the mean curvature of $\partial V$ is nonnegative, and $\psi$ defines an isometric embedding $D^p_R(N)\times B_{\pi /2} ^q(\rho)\hookrightarrow (V,g_V)$.
\end{definition}

As in the non-equivariant situation, nice equivariant metric coordinate functions occur naturally for disk bundles (see Remark \ref{equivariant extension example and remark}).

\begin{remark}\label{nice equivariant coordinate function remark} Let $E\to B^p$ be a $\Zp 2$-equivariant disk bundle of rank $q\geq 3$. Suppose the $\Zp 2$-action on $E$ has an isolated fixed point. Suppose $B$ is compact and has an invariant metric of positive Ricci curvature. Let $\psi: D^p\times D^q\hookrightarrow E$ be an equivariant coordinate function of $E$ at the isolated fixed point. Then $\psi $ is a nice equivariant metric coordinate function (where $R/N, \kappa$ depend on the bundle and maybe very small).
\end{remark}

The proof of Theorem \ref{plumbing theorem} can be adapted to show the following equivariant version which is sufficient for the purposes of this paper  (see \cite[\S 7.2]{DG21} for some details). 

Let $E_i\to S^{n_i}$, $i=1,\ldots , k$, be equivariant disk bundles of rank $r_i$, as in Example \ref{equivariant bundle example}, with $\{r_i,n_i\}=\{p,q\}$. As before, we assume $p,q\geq 3$.

Suppose that $W:=E_1\square \ldots \square E_k$ is the plumbing at isolated fixed points of these disk bundles according to a straight line (note that the equivariant plumbing construction is restricted to a straight line since $\Zp 2$ acts on each disk bundle above with two isolated fixed points). Consider an equivariant coordinate function $\varphi _k$ of $E_k$ at the isolated fixed point which has not been used for the plumbing. 
Then one has the following equivariant version of Theorem \ref{plumbing theorem}.
\begin{theorem}\label{equivariant plumbing theorem} Assume $V$ has a nice equivariant metric coordinate function $\psi: D^p\times D^{q}\hookrightarrow V$. Let $W\square V$ be the equivariant plumbing \wrt \ $\psi$ and the equivariant coordinate function $\varphi _k$. Then $W\square V$ has an invariant metric of positive scalar curvature which is of product type near the boundary and which restricts to an invariant metric of positive Ricci curvature on the boundary $\partial (W\square V)$.
\end{theorem}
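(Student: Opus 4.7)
The plan is to run the proof of Theorem \ref{plumbing theorem} $\Zp 2$-equivariantly. The key structural observation is that the $\Zp 2$-action on each disk bundle $E_i\to S^{n_i}$ from Example \ref{equivariant bundle example} fixes the two poles of $S^{n_i}$, acts on the base by a rotation of order two about the polar axis, and acts on the fiber $D^{r_i}$ by $-\id$. Consequently, any construction that is rotationally symmetric about the polar axis of $S^{n_i}$, together with a fiber metric invariant under $-\id$, is automatically $\Zp 2$-invariant.

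First I would choose, for each $E_i$, the decomposition $S^{n_i}=B^{n_i}_-\cup Z_1\cup Z_2\cup Z_3\cup B^{n_i}_+$ so that $B^{n_i}_\pm$ are small geodesic balls centered at the two polar fixed points and the $Z_j$ are rotationally symmetric cylinders about the polar axis; the base metrics (the paraboloid and the round cap on $B^{n_i}_+$), the warped product metric $\bar g_{f,h}$ over $Z_3$, and the product metric on $E_{\vert B^{n_i}_+}$ are then manifestly invariant. For the bundle side, the associated principal $SO(r_i)$-bundle admits a $\Zp 2$-invariant principal connection (by averaging an arbitrary connection over $\Zp 2$), so Vilms' theorem yields an invariant submersion metric over $B^{n_i}_-\cup Z_1\cup Z_2$ with invariant fiber $B^{r_i}_{\pi /2}(r)$. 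The metric on $V$ is invariant by assumption.

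The non-equivariant proof chooses interior points $p_i\in Z_1$ and modifies the base metric locally to a spherical cap so as to obtain nice coordinate functions for subsequent plumbings. In the equivariant setting, plumbings must take place at isolated fixed points, so this role is played instead by the southern polar fixed point at the center of $B^{n_i}_-$. Using Remark \ref{Remark local form} (the equivariant analogue of Proposition \ref{Proposition local form}), the base metric is modified equivariantly in a neighborhood of this fixed point so as to be isometric to a small spherical cap in the round sphere; together with the invariant bundle construction and sufficient fiber shrinking (cf.\ Remark \ref{equivariant extension example and remark}), this yields a nice equivariant metric coordinate function at the south pole of each $E_i$. These are used to equivariantly plumb $E_{i-1}$ to $E_i$ at their common polar fixed point, and the remaining northern pole of $E_k$ (carrying $\varphi _k$) is used to plumb with $V$ via $\psi$.

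The remaining steps---the smoothings of $f$ and $h$ at $t_1$ and $a_3$, the application of Perelman's gluing (Theorem \ref{Perelman gluing}), the mean-curvature and scalar/Ricci estimates, and the addition of a product collar---are all carried out along rotationally symmetric directions and thus preserve $\Zp 2$-invariance. I expect the main technical point to be verifying that the local modification near the polar fixed point genuinely produces a nice equivariant metric coordinate function; this follows from Remark \ref{equivariant extension example and remark} once the fiber radius has been chosen small enough. A worked-out special case of this type of equivariant construction appears in \cite[\S 7.2]{DG21}.
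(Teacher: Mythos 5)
Your proposal follows essentially the same route as the paper's proof: decompose each base sphere so that the two $\Zp 2$-fixed points sit at the centers of $B^q_\pm$, work with an invariant connection, modify the base metric equivariantly near the south-pole fixed point so as to get a nice equivariant metric coordinate function there, omit the interior points $p_i$ (since further plumbings must happen at fixed points), and observe that the remaining steps of the Theorem~\ref{plumbing theorem} construction are manifestly invariant. Two small remarks on your bookkeeping: the invariant connection must in addition be \emph{trivial} over $(S^q\setminus B^q_-)\cup U$ (where $U$ is a small invariant ball about the south pole) so that the product structure over $Z_2,Z_3,B^q_+$ used in the non-equivariant proof is preserved — averaging an arbitrary connection is not enough unless one starts from a $\Zp 2$-equivariant trivialization over those regions; and for the local modification near the fixed point you cite Remark~\ref{Remark local form}, whereas the paper notes that since $\tilde g_B$ is a paraboloid metric the deformation can in fact be done within positive sectional curvature, but either reference suffices.
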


\begin{proof} The proof is essentially the same as the one for Theorem \ref{plumbing theorem} up to a few minor changes. It suffices to consider the construction of the metric on $E_k\square V$. We will assume that $E_k$ has rank $p$ (the same reasoning applies if $E$ is of rank $q$). We follow the notation in the proof of Theorem \ref{plumbing theorem} and let $E:=E_k\to S^q$.

We decompose the base sphere as a union $S^q=B^q_-\cup Z_1\cup Z_2\cup Z_3\cup B^q_+$ such that the $\Zp 2$-fixed points are the centers of the balls $B^q_{\pm }$ ($\Zp 2$ acts by $\pm \id$ on the parallels $S^{q-1}$). In the proof of Theorem \ref{plumbing theorem} we make the following changes: Let $U$ be a small invariant ball around the center of $B^q_-$. We fix a trivialization of the equivariant principal $SO(p)$-bundle $P$ over $(S^q\setminus B^q_-)\cup U$ and fix an invariant connection which is trivial over $(S^q\setminus B^q_-)\cup U$. We modify the metric $\tilde g_B$ on $U$ such that $U$ is equivariantly isometric to a small spherical cap $D^q_\epsilon$ (this can be done inside positive sectional curvature). However, we do not make any modifications of $\tilde g_B$ at other points $p_i$ since there are no further fixed points in $S^q\setminus B^q_+$.

With these modifications all metric constructions in the proof of Theorem \ref{plumbing theorem} are invariant under the $\Zp 2$-action and, hence, yield a metric with the properties stated in Theorem \ref{equivariant plumbing theorem}.
\end{proof}

Let $\varphi _1$ denote an equivariant coordinate function of $E_1$ at the isolated fixed point which has not been used for the plumbing.
\begin{remark}\label{remark equivariant plumbing nice coordinate function} It follows from the proof that $\varphi _1$ is a nice equivariant metric coordinate function of $W\square V$.
\end{remark}

Note that if $\Zp 2$ acts freely on $\partial V$, then the $\Zp 2$-action on $\partial (W\square V)$ is also free. 
In Section \ref{section 4k+1} we will use Theorem \ref{equivariant plumbing theorem} to exhibit $\Zp 2$-quotients which admit infinitely many different geometries of positive Ricci curvature.

\section{Index theory for $Spin^c$-manifolds}\label{index section}
In this section we briefly recall index formulas for Dirac operators, eta invariants as well as equivariant refinements. We will restrict the discussion to $Spin^c$-structures which include spin structures as a special case.
We first review the construction of the group $Spin^c(k)$ and its representations following \cite{ABS64,
G85,
LM89}.
\subsection{The group $Spin^c(k)$}
Let $V$ be a $k$-dimensional real vector space with scalar product $\langle \; , \; \rangle $, ${\cal T}(V)=\R \oplus V \oplus V\otimes V \oplus \ldots$ its tensor algebra, and $Cl(V):={\cal T}(V)/{\cal I}(V)$ the Clifford algebra. Here, ${\cal I}(V)$ is the two-sided ideal in ${\cal T}(V)$ generated by $v\otimes w + w\otimes v + 2\langle v,w\rangle $, $v,w\in V$.

The endomorphism $-\id _V$ induces an endomorphism $\alpha :Cl(V)\to Cl(V)$ of the Clifford algebra and a $\Zp 2$-grading $Cl(V)=Cl^0(V)\oplus Cl^1(V)$ which corresponds to the decomposition into $\pm $-eigenspaces of $\alpha $.

The linear involution on ${\cal T}(V)$, defined by $v_1\otimes \ldots \otimes v_k\mapsto v_k\otimes \ldots \otimes v_1$, leaves the ideal ${\cal I}(V)$ invariant and descends to the \emph{transpose} $(\; )^t :Cl(V) \to Cl(V)$, which is linear, grading preserving, and satisfies $(x \cdot y)^t=y^t\cdot x^t$.

Let $(e_1,\ldots,e _k)$ be an orthonormal basis (ONB) of $V$. The Clifford algebra $Cl(V)$ can be identified with the $\R $-algebra generated by the unit $1$ and the symbols $e^\prime _i$, $i=1,\ldots ,k$, subject to the relations ${e^\prime _i} ^2=-1$ $\forall \, i$ and $e^\prime _i e^\prime _j+e^\prime _j e^\prime _i=0$ $\forall \, i\neq j$ (the identification is induced by $e_i\mapsto e^\prime _i$, $i=1,\ldots ,k$).

The volume element $\omega:=e_1\cdot \ldots \cdot e _k$ satisfies $\omega ^2=(-1)^{\frac {k(k+1)} 2 + k}$ and $e _i\omega =(-1)^{k-1} \omega  e _i$ $\forall \, i$. Hence, $\omega x = x \omega $ $\forall \, x\in Cl(V)$ if $k$ is odd and $\omega x =\alpha (x)  \omega $ $\forall \, x\in Cl(V)$ if $k$ is even (see [LM89], Prop. 3.3, p. 22).

For $k=2l$, let $\alpha _l:=i^l\omega $. Note that $\alpha _l^2=1$, $\alpha _l x=\alpha (x)  \alpha _l$ $\forall \, x\in Cl(V)$, and $\alpha _l$ only depends on the orientation given by the ONB $(e _1,\ldots,e _{2l})$ (see [LM89], p. 34).

Let $pin(V):=\{x\in Cl(V)\, \mid \, x=v_1\cdot \ldots \cdot v_n,\,  v_i\in V, \, \vert \vert v_i\vert \vert =1 \, \forall \, i, n\in\N \}$. It follows that $(pin(V), \cdot)$ is a closed subgroup of the group of units of $(Cl(V),\cdot)$ and is, in fact, a compact Lie group.

Any vector $v\in V$ of length $\vert\vert v\vert \vert =1$ acts on $V$ by $y\mapsto v\cdot y\cdot v^t$ as the reflection along the hyperplane $v^\perp\subset V$. The map $\rho (x) (y):=x\cdot y\cdot x^t$ defines a surjective homomorphism $pin(V)\to O(V)$ of Lie groups which can be seen to be a twofold covering with kernel equal to $\{\pm 1\}$, i.e., one has a short exact sequence of compact Lie groups
$0\to \Zp 2\to pin(V)\overset{\rho} \to O(V)\to 0$.

Let $Spin(V):= pin(V)\cap Cl^0(V)$. The elements of $Spin(V)$ act by an even number of reflections on $V$ and one obtains a short exact sequence
 $$0\to \Zp 2\to Spin(V)\overset{\rho} \to SO(V)\to 0.$$
 We will need the Lie group $Spin^c(V)$ which is defined by
 $$Spin^c(V):=Spin(V)\times _{\Zp 2}U(1),$$
 where $\Zp 2$ acts by $\pm (1,1)$. Note that $Spin^c(V)$ is contained in the complex Clifford algebra $\C l(V):=Cl(V)\otimes \C $ and that there is a short exact sequence
$$1\to \Z_2 \to Spin^c (V) \xrightarrow {\rho \times (\; )^2} SO(V)\times U(1)\to 1,$$
where $(\; )^2:U(1)\to U(1)$, $\lambda \mapsto \lambda ^2$ (see \cite{ABS64} for details).

If $V$ is equal to $\R ^k$ with standard scalar product and standard ONB $(e _1,\ldots,e _k)$, we write $Cl_k$, $\C l_k$, $Spin(k)$, and $Spin^c(k)$ for $Cl(V)$, $\C l(V)$, $Spin(V)$, and $Spin^c(V)$, respectively.

The complex Clifford algebra $\C l_{2l}$ has up to equivalence one irreducible representation $\triangle _{2l}$. One can show that the left regular representation of $\C l_{2l}$ decomposes into $2^l$ representations of (complex) dimension $2^l$, each equivalent to $\triangle _{2l}$. Hence, we may assume that $\C l_{2l}$ acts on $\triangle _{2l}$ by Clifford multiplication.
%Note that Clifford multiplication $\C l_{2l}\times \triangle _{2l}\to \triangle _{2l}$ is $Spin^c(2l)$-equivariant.

The restriction of $\triangle _{2l}$ to $\C l^0_{2l}$ decomposes into two inequivalent irreducible $\C l^0_{2l}$-representations $\triangle ^\pm _{2l}$ of dimension $2^{l-1}$ which are the $\pm $-eigenspaces of $(\alpha _l \cdot ):\triangle _{2l} \to \triangle _{2l}$ and are interchanged under Clifford multiplication with a nonzero element of $\C ^{2l}\subset \C l_{2l}$.

The complex Clifford algebra $\C l_{2l+1}$ has two inequivalent irreducible representations $\triangle _{2l+1}^{\pm }$, both of dimension $2^l$. The representation $\triangle _{2l+1}^+$ can be defined by taking the $\C l_{2l}$-representation $\triangle _{2l}$ and letting $e_{2l+1}$ act by Clifford multiplication with $-\alpha _l$. The representation $\triangle _{2l+1}^-$ is obtained from $\triangle _{2l+1}^+$  by composing the action of each $e_{i}$ with $-\id$. The two representations can be distinguised by the action of $(e_{1}\cdot \ldots \cdot e_{2l+1})$ via Clifford multiplication which is given by multiplication with $\pm i^{-l-1}$ on $\triangle _{2l+1}^{\pm }$. Both representations are equivalent as  $\C l^0_{2l+1}$-representation and are equivalent to $\triangle _{2l}$ over $\C l_{2l}$  (see \cite[p. 256--257]{G85} for details).
 
\subsection{$Spin^c$-manifolds}\label{section spinc manifolds} In this subsection we briefly review basic properties of $Spin^c$-structures (for more details see, for example, \cite{ABS64,
%G85,
LM89}). The definition of a $Spin^c$-structure depends on topological and metric data. If one strips away the metric information, one obtains the notion of a {\em topological} $Spin^c$-structure. Conversely, a topological $Spin^c$-structure together with appropriate metric data determines a $Spin^c$-structure. Moreover, once the metric data is fixed, the equivalence classes of topological $Spin^c$-structures correspond one-to-one to the equivalence classes of $Spin^c$-structures (see, for example, \cite[\S 3.2]{DG21} for details). To keep the exposition light, we will call a topological $Spin^c$-structure also a $Spin^c$-structure when the context is clear.

Let $\xi$ be an oriented Riemannian vector bundle of rank $k$ over a topological space $X$ and $P_{SO(k)}\to X$ the associated principal bundle of oriented orthonormal frames.  A $Spin^c$-structure for $\xi$ or for $P_{SO(k)}\to X$ consists of a principal $U(1)$-bundle $P_{U(1)}\to X$ and a principal $Spin^c (k)$-bundle $P\to X$ together with a $Spin^c (k)$-equivariant bundle map $P\longrightarrow P_{SO(k)} \times P_{U(1)}$ over $X$ where $Spin^c (k)$ acts on $P_{SO(k)} \times P_{U(1)}$ via $(\rho \times (\; )^2):Spin^c (k)\to SO(k)\times U(1)$.

It is known that $\xi$ admits a 
 $Spin^c$-structure with associated principal bundle $P_{U(1)}\to X$ if and only if $w_2(\xi)$ is the mod $2$ reduction of the first Chern class of $P_{U(1)}\to X$. Moreover, for a fixed choice of $P_{U(1)}\to X$, the set of inequivalent $Spin^c$-structures, if not empty, is parametrized by $H^1(X;\Zp 2)$.
 
A $Spin^c$-structure on an oriented Riemannian manifold $X$ is by definition a $Spin^c$-structure of its tangent bundle. The first Chern class of the associated principal $U(1)$-bundle will be denoted by $c$ and will be referred to as the first Chern class of the $Spin^c$-structure.

If $X$ has non-empty boundary, then a (topological) $Spin^c$-structure on $X$ induces a (topological) $Spin^c$-structure on $\partial X$.

We note that an oriented closed manifold homotopy equivalent to $\R P^k$ always admits a topological $Spin^c$-structure. Moreover, there are precisely two such structures up to equivalence.
%For the standard projective space $\R P^k=S^k/{\{\pm 1\}}$ these structures can be constructed directly (see, for example, \cite[\S 3]{G85}).

Next, we recall that for a Hermitian vector bundle the underlying real Riemannian vector bundle inherits a $Spin^c$-structure. The associated principal $U(1)$-bundle is associated to the determinant bundle of the Hermitian vector bundle (see \cite[\S 3]{ABS64} for details). In particular, an almost Hermitian manifold is a  $Spin^c$-manifold.

In the presence of a group action, one can consider invariant $Spin^c$-structures and invariant almost complex structures. We will only need the special case of smooth $\Zp 2$-actions.

Let $M$ be a $k$-dimensional Riemannian manifold with isometric $\Zp 2$-action and let $P\to M$ be a $\Zp 2$-invariant $Spin^c$-structure. Note that the $\Zp 2$-action on $P$ induces the action on the frame bundle $P_{SO(k)}\to M$ and induces a $\Zp 2$-action on $P_{U(1)}$.

The $\Zp 2$-action on $P$ covers the action on $P_{SO(k)} \times P_{U(1)}$. Another lift of the latter action can be obtained as follows. Let $\tau :P\to P$ denote the action of the non-trivial element of $\Zp 2$ on $P$ and let $\delta $ denote the non-trivial deck transformation of the twofold covering $P\to P_{SO(k)} \times P_{U(1)}$. Note that $\delta $ is given by the principal action of $\pm (1,-1)\in Spin^c(k)$ on $P$. One can verify directly that the map $\tilde \tau :=\tau\circ \delta$ defines a $\Zp 2$-action on $P$ which lifts the action on $P_{SO(k)} \times P_{U(1)}$. Moreover, the action given by $\tilde \tau $ is inequivalent to the one given by $\tau$. In this way one obtains a new invariant $Spin^c$-structure which is not equivalent to the original invariant structure.

If the $\Zp 2$-action on $M$ is free, then an invariant $Spin^c$-structure induces a 
$Spin^c$-structure on the quotient manifold $M/\Zp 2$. Moreover, one has a one-to-one correspondence between equivalence classes of invariant $Spin^c$-structures on $M$ and equivalence classes of $Spin^c$-structures on $M/\Zp 2$. The same holds true if one considers topological $Spin^c$-structures.

For example, the two inequivalent topological $Spin^c$-structures on $\R P^{2l+1}$ (for the standard orientation) correspond to two inequivalent invariant topological $Spin^c$-structures on the sphere $S^{2l+1}$ (for the standard orientation) where $\Zp 2$ acts by $\pm \mathrm{id}$ on $S^{2l+1}$. Note that the standard complex structure on $D^{2l+2}\subset  \C ^{l+1}$ is invariant \wrt \ this action and induces one of the two invariant topological $Spin^c$-structures on $D^{2l+2}$. Composing the action with the deck transformation $\delta$ as above, one obtains the other one. The induced structures on the boundary are precisely the two invariant inequivalent topological $Spin^c$-structures on $S^{2l+1}$. In particular, we have the following lemma which we point out for further reference.

\begin{lemma}\label{lemma spinc on disk}
An invariant topological $Spin^c$-structure on $S^{2l+1}$ is induced by an invariant topological $Spin^c$-structure on $D^{2l+2}$, which is unique up to equivalence.\qed
\end{lemma}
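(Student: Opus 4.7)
The existence part---that every invariant topological $Spin^c$-structure on $S^{2l+1}$ is the restriction of some invariant topological $Spin^c$-structure on $D^{2l+2}$---is already contained in the paragraph preceding the lemma: the standard complex structure on $D^{2l+2}\subset \C^{l+1}$ and its $\delta$-twist provide two inequivalent invariant structures on $D^{2l+2}$ whose boundary restrictions exhaust the two invariant structures on $S^{2l+1}$. So the remaining content of the lemma is uniqueness: I must show that, up to equivalence, an invariant topological $Spin^c$-structure on $D^{2l+2}$ is determined by its restriction to the boundary.

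My plan is to prove this by verifying that there are at most two inequivalent invariant topological $Spin^c$-structures on $D^{2l+2}$, so that the two already constructed exhaust the list and the restriction map to invariant structures on $S^{2l+1}$ becomes a bijection on equivalence classes. The argument has two ingredients. First, since $D^{2l+2}$ is contractible, every principal $U(1)$-bundle over it is trivial and $H^1(D^{2l+2};\Zp 2)=0$, so by the classification recalled in Section \ref{section spinc manifolds} the underlying non-equivariant topological $Spin^c$-structure $P\longrightarrow P_{SO(2l+2)}\times P_{U(1)}$ is unique up to equivalence. Second, with this underlying structure fixed, any two $\Zp 2$-equivariant refinements correspond to two lifts of the action on $P_{SO(2l+2)}\times P_{U(1)}$ to $P$. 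Any two such lifts differ fibrewise by an element of $\ker(Spin^c(2l+2)\to SO(2l+2)\times U(1))=\{\pm(1,1)\}$.

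After trivialising $P_{U(1)}$ and the associated principal $Spin(2l+2)$-bundle, this fibrewise discrepancy is a continuous map $D^{2l+2}\to \Zp 2$; by connectedness it is globally constant, and its value at the fixed point $0\in D^{2l+2}$ determines the lift. Hence there are at most two inequivalent invariant lifts, which together with the explicit constructions via $J$ and its $\delta$-twist yields exactly two equivalence classes on $D^{2l+2}$. Matching these against the two invariant $Spin^c$-structures on $S^{2l+1}$, the restriction map must be bijective.

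The main obstacle I anticipate is the fibrewise comparison in the second ingredient: one must pin down a common reference trivialisation so that the difference of two equivariant lifts is an honestly defined continuous map into the discrete group $\{\pm(1,1)\}$, rather than merely a local object. Once this reduction is made, connectedness of $D^{2l+2}$ together with the presence of the fixed point $0$ closes the argument cleanly.
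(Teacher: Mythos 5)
Your strategy---reduce the lemma to counting invariant topological $Spin^c$-structures on $D^{2l+2}$, show there are at most two, and match them against the two on $S^{2l+1}$---is sound and in the spirit of the paper, which treats the lemma as following directly from the $\delta$-twist discussion in the preceding paragraph. Your first ingredient (uniqueness of the underlying non-equivariant $Spin^c$-structure on $D^{2l+2}$) is correct.

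There is, however, a genuine gap in the second ingredient, close to the concern you flag but misdiagnose as a trivialisation problem. You assert that two equivariant refinements are ``two lifts of the action on $P_{SO(2l+2)}\times P_{U(1)}$'' and hence differ fibrewise by an element of $\ker(Spin^c(2l+2)\to SO(2l+2)\times U(1))=\Zp 2$. But the $\Zp 2$-action on $P_{U(1)}$ is not part of the fixed data; as the paper notes in Section \ref{section spinc manifolds}, it is \emph{induced} by the action on $P$, and a priori different equivariant refinements can induce different actions on $P_{U(1)}$. Only the action on $P_{SO(2l+2)}$ is given, so two refinements $\tau,\tau'$ differ by a gauge map $\lambda\colon D^{2l+2}\to\ker(Spin^c\to SO)=U(1)$ satisfying $\lambda(-x)\lambda(x)=1$, not automatically by a $\Zp 2$-valued constant. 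To recover your count you must argue that, modulo equivalence of invariant structures (which absorbs $\lambda$ of the form $\mu(x)^{-1}\mu(-x)$), only the constant $\pm 1$-valued $\lambda$ survive---equivalently, that the induced $\Zp 2$-action on $P_{U(1)}$ is forced. This is true: every involution in $Spin^c(2l+2)$ covering $-\id\in SO(2l+2)$ at the fixed point $0$ has the same square of its $U(1)$-component (namely the value of $\omega^2=\pm 1$ for the volume element $\omega\in Spin(2l+2)$), so the action on $P_{U(1)}$ at $0$, and hence on the equivariantly contractible $D^{2l+2}$, is determined. Without this observation your argument only bounds the number of refinements for each possible $U(1)$-action; once it is supplied, the proof closes and yields the desired bijection with the invariant structures on $S^{2l+1}$.
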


The lemma will be used in Section \ref{section 4k+1} to extend a given invariant $Spin^c$-structure over a plumbing construction.

\subsection{Dirac operators} 
We recall the Dirac operator of a $Spin^c$-manifold (see for example \cite[D.9]{LM89} and \cite[\S 3.2]{DG21} for details).

For $M$ a $k$-dimensional oriented Riemannian manifold with $Spin^c$-structure $P\to M$, one has  the spinor bundle $S(M)=P\times _{Spin^c(k)}\triangle \to M$ where $\triangle$ denotes the restriction of $\triangle _{2l}$ or $\triangle _{2l+1}^{\pm}$ to $Spin^c(k)$ for $k=2l$ or $k=2l+1$, respectively. The Levi-Civita connection of $M$ together with a chosen unitary connection $\nabla ^c $ on $P_{U(1)}\to M$ determine a connection $\nabla$ on $S(M)$. In this situation there is an associated $Spin^c$-Dirac operator $D_M$, acting on sections of $S(M)$,
\begin{equation}\label{spinc Dirac operator}
D_M:\Gamma (S(M))\to \Gamma (S(M)\otimes T^*M)\to  \Gamma (S(M)\otimes TM)\to \Gamma (S(M)),\end{equation}
where the first map is the connection $\nabla$, the second map uses the isomorphism given by the metric $g$, and the last map is induced from Clifford multiplication.

If $k$ is even, the decomposition $\triangle _k=\triangle _k^+\oplus \triangle _k^-$ as $\C l_k^0$-representation yields a decomposition $S(M)=S^+(M)\oplus S^-(M)$ and operators $D_M^{\pm}:\Gamma (S^{\pm}(M))\to \Gamma (S^{\mp}(M))$.

Given a Hermitian complex vector bundle $E\to M$ with a Hermitian connection $\nabla ^E$, one can consider the twisted Dirac operator ${\Diractwist M E}:\Gamma(S(M)\otimes E)\to \Gamma(S(M)\otimes E)$
essentially defined as before.

If $M$ is closed, then the operator above is elliptic of first order and its index can be computed from the symbol via the Atiyah-Singer index theorem. In cohomological terms one obtains the following formula which we state for further reference (see \cite[\S 5]{ASIII68}, \cite[Thm. D.15]{LM89}):
\begin{equation}\label{a hat equation twisted}\ind \, D_{M,E}^{+} = \langle\hat  {\cal A}(TM)e^{c/2}ch(E),[M]\rangle,\end{equation}
where $\hat  {\cal A}(TM)$ denotes the $\hat  {\cal A}$-series for the Pontryagin classes of the tangent bundle, $c$ is the first Chern class of the $Spin^c$-structure, $ch(E)$ is the Chern character of $E$, and $\langle \quad , [M]\rangle $ denotes evaluation on the fundamental cycle. If $c$ is a torsion class and the operator is untwisted (i.e., $E$ is a trivial complex vector bundle of rank one), then the formula simplifies to
\begin{equation}\label{a hat equation}\ind \, D_M^{+} = \langle\hat  {\cal A}(TM),[M]\rangle=:\hat A (M).\end{equation}

\subsection{The APS index theorem}
Next, we recall the Atiyah-Patodi-Singer index theorem for $Spin^c$-manifolds with boundary. Our exposition is based on \cite{APSI75,APSII75} and follows closely \cite[\S 3.2]{DG21}.

We first recall the eta invariant of a self-adjoint elliptic operator which measures the spectral asymmetry of the operator. The eta function is defined by the series
\begin{equation}\label{EQ: eta definition}
\eta(z):=\sum_\lambda \frac{\sign(\lambda)}{\vert\lambda \vert^{z}}, \quad \text{ for } z\in\C ,
\end{equation}
where the sum is over all the non-zero eigenvalues $\lambda$ of the operator (taken with multiplicity). Recall from \cite[\S 2]{APSI75} that this series converges absolutely for real part $\Re (z)\gg 0$ and extends in a unique way to a meromorphic function on $\C$, also denoted by $\eta$, which is holomorphic at $z=0$. The eta invariant of the operator is defined by $\eta(0)$.

Let $(W,g_W)$ be an even-dimensional compact $Spin^c$-manifold with non-empty boundary $\partial W=M$ such that the metric $g_W$ is of product type near the boundary. Let $g_M$ denote the induced metric on $M$. We fix a connection $\nabla ^c $ on the associated principal $U(1)$-bundle over $W$ which is constant in the normal direction near the boundary.

Recall that the $Spin^c$-Dirac operator (\ref{spinc Dirac operator}) on $W$ restricts to an operator 
$\Gamma (S^+(W))\to \Gamma (S^-(W))$ which will be denoted by  ${\mathfrak D}^+$. The restriction of $S^\pm (W)$ to the boundary $M$ can be identified with the spinor bundle $S(M)$. The restriction of ${\mathfrak D}^+$ can be identified with the $Spin^c$-Dirac operator $D_M:\Gamma (S(M))\to \Gamma (S(M))$ on $(M,g_M)$ \wrt \ the induced $Spin^c$-structure and connection on $M$. In a suitable collar of $M$ the operator ${\mathfrak D}^+$ takes the form ${\mathfrak D}^+=\sigma (\frac \partial {\partial u}+D_M)$ where $u$ is the normal coordinate and $\sigma=\sigma _{{\mathfrak D}^+}(du)$ is the bundle isomorphism given by the symbol $\sigma _{{\mathfrak D}^+}$ of ${\mathfrak D}^+$.

After imposing the Atiyah-Patodi-Singer boundary condition, the operator ${\mathfrak D}^+$ has finite dimensional kernel and will be denoted by $D_W^+$. Similarly, its formal adjoint, subject to the adjoint APS boundary condition, has finite dimensional kernel and will be denoted by $(D_W^+)^*$.

The index of $D_W^+$ is defined as $\ind \, D_W^+:= \dim\ker \, D_W^+ - \dim\ker \,  (D_W^+)^* \in \Z $. By the APS index theorem (see \cite{APSI75,APSII75}):
\begin{equation}\label{EQ: APS}
\ind \, D_W^+ = \left ( \int _W \hat{{\cal A}}(W,g_W)e^{\frac{1}{2}c} \right ) - \frac{h(D_M,g_M) + \eta(D_M,g_M)}{2},
\end{equation}
where $\hat{{\cal A}}(W,g_W)$ is the $\hat {\cal A}$-series evaluated on the Pontryagin forms $p_i(W,g_W)$, $c$ is the first Chern form of
$\nabla ^c $, $h(D_M,g_M)$ denotes the dimension of the kernel of
$D_M$, and $\eta(D_M,g_M)$ is the eta invariant of the self-adjoint operator $D_M$. Note that $\eta (M,g_M):=\frac{h(D_M,g_M) + \eta(D_M,g_M)}{2}$ only depends on data of the boundary $M$, namely the metric, the connection, and the  $Spin^c$-structure of $M$.

For $E\to W$ a Hermitian complex vector bundle with a Hermitian connection $\nabla ^E$ which is constant in the normal direction near the boundary, the operator can be twisted by $E$. The twisted operator ${\DiractwistAPS W E}$ has finite index and the APS index formula \eqref{EQ: APS} takes the form
\begin{equation}\label{EQ: APS twisted}
\ind \, {\DiractwistAPS W E}  = \left ( \int _W \hat{{\cal A}}(W,g_W) e^{\frac{1}{2}c} ch(E,\nabla ^E) \right ) - \frac{h_{E}(D_M,g_M) + \eta_E(D_M,g_M)}{2},
\end{equation}
where $ch(E,\nabla ^E)$ is the Chern character for the Chern forms of $(E,\nabla ^E)$, $h_{E}(D_M,g_M)$ is the dimension of the kernel of the twisted $Spin^c$-Dirac operator ${\Diractwist M {E\vert_M}}$ on $(M,g_M)$, and $\eta_E(D_M,g_M)$ is the eta invariant of ${\Diractwist M {E\vert_M}}$. Let $\eta _E(M,g_M):=\frac{h_{E}(D_M,g_M) + \eta_E(D_M,g_M)}{2}$. Formula (\ref{EQ: APS twisted}) may be extended to virtual bundles. Note that $\eta _E(M,g_M)=-\ind \, {\DiractwistAPS W E} \in \Z $ if the term involving the integration vanishes, e.g., if $E=E_1-E_2$ where $E_1$ and $E_2$ are flat bundles of equal rank.

\subsection{Eta invariants on connected components of  $\mathcal{R}_{scal>0}$}\label{eta section}

In the presense of positive scalar curvature, certain terms in formulas (\ref{a hat equation twisted})--(\ref{a hat equation}) and  (\ref{EQ: APS})--(\ref{EQ: APS twisted}) may vanish due to the Schr\"odinger-Lichnerowicz argument. More precisely, one has (see \cite{L63}, \cite{ASIII68}, \cite[\S 3]{APSII75}, \cite[p. 398, II.8.17 and Appendix D]{LM89}, \cite[\S 3.2]{DG21}).

\begin{theorem}\label{Dirac injective}
\begin{enumerate}
\item Let $(M,g_M)$ be a closed $Spin^c$-manifold and $\nabla ^c $ a connection on the associated principal $U(1)$-bundle. If $(M,g_M)$ has positive scalar curvature and $\nabla ^c $ is flat,
 then the $Spin^c$-Dirac operator $D_M$ is injective. In particular, $\hat A (M)=0$. Corresponding statements hold true if the operator is twisted with a flat bundle $E$.
\item Let $(W,g_W)$ be a compact even-dimensional $Spin^c$-manifold with boundary $M$. Suppose $g_W$ is of product type near the boundary and $\nabla ^c $ is a connection of the associated principal $U(1)$-bundle which is constant in the normal direction near the boundary. If $(W,g_W)$ has positive scalar curvature and $\nabla ^c $ is flat,
 then the index of the Dirac-operator $D_W^+$ and $h(D_M,g_M)$ both vanish.  Corresponding statements hold true if the operator is twisted with a flat bundle $E$.\qed
\end{enumerate}
\end{theorem}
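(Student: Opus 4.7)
The proof is a direct application of the Schrödinger--Lichnerowicz (Bochner-type) formula for the $Spin^c$-Dirac operator, the classical argument of Lichnerowicz adapted to the $Spin^c$ setting. The central identity is
$$D_M^2 \;=\; \nabla^* \nabla \;+\; \tfrac{1}{4}\,\text{scal}_{g_M} \;+\; \tfrac{1}{2}\,c(\Omega^c),$$
where $\Omega^c$ denotes the curvature two-form of $\nabla^c$ acting on spinors via Clifford multiplication. Under the assumption that $\nabla^c$ is flat, the last term vanishes identically, and one obtains a pointwise Weitzenböck identity controlled entirely by scalar curvature.

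For part (1), I would take $\psi \in \ker D_M$ on the closed manifold $M$, pair the identity above with $\psi$, and integrate to obtain
$$0 \;=\; \int_M |D_M \psi|^2 \;=\; \int_M |\nabla \psi|^2 \;+\; \tfrac{1}{4}\int_M \text{scal}_{g_M}\cdot |\psi|^2.$$
Since $\text{scal}_{g_M} > 0$ pointwise, both summands on the right are non-negative and the second is strictly positive unless $\psi \equiv 0$, so $\ker D_M = 0$. In even dimension, combined with the corresponding injectivity of the formal adjoint $D_M^-$, the index formula (\ref{a hat equation}) yields $\hat A(M) = \mathrm{ind}\, D_M^+ = 0$; in odd dimension $\hat A(M)$ vanishes for degree reasons. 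The twisted case is identical after inserting the extra curvature term $c(\Omega^E)$ into the Weitzenböck identity, which likewise vanishes when $E$ is flat.

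For part (2), I would apply the same pointwise identity on $W$ and integrate by parts against $\psi \in \ker D_W^+$ satisfying the APS boundary condition. The key technical point---and the main obstacle---is the boundary term that appears in the integration by parts: using the product structure of $g_W$ near $\partial W$ and the symbol decomposition ${\mathfrak D}^+ = \sigma(\partial_u + D_M)$ already recorded in the excerpt, one verifies that the APS spectral projection annihilates precisely this boundary pairing (this is the content of \cite[\S 3]{APSII75}, see also \cite[p.~398, II.8.17]{LM89}). One then concludes
$$0 \;=\; \|D_W^+ \psi\|^2 \;=\; \|\nabla \psi\|^2 \;+\; \tfrac{1}{4}\int_W \text{scal}_{g_W}\cdot |\psi|^2,$$
forcing $\psi = 0$ by positivity of scalar curvature. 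The same reasoning applied to $(D_W^+)^*$ with the adjoint APS condition gives $\mathrm{ind}\, D_W^+ = 0$. Finally, for the vanishing of $h(D_M, g_M)$ I would note that the product-type assumption forces $\text{scal}_{g_M} = \text{scal}_{g_W}|_M > 0$ and that the flatness of $\nabla^c$ is preserved under restriction, so part (1) applies directly to the boundary operator $D_M$. The twisted case proceeds by the same scheme, with the observation that constancy of $\nabla^E$ in the normal direction ensures the boundary operator is $D_{M, E|_M}$ with a flat connection on $E|_M$.
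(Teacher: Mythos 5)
The paper does not supply a proof of this theorem; it is cited from the literature (Lichnerowicz, Atiyah--Singer III, APS II, Lawson--Michelsohn). Your sketch reproduces the standard Schr\"odinger--Lichnerowicz argument from those sources and the overall structure is sound: the flatness of $\nabla^c$ (and of $E$) kills the curvature terms in the Weitzenb\"ock identity, reducing everything to the scalar curvature estimate, and the passage from injectivity of $D_M$ to $\hat A(M)=0$ via \eqref{a hat equation} in even dimensions (degree reasons in odd dimensions) is correct.

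One point in part (2) is stated imprecisely and would mislead a reader if left as written: the APS boundary condition does \emph{not} annihilate the boundary term arising in the integration by parts. Integrating the (flat) Weitzenb\"ock identity $0=\nabla^*\nabla\psi+\tfrac14\,\mathrm{scal}\,\psi$ against $\psi\in\ker D_W^+$ gives
$$\int_W|\nabla\psi|^2+\tfrac14\int_W\mathrm{scal}\,|\psi|^2=\int_{\partial W}\langle\nabla_\nu\psi,\psi\rangle,$$
with $\nu$ the outward normal, and the right-hand side is not zero a priori. What the APS condition supplies is a \emph{sign}: in the product collar, $D_W^+\psi=0$ and the spectral projection force $\psi=\sum_{\lambda<0}a_\lambda e^{-\lambda u}\phi_\lambda$ (with $u$ the inward normal coordinate), so
$$\int_{\partial W}\langle\nabla_\nu\psi,\psi\rangle=-\sum_{\lambda<0}(-\lambda)\,|a_\lambda|^2\,\|\phi_\lambda\|^2\le 0.$$
Since the left-hand side is $\ge 0$ and, because $\mathrm{scal}_{g_W}>0$ everywhere, strictly positive unless $\psi\equiv 0$, one concludes $\psi=0$; the vanishing of the boundary term is the \emph{conclusion}, not an input. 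Thus your displayed equality ``$0=\|\nabla\psi\|^2+\tfrac14\int_W\mathrm{scal}\,|\psi|^2$'' should be derived via this inequality rather than by dropping the boundary term. This is precisely the computation in \cite[\S 3]{APSII75}; the remainder of your sketch, including the reduction of $h(D_M,g_M)=0$ to part (1) via $\mathrm{scal}_{g_M}=\mathrm{scal}_{g_W}|_M>0$, is correct.
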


As pointed out in \cite[p. 417]{APSII75}, Theorem \ref{Dirac injective} can be used to define invariants which are constant on connected components of $\mathcal{R}_{scal>0}(M)$.

Let $M$ be a closed odd-dimensional manifold. Suppose $M$ has a $Spin^c$-structure with first Chern class a torsion class. We fix a flat connection on the associated principal $U(1)$-bundle. Let $E$ be a zero-dimensional virtual bundle over $M$ with flat connection.

\begin{corollary}\label{eta sspinc constant} Let $g_0$ and $g_1$ be two metrics of positive scalar curvature on $M$. Let $\eta _0$ (resp. $\eta _1$) denote the eta invariant of the Dirac operator $\Gamma (S(M)\otimes E)\to \Gamma (S(M)\otimes E)$ defined by the connections above and the metric $g_0$ (resp. $g_1$). 

If the metrics $g_0$ and $g_1$ are in the same connected component of $\mathcal{R}_{scal>0}(M)$, then $\eta _0=\eta _1$.
\end{corollary}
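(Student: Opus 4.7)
The plan is to apply the APS index theorem for twisted $Spin^c$-Dirac operators (formula (\ref{EQ: APS twisted})) to the cylinder $W := M \times [0,T]$ equipped with a carefully chosen $scal>0$ metric which interpolates between $g_0$ and $g_1$, and then to use Theorem \ref{Dirac injective} to make both the index term and the integral vanish.

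First I would choose a smooth path $t \mapsto g_t$, $t\in[0,1]$, in $\mathcal{R}_{scal>0}(M)$ connecting $g_0$ and $g_1$. Such a path exists because $g_0$ and $g_1$ lie in the same connected component and $\mathcal{R}_{scal>0}(M)$ is locally path-connected (as an open subset of $\mathcal{R}(M)$). Next, I would use the standard Gromov-Lawson stretching trick: for a smooth reparametrization function $\phi: [0,T]\to [0,1]$ which is $0$ near $0$, $1$ near $T$, and monotone in between, consider the metric $g_W := du^2 + g_{\phi(u)}$ on $W$. The scalar curvature of this metric is of the form $\mathrm{scal}(g_{\phi(u)}) + O(\vert\phi'\vert + \vert\phi''\vert)$, so by choosing $T$ large enough and $\phi$ with sufficiently small derivatives, $g_W$ has $scal>0$ everywhere, and by construction it is of product type near both boundary components, restricting to $g_0$ on $M\times\{0\}$ and $g_1$ on $M\times\{T\}$.

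The $Spin^c$-structure and the fixed flat connection $\nabla^c$ on the associated principal $U(1)$-bundle over $M$ pull back via the projection $W\to M$ to a $Spin^c$-structure on $W$ with a flat connection, and similarly the virtual bundle $E$ with its flat connection pulls back to a flat virtual bundle of rank $0$ on $W$. In particular, the induced connections are constant in the normal direction near both boundary components. I would then apply the APS index theorem (\ref{EQ: APS twisted}) to the operator $D^+_{W,E}$. Orienting the boundary $\partial W$ as $\overline{M\times\{0\}}\,\sqcup\, M\times\{T\}$, the boundary contribution is $\eta_E(M,g_1) - \eta_E(M,g_0)$. Since $E$ is a virtual bundle of rank $0$ with flat connection, its Chern character form $ch(E,\nabla^E)$ vanishes identically on $W$, hence the integrand $\hat{{\cal A}}(W,g_W)\,e^{c/2}\,ch(E,\nabla^E)$ is identically zero and the integral vanishes.

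Finally, since $g_W$ has positive scalar curvature and the connections on the $U(1)$-bundle and on $E$ are flat, Theorem \ref{Dirac injective}(2) implies $\mathrm{ind}\,D^+_{W,E}=0$, and Theorem \ref{Dirac injective}(1), applied to $(M,g_0)$ and $(M,g_1)$, gives $h_E(D_M,g_0)=h_E(D_M,g_1)=0$. Thus in the notation of (\ref{EQ: APS twisted}) the quantities $\eta_E(M,g_i)$ reduce to $\eta_i/2$, and the APS formula collapses to $\eta_1 - \eta_0 = 0$, yielding the claim. The only delicate step is the construction of the positive scalar curvature metric on $W$ of product type near the boundary; this is the Gromov-Lawson stretching argument, which is standard but needs to be invoked carefully to guarantee both the curvature sign and the product structure simultaneously.
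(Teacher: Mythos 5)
Your proof is correct and follows essentially the same route as the paper: stretch a path of $scal>0$ metrics to a $scal>0$ cylinder metric on $M\times[0,T]$ of product type near the boundary, pull back the $Spin^c$-data and flat bundle $E$, apply the APS formula (\ref{EQ: APS twisted}), and invoke Theorem \ref{Dirac injective} to kill both the index and the kernel term. The only cosmetic difference is that you spell out why the integral term vanishes (flat rank-$0$ virtual bundle forces $ch(E,\nabla^E)=0$) and derive $h_E=0$ from part (1) on the two boundary slices rather than from part (2) directly; the paper's argument packages both vanishings into Theorem \ref{Dirac injective}.2.
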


\noindent
\begin{proof}  Let $g_0$ and $g_1$ be $scal>0$ metrics on $M$ which are in the same connected component. Then there is a smooth family $g_t$, $t\in [0,1]$, of $scal>0$ metrics on $M$ connecting $g_0$ and $g_1$ which is constant near the boundary of $[0,1]$. By stretching the interval sufficiently, one obtains a $scal>0$ metric $g_W$ on $W:=M\times [0,L]$, $L\gg 0$, which is of product type near the boundary and restricts to $g_0$ and $g_1$ at the boundary (see \cite[Lemma 3]{GL80II}).

We pull back the principal $U(1)$-bundle and its flat connection to $W$ and consider the $Spin^c$-structure on $W$ defined by the $Spin^c$-structure on $M$ the metric $g_W$ and the principal $U(1)$-bundle. We pull back the virtual bundle $E$ and its flat connection to $W$ and consider the associated twisted Dirac operator $\DiractwistAPS W E$ on $W$. Since $g_W$ has positive scalar curvature, it follows from Theorem \ref{Dirac injective}.2, the assumption on $E$, and (\ref{EQ: APS twisted}) that
$$ 0=\ind \, {\DiractwistAPS W E}  = \left ( \int _W \hat{{\cal A}}(W,g_W)  e^{\frac{1}{2}c} ch(E,\nabla ^E) \right ) - \frac{h_{E}(D_{\partial W},g_{\partial W}) + \eta_E(D_{\partial W},g_{\partial W})}{2}=-\frac {\eta_E(D_{\partial W},g_{\partial W})}{2}.$$
Since $\eta_E(D_{\partial W},g_{\partial W})=\eta _1-\eta _0$, the corollary follows.
\end{proof}

\subsection{Fixed point formula} For the computation of eta invariants we will use Donnelly's fixed point formula \cite{D78}.
We first recall a basic relationship between eta invariants of a twisted operator and equivariant untwisted eta invariants. We restrict to the case of interest (for the general setting see for example \cite[\S 3.3]{DG21} and references therein).

Let $M\to \overline M$ be a twofold covering map of connected closed manifolds and $\alpha :\Zp 2\to U(m)$ a unitary representation. We denote the associated flat vector bundle $M \times _{\Zp 2} \C ^m\to \overline M$ by $\alpha$ as well.

Let ${\cal D}_{\overline M}$ be a self-adjoint elliptic operator on $\overline M$ and ${\cal D}_M$ a $\Zp 2$-equivariant self-adjoint elliptic operator on $M$ covering ${\cal D}_{\overline M}$ (e.g., ${\cal D}_{\overline M}$ is a $Spin^c$-Dirac operator on $\overline M$ and ${\cal D}_M$ is its equivariant lift to $M$).

Note that each eigenspace of the equivariant operator ${\cal D}_M$ is a finite-dimensional $\Zp 2$-representation. The eta invariant of ${\cal D}_M$ refines to an equivariant eta invariant $\eta _{{\cal D}_M}:\Zp 2 \to \R$ (see \cite[\S 2]{APS73}, \cite[(2.13) on p. 412]{APSII75}, \cite{D78} for details).

Let ${\cal D}_{\overline M,\alpha}$ denote the operator ${\cal D}_{\overline M}$ twisted with $\alpha $ and $\eta _\alpha $ the eta invariant of ${\cal D}_{\overline M,\alpha}$. In this situation one has the following formula (see \cite[Thm. 3.4]{D78}, \cite[p. 390]{BGS97}):
\begin{equation}\label{EQ: Donnelly covering}
\eta _\alpha = \frac 1 2 \left ( \eta _{{\cal D}_M} + \eta _{{\cal D}_M}(\tau)\cdot \chi _\alpha (\tau)\right ),
\end{equation}
where $\chi _\alpha:\Zp 2\to\C$ denotes the character of the representation $\alpha$ and $\tau$ is the non-trivial element of $\Zp 2$.
Formula (\ref{EQ: Donnelly covering}) extends to virtual representations.

For $\alpha $ the non-trivial one-dimensional representation and  $\tilde \alpha :=\alpha -\mathbb {1}$ ($\mathbb {1}$ denotes the trivial complex one-dimensional representation), one obtains 
\begin{equation}\label{EQ: twisted eta}
\eta _{\tilde \alpha }= -\eta _{{\cal D}_M}(\tau).\end{equation}
 The equivariant eta invariant can be computed in situations where $M$ bounds a suitable manifold $W$.

Let $W$ be a compact manifold with boundary $M$. Suppose the $\Zp 2$-action extends smoothly to $W$. Let ${\cal D}_W$ be an equivariant elliptic operator on $W$. Suppose that on a suitable collar of $M$ the operator ${\cal D}_W$ is given by $\sigma (\frac \partial {\partial u}+{\cal D}_M)$ where $\sigma=\sigma _{{\cal D}_W}(du)$. Then Donnelly's equivariant extension of the APS index theorem takes the following form.

\begin{theorem}\label{Donnelly}
$\ind ({\cal D}_W)(\tau)+\frac 1 2 (h_\tau +\eta _{{\cal D}_M}(\tau))=\sum _{N\subset W^{\tau}}a_N$.\qed
\end{theorem}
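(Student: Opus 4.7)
The plan is to prove this equivariant APS index theorem by following the classical heat-equation strategy of Atiyah-Patodi-Singer, refined by the equivariant supertrace/Mathai-Quillen localization ideas, exactly as in Donnelly's paper. Let ${\cal D}_W^+:={\cal D}_W$ subject to the APS boundary condition and ${\cal D}_W^-$ its formal adjoint with the adjoint APS boundary condition. The starting identity is the equivariant McKean--Singer formula: for every $t>0$,
\begin{equation*}
\ind({\cal D}_W)(\tau)=\tr\bigl(\tau \,e^{-t{\cal D}_W^-{\cal D}_W^+}\bigr)-\tr\bigl(\tau\, e^{-t{\cal D}_W^+{\cal D}_W^-}\bigr),
\end{equation*}
which holds because $\tau$ commutes with ${\cal D}_W^\pm$, so nonzero eigenspaces of ${\cal D}_W^-{\cal D}_W^+$ and ${\cal D}_W^+{\cal D}_W^-$ of a given eigenvalue are isomorphic as $\Zp 2$-representations and cancel in the difference of equivariant traces.

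Next I would analyse the right-hand side as $t\to 0^+$. The APS heat kernel splits, up to exponentially small terms, as an interior parametrix built from the local heat kernel of the closed double, plus a boundary correction coming from solving the Cauchy problem for $\partial_u + {\cal D}_M$ on the half-cylinder $M\times[0,\infty)$. The equivariant trace is therefore the sum of two pieces. For the interior piece, one uses the standard fact that $\tr(\tau\, e^{-t\Delta})$ localizes on the fixed point set $W^\tau$ as $t\to 0$, because the Schwartz kernel at a point $x\notin W^\tau$ is paired with its $\tau$-translate which lies at positive distance, killing the contribution exponentially. The remaining local expansion on each component $N\subset W^\tau$ is a universal polynomial in curvature forms of $W$, of the normal bundle of $N$, and of the twisting data, whose $t^0$-coefficient is precisely Donnelly's characteristic number $a_N$. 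This yields the sum $\sum_{N\subset W^\tau}a_N$ in the limit.

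The boundary piece is handled as in \cite{APSI75}, but with $\tau$ inserted in the trace. The half-cylinder model reduces to studying $\tr\bigl(\tau\, e^{-tD_M^2}\bigr)$ integrated against an explicit kernel, and a Mellin transform argument converts this, in the $t\to 0$ limit, into $-\tfrac 1 2\bigl(h_\tau+\eta_{{\cal D}_M}(\tau)\bigr)$: the $h_\tau$ term arises from the $\tau$-trace on $\ker D_M$ (present only because the APS projection includes half the zero modes), while the $\eta_{{\cal D}_M}(\tau)$ term is produced by the spectral asymmetry of $D_M$ weighted by its $\tau$-action, exactly as in the non-equivariant case but with $\sign(\lambda)/|\lambda|^s$ replaced by $\chi_\lambda(\tau)\sign(\lambda)/|\lambda|^s$. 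Combining the interior and boundary small-time asymptotics with the constancy of the left-hand side in $t$ gives the stated formula.

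The main obstacle is the technical passage from the equivariant McKean--Singer identity to the split interior/boundary asymptotics with the APS projection in place: one must justify that the $\tau$-weighted APS heat kernel has a well-defined small-$t$ expansion in which the interior part localizes on $W^\tau$ (in particular, $W^\tau$ meets $\partial W$ only transversally if at all, and any components lying in the boundary have to be treated carefully) while the boundary correction is expressible purely in terms of the tangential operator $D_M$. Once that analytic separation is in place, the computation of the two limits is, modulo bookkeeping, a direct equivariant upgrade of the Atiyah--Patodi--Singer calculation.
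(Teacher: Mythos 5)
Your proposal is a correct sketch of Donnelly's original heat-equation proof of the equivariant APS index theorem, which is precisely the source the paper cites for this result. The paper offers no proof of its own here: Theorem~\ref{Donnelly} is stated with a \qed as a direct citation of Donnelly's paper \cite{D78}, so your argument coincides with the approach behind the cited reference.
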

Here,  $\ind ({\cal D}_W)(\tau)$ and $h_\tau$ denote the index of the equivariant operator ${\cal D}_W$ and the $\Zp 2$-representation $\ker {\cal D}_M$ evaluated at $\tau$, respectively, and $a_N$ is the local contribution in the Lefschetz-Fixed-Point formula of the equivariant symbol of ${\cal D}_W$ at the component $N\subset W^{\tau}$.

Note that the integral term in the non-equivariant APS index theorem depends on the metric whereas the right hand side of the formula in Theorem \ref{Donnelly} does not.

As an illustration, we will apply the formula for a twisted $Spin^c$-Dirac operator on real projective space  (see
%\cite{G85} and 
 \cite[Prop. 3.5]{DG21} for details).

Let $W=D^{2n}$, $M=S^{2n-1}=\partial W$, and $M\to \overline M:=\R P^{2n-1}$ the universal covering map. The group $\Zp 2$ acts by $\pm \id$ on $W$ with one fixed point $pt$ and freely on $M$ with quotient $\overline M$.
We equip $W$, $M$, and $\overline M$ with the topological $Spin^c$-structures induced by the standard complex structure on $\C ^n$.

\begin{examples}\label{eta examples}
Let $g_W$ be an invariant metric on $W=D^{2n}$ which is of product type near the boundary and let $g_{\overline M}$ be the induced metric on $\overline M=\R P^{2n-1}$. Let ${\cal D}_W$ be the equivariant $Spin^c$-Dirac operator on $W$ defined by the complex structure on $\C ^n$, the metric $g_W$, and a fixed invariant flat unitary connection on the determinant line bundle. Then the local contribution $a_{pt}$ at the fixed point in Theorem \ref{Donnelly} is equal to $2^{-n}$ (see for example \cite{DG21}) .

Let $\eta _{\tilde \alpha}({\cal D}_{\overline M},g_{\overline M})$ denote the eta invariant of the $Spin^c$-Dirac operator ${\cal D}_{\overline M}$ on $\overline M$ twisted with the virtual representation $\tilde \alpha :=\alpha -\mathbb {1}$ where $\alpha :\Zp 2\to \C$ is the non-trivial representation. Then $\eta _{\tilde \alpha}(\overline M,g_{\overline M})=\frac{h_{\tilde \alpha}({\cal D}_{\overline M},g_{\overline M}) + \eta_{\tilde \alpha}({\cal D}_{\overline M},g_{\overline M})}{2}\equiv -2^{-n}\bmod \Z$.
If $g_W$ has positive scalar curvature, then $\eta _{\tilde \alpha}({\cal D}_{\overline M},g_{\overline M})= -2^{-n+1}$.
\end{examples}

\section{Proof of Theorems A and B and generalizations}\label{section 4k+3}
In this section we exhibit a large class of manifolds of dimension $4k+3\geq 7$ that carry infinitely many different geometries of positive Ricci curvature. These manifolds arise as the boundary of certain spin manifolds with nice metric coordinate function (see Theorem \ref{AB*}). Special cases of this result are Theorem A and Theorem B stated in the introduction. The proof will involve the topological and geometric properties of plumbings discussed in Section \ref{section topological plumbing} and Section \ref{section geometric plumbing} as well as the basic index difference which we will recall below.

The construction of different geometries of positive scalar curvature via plumbing goes back to Gromov, Lawson, and Carr. Their method applies to spin manifolds $M$ of dimension $4k+3\geq 7$ which admit a $scal>0$ metric (see \cite[IV, \S 7]{LM89} and references therein for details). Given two $scal >0$ metrics $g_0,g_1$ on $M$, Gromov and Lawson define an invariant of the pair $(g_0,g_1)$, the relativ index $i(g_0,g_1)\in \Z $, which vanishes if $g_0$ and $g_1$ can be connected by a path of $scal>0$ metrics.  We will only need this invariant in the following elementary situation:

Let $g_0$ and $g_1$ be $scal >0$ metrics on a closed $(4k+3)$-dimensional spin manifold $M$. Suppose for $i=0,1$ there exists a compact spin manifold $W_i$ with $\partial W_i=M$ and suppose $g_i$ extends to a $scal>0$ metric on $W_i$ which is of product type near the boundary. Then $i(g_0,g_1)=\hat A(W_1\cup _M(-W_2))$ and will be called the {\em basic index difference}.

Building on the work of Gromov and Lawson, the basic index difference has been used to show that for every closed spin manifold $M$ of positive scalar curvature and of dimension $4k+3\geq 7$ the space of $scal >0$ metrics has infinitely many components (see \cite[IV, \S 7]{LM89} for details). If, in addition, $p(M)=1\in H^*(M;\Q)$, i.e., all Pontryagin classes of $M$ are torsion classes, then the argument can be extended to show that $M$ carries infinitely many different geometries of positive scalar curvature, a result which was first shown by Kreck and Stolz using their $s$-invariant (see \cite[Cor. 2.15]{KS93}).
%. It follows that every homotopy sphere of dimension $4k+3\geq 7$ carries infinitely many different geometries of positive scalar curvature

%The same conclusion holds true if $M^{4k+3}$ is a spin manifold of positive scalar curvature for which $p(M)=1\in H^*(M;\Q)$, i.e., for which all Pontryagin classes are torsion classes (see also \cite[Cor. 2.15]{KS93}).
For spin manifolds of odd dimension $\geq 5$ with non-trivial fundamental group but unrestricted Pontryagin classes, many examples carrying infinitely many different geometries of positive scalar curvature are known (see the work of Botvinnik and Gilkey \cite{BG95} for first results in this context).

In \cite{W11} Wraith showed that every homotopy sphere $\Sigma$ of dimension $4k+3\geq 7$ which bounds a parallelizable manifold admits infinitely many different geometries of positive Ricci curvature. Wraith constructs $scal>0$ metrics on plumbings with boundary $\Sigma$ which restrict to $Ric>0$ metrics on the boundary (up to a deformation of the boundary metric within positive scalar curvature). He then uses Kreck-Stolz invariants to show that the $Ric>0$ metrics yield infinitely many different geometries of positive Ricci curvature on $\Sigma$ (see \cite{W11} for details).

Recently, Domange \cite{D24} has used the basic index difference to show that the conclusion also holds true if one considers other plumbings which are related to the Milnor pairing. We will briefly describe his work since it will be relevant for the proof of Theorem \ref{AB*}.

We first recall the Milnor pairing $\beta $ (see \cite{M59,M60} for details). For $\theta _1:S^{q-1}\to SO(p-1)$, let $E_1\to S^q$ be the disk bundle with characteristic map $\phi _1:=i\circ \theta _1:S^{q-1}\to SO(p-1)\hookrightarrow SO(p)$, where $i$ denotes the standard inclusion $SO(n)\hookrightarrow SO(n+1)$. The disk bundle has rank $p$ and a nowhere vanishing section. Its isomorphism type is determined by the homotopy class of $\theta _1$. Similarly, for $\theta _2:S^{p-1}\to SO(q-1)$, let $E_2\to S^p$ be the rank $q$ disk bundle with nowhere vanishing section defined by $\phi _2:=i\circ \theta _2:S^{p-1}\to SO(q-1)\hookrightarrow SO(q)$.

Milnor showed that the plumbing $W(\theta_1,\theta_2):=E_1\square E_2$ has as boundary a homotopy sphere $M(\theta_1,\theta_2):=\partial W(\theta_1,\theta_2)$  and that the map $(\theta_1,\theta_2)\mapsto M(\theta_1,\theta_2)$ defines a bilinear map
\begin{equation}\label{equation Milnor pairing 2}\beta : \pi _{q-1}(SO(p-1))\otimes \pi _{p-1}(SO(q-1))\to \Theta _{p+q-1}
\end{equation} into the group of diffeomorphism classes of oriented $(p+q-1)$-dimensional homotopy spheres.

Following Milnor, we consider the Pontryagin homomorphism $p_n:\pi _{4n-1}(SO(m))\to \Z $ which maps $[\phi ]\in \pi _{4n-1}(SO(m))$ to $p_n(\xi )\in H^{4n}(S^{4n};\Z )\cong \Z $ where $\xi$ is the vector bundle over $S^{4n}$ of rank $m$ defined by the characteristic map $\phi$. If $m< 2n$, then $p_n$ is trivial for dimensional reasons. If $m=2n$, then $p_n$ is trivial since $p_n$ is equal to the square of the Euler class of $\xi $.

Let $p=4s$, $q=4t$. In the following we will assume that  $1\leq s\leq t<2s$. In this range it is known that the Pontryagin homomorphisms $p_s:\pi _{4s-1}(SO(4t-1))\to \Z $ and $ p_t:\pi _{4t-1}(SO(4s-1))\to \Z $ are non-trivial.

Suppose $[\Sigma ]=\beta (\theta_1,\theta_2)= \beta(\theta_1^\prime,\theta _2^\prime )$, i.e., $M(\theta_1,\theta_2)=\partial W(\theta_1,\theta_2)$  and $M(\theta_1^\prime,\theta_2^\prime)=\partial W(\theta_1^\prime,\theta_2^\prime)$ can be identified with the homotopy sphere $\Sigma $ by orientation preserving diffeomorphisms.\footnote{Abusing notation we use the same notation for a map and its homotopy class.}
We fix such identifications and consider the oriented closed manifold $Y=W(\theta_1,\theta_2)\cup _{\Sigma } -W(\theta_1^\prime ,\theta_2^\prime )$ obtained by gluing $W(\theta_1,\theta_2)$ and $-W(\theta_1^\prime ,\theta_2^\prime )$ along their boundary \wrt \ the chosen identifications. Note that $Y$ is a spin manifold of dimension $4(s+t)=p+q$ with vanishing signature and that the Pontryagin classes of $Y$ are trivial except maybe in degree $4s, 4t$, and $4(s+t)$. A computation shows that the $\hat A$-genus of $Y$ takes the form
$$\hat A(Y)=c(s,t)(p_s(\theta_1)p_t(\theta_2)- p_s(\theta_1^\prime) p_t(\theta_2^\prime)),$$
where $c(s,t)$ is a non-zero constant which only depends on $s$ and $t$ (see \cite{M59,M60,D24} for details). In particular, $\hat A(Y)$ does not depend on the chosen identifications.

Since the group of oriented homotopy spheres $\Theta_{k}$ is finite for $k\neq 4$ and the Pontryagin homomorphisms $p_s$ and $p_t$ are both non-trivial, it follows that for any $[\Sigma ]$ in the image of the Milnor pairing there are $(\theta_1^i, \theta _2^i)\in \beta ^{-1}([\Sigma ])$, $i\in \N $, such that the manifolds $Y_{(i,j)}:=W(\theta_1^i,\theta_2^i)\cup _{\Sigma } -W(\theta_1^j ,\theta_2^j )$, $i,j\in \N $,  have non-vanishing $\hat A$-genus if $i\neq j$.

We are now in the position to prove the main theorem of this section.

\begin{theorem}\label{AB*} Let $V$ be a smooth spin manifold with boundary which has a nice metric coordinate function $\psi: D^p\times D^q\hookrightarrow V$ with $\{p,q\}= \{4s, 4t\}$, $1\leq s\leq t<2s$. Suppose all Pontryagin classes of $\partial V$ are torsion classes, i.e., $p(\partial V)=1\in H^*(\partial V;\Q )$. Then $\partial V$ admits infinitely many different geometries of positive Ricci curvature. 
\end{theorem}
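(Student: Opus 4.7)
The plan is to realize $\partial V$ as the boundary of infinitely many different spin manifolds carrying $scal>0$ metrics that restrict to $Ric>0$ metrics on the boundary, and then to distinguish the resulting boundary metrics in the moduli space via the basic index difference of Gromov--Lawson, using the hypothesis $p(\partial V)=1$ in an essential way.

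The topological input comes from the Milnor pairing (\ref{equation Milnor pairing 2}): since $1\leq s\leq t<2s$, both Pontryagin homomorphisms $p_s$ and $p_t$ are non-trivial, so I can pick $\theta_1\in\pi_{q-1}(SO(p-1))$ and $\theta_2\in\pi_{p-1}(SO(q-1))$ with $p_s(\theta_1)p_t(\theta_2)\neq 0$. The class $\beta(\theta_1,\theta_2)$ has finite order $N$ in the finite group $\Theta_{p+q-1}$, so by bilinearity of $\beta$ the scaled pairs $(\theta_1^i,\theta_2^i):=(iN\theta_1,\theta_2)$ satisfy $\beta(\theta_1^i,\theta_2^i)=[S^{p+q-1}]$, while the products $p_s(\theta_1^i)p_t(\theta_2^i)=iN\,p_s(\theta_1)p_t(\theta_2)$ are mutually distinct. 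Writing $W_i:=W(\theta_1^i,\theta_2^i)$, we have $\partial W_i\cong S^{p+q-1}$ and, as reviewed before the theorem, $\hat A(W_i\cup_{S^{p+q-1}}(-W_j))\neq 0$ for $i\neq j$.

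Next I would perform the generalized plumbing. Let $\mathcal{D}:=(S^q\times D^p)\square(S^p\times D^q)$ be the bridge of Lemma~\ref{lemma 2 connected sum} and define $\mathcal{P}_i:=V\square\mathcal{D}\square W_i$, where $V$ is plumbed to $\mathcal{D}$ via the nice metric coordinate function $\psi$ and $W_i$ is plumbed to $\mathcal{D}$ via a coordinate function of one of its disk bundles. By Lemma~\ref{lemma 2 connected sum} there is an oriented diffeomorphism $\partial\mathcal{P}_i\cong\partial V\sharp\partial W_i=\partial V\sharp S^{p+q-1}=\partial V=:M$, the identity outside a disk. Since $V$ is spin and the disk bundles in $\mathcal{D}$ and $W_i$ are spin, each $\mathcal{P}_i$ is spin. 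By Remark~\ref{remark plumbing nice coordinate function} together with Theorem~\ref{plumbing theorem}, $\mathcal{P}_i$ carries a metric $g_{\mathcal{P}_i}$ of positive scalar curvature, of product type near the boundary, whose restriction $g_i$ to $\partial\mathcal{P}_i\cong M$ has positive Ricci curvature.

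To show that the $g_i$ represent infinitely many different geometries on $M$, I would apply the basic index difference argument of Carr--Gromov--Lawson (cf.\ \cite[IV, \S 7]{LM89}). For $i\neq j$ the closed spin manifold $Z_{ij}:=\mathcal{P}_i\cup_M(-\mathcal{P}_j)$ admits a smoothed $scal>0$ metric, so the Schr\"odinger--Lichnerowicz formula forces $\hat A(Z_{ij})$ to equal the basic index difference $i(g_i,g_j)\in\mathbb Z$. The key computation is the identity
\begin{equation*}
\hat A(Z_{ij}) \;=\; \hat A(W_i\cup_{S^{p+q-1}}(-W_j)),
\end{equation*}
which by the first step gives $i(g_i,g_j)\neq 0$ and in fact unbounded values. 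The identity expresses that, inside $Z_{ij}$, the two copies of $V$ glued along all of $\partial V$ outside a disk, together with the two bridges $\mathcal{D}$, form a spin subdomain which is null-cobordant rel its inner boundary $S^{p+q-1}$, so the characteristic numbers of $Z_{ij}$ reduce to those of the Milnor piece. Finally, if $[g_i]=[g_j]$ in $\mathcal{M}_{Ric>0}(M)$ then Ebin's slice theorem yields $\phi\in\text{Diff}(M)$ with $g_i$ and $\phi^*g_j$ in the same component of $\mathcal{R}_{scal>0}(M)$, so $i(g_i,g_j)=\hat A(\mathcal{P}_i\cup_\phi(-\mathcal{P}_j))=0$. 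The hypothesis $p(M)=1\in H^*(M;\mathbb Q)$ forces $\phi^*$ to act trivially on $H^{4\ast}(M;\mathbb Q)$, so the Pontryagin numbers of $\mathcal{P}_i\cup_\phi(-\mathcal{P}_j)$ coincide with those of $Z_{ij}$ and $\hat A(Z_{ij})=0$, contradicting the preceding unboundedness for infinitely many pairs $(i,j)$.

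The main obstacle will be the identity $\hat A(Z_{ij})=\hat A(W_i\cup(-W_j))$. Formalizing the ``doubled-$V$ plus bridges'' cancellation requires either an explicit spin null-cobordism rel boundary for the relevant subdomain of $Z_{ij}$, or a direct computation of the Pontryagin numbers of $\mathcal{P}_i$ that uses the rational triviality of $p(\partial V)$ to rule out cross terms between the characteristic classes of $V$ and those of the Milnor plumbings $W_i$.
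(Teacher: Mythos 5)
Your overall strategy (Milnor pairing, bridge plumbing from Lemma~\ref{lemma 2 connected sum}, Theorem~\ref{plumbing theorem}, basic index difference) matches the paper's, but the final index-theoretic step contains a genuine gap that the paper avoids by a different construction.

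The gap is exactly the one you flag at the end: the identity $\hat A(Z_{ij})=\hat A(W_i\cup(-W_j))$. You define $Z_{ij}:=\mathcal{P}_i\cup_M(-\mathcal{P}_j)$ by gluing the \emph{entire} plumbings, so $Z_{ij}$ contains two copies of $V$, two bridges $\mathcal{D}$, and the Milnor pieces, glued by whatever diffeomorphism $\phi$ Ebin's theorem gives you. There is no reason the contributions of the two $V$'s and the bridges cancel in $\hat A$: the two copies of $V$ are not glued by the identity along $\partial V$ but by $\phi$ (restricted off a disk), so the ``doubled-$V$'' subdomain is \emph{not} a standard double of $V$, and ``null-cobordant rel inner boundary'' is not automatic. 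Moreover, your claim that $p(M)=1\in H^*(M;\Q)$ forces $\phi^*$ to act trivially on $H^{4*}(M;\Q)$ is false: vanishing of rational Pontryagin classes says nothing about the action of $\phi^*$ on rational cohomology. This is precisely where the hypothesis must be used more carefully.

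The paper sidesteps the doubled-$V$ problem by \emph{never} gluing $\mathcal{P}_i$ to $-\mathcal{P}_j$. Instead it observes (from the proofs of Lemma~\ref{lemma 2 connected sum} and Theorem~\ref{plumbing theorem}) that, under the diffeomorphism $\partial V\sharp \partial W_i\to\partial\mathcal{P}_i$, the boundary metric $\partial g_i$ is isotopic \emph{within} $scal>0$ to a connected-sum metric $\partial g_V\sharp\partial g_{W_i}$, where $\partial g_V$ is independent of $i$ and $\partial g_{W_i}$ extends over $W_i$ to a $scal>0$ metric of product type. One then replaces $\mathcal{P}_i$ by the far smaller spin filling $Z_i:=(\partial V\times[0,1])\,\sharp_\partial\, W_i$, normalizes the gluing diffeomorphism $F$ using the disk theorem to the form $F_{\partial V}\sharp\,\mathrm{id}$ (possible because $\partial W_i$ is a sphere, and after passing to the finite-index subgroup of spin-preserving diffeomorphisms), and identifies $Z_{(i,j)}:=Z_i\cup(-Z_j)$ with the connected sum of the mapping torus $M_{F_{\partial V}}$ and $W_i\cup_{\mathrm{id}}(-W_j)$. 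The hypothesis $p(\partial V)=1$ then enters to prove $\hat A(M_{F_{\partial V}})=0$, via representing the degree-$<4(s+t)$ Pontryagin classes of the mapping torus by forms supported away from the gluing locus, together with a cut-and-paste (signature) argument for the top class. This mapping-torus argument is the correct use of the hypothesis and is what your ``$\phi^*$ acts trivially'' shortcut would need to become.
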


\begin{proof} We assume that $(p,q)=(4s,4t)$ (the case $(p,q)=(4t,4s)$ is analogous). 

If $(\theta_1, \theta_2)$ is in the kernel of the Milnor pairing $\beta $
and $E_1\to S^{4s}$ and $E_2\to S^{4t}$ are the disk bundles associated to $\phi_1=i\circ \theta_1$ and $\phi_2=i\circ \theta_2$, respectively, then the  boundary of $E_1\square E_2$ is diffeomorphic to the standard sphere. In this situation we will identify $\partial (E_1\square E_2)$ with $S^{4s+4t-1}$ via a fixed orientation preserving diffeomorphism (the choice of the diffeomorphism will not affect the subsequent reasoning).

We fix elements $(\theta_1^i, \theta_2^i)$, $i\in \N$, in the kernel of
$\beta : \pi _{4s-1}(SO(4t-1))\otimes \pi _{4t-1}(SO(4s-1))\to \Theta _{4(s+t)-1}$,
such that the $\hat A$-genus of
$$Y_{(i,j)}:=W_i\cup _{S^{4s+4t-1}} -W_j$$ is non-zero if $i\neq j$.

Here, $W_i$ denotes the plumbing $W(\theta_1^i,\theta_2^i)=E_1^i\square E_2^i$ where $E_1^i\to S^{4s}$ and $E_2^i\to S^{4t}$ are the disk bundles associated to $i\circ \theta_1^i$ and $i\circ \theta_2^i$, respectively, and the boundary $\partial W_i$ has been identified with $S^{4s+4t-1}$.
Note that $W_i$, $W_j$ and $Y_{(i,j)}$, being oriented and $2$-connected, have unique spin structures.

We will use the plumbing construction considered in Lemma \ref{lemma 2 connected sum}. Let $\cal D$ be the plumbing  of trivial disk bundles,  ${\cal D}:=(S^{4t}\times D^{4s})\square (S^{4s}\times D^{4t})$, and let $\varphi _1,\varphi _2: D^{4t}\times D^{4s}\hookrightarrow S^{4t}\times D^{4s}\subset {\cal D}$ denote disjoint orientation preserving coordinate functions. Let $\psi_1:=\psi: D^{4s}\times D^{4t}\hookrightarrow V$ and let $\psi _2: D^{4s}\times D^{4t}\hookrightarrow W_i$ be defined by an orientation preserving coordinate function of $E_1^i$.

Consider the plumbing ${\cal P}_i$ obtained by plumbing $V$ and $W_i$ to $\cal D$ \wrt \ $(\psi _1,\varphi _1)$ and $(\psi _2,\varphi _2)$.
By Theorem \ref{plumbing theorem}, ${\cal P}_i$ has a metric $g_i$ of positive scalar curvature which is of product type near the boundary and which restricts to a metric $\partial g_i$ of positive Ricci curvature on the boundary. By Lemma \ref{lemma 2 connected sum}, the boundary $\partial {\cal P}_i$ is diffeomorphic to $\partial V\sharp \partial W_i=\partial V\sharp S^{4s+4t-1}$. Below we will use the basic index difference to show that the metrics $\partial g_i$, $i\in \N $, define different geometries of positive Ricci curvature on $\partial V$.

Recall from the proof of Lemma \ref{lemma 2 connected sum} that the diffeomorphism $\partial V\sharp \partial W_i\cong \partial {\cal P}_i$ is induced by an isotopy of $\partial {\cal D}$ (which is independent of $V$ and $W_i$) and that the diffeomorphism may be chosen to be the identity on $\partial V\setminus \psi _1(D^{4s}\times S^{4t-1})$ and $\partial W_i\setminus \psi _2(D^{4s}\times S^{4t-1})$.

It follows from the proof of Theorem \ref{plumbing theorem} and the above that the pullback of $\partial g_i$ under the diffeomorphism $ \partial V\sharp \partial W_i\to \partial {\cal P}_i$ is isotopic as $scal>0$ metric to a metric $\partial g_V\sharp \partial g_{W_i}$ with the following properties:

\begin{itemize}
\item $\partial g_V\sharp \partial g_{W_i}$ is the connected sum of $scal >0 $ metrics $\partial g_V$ and $\partial g_{W_i}$, where
\item $\partial g_V$ is a metric on $\partial V$, which can be chosen to be independent of $i$ as long as $i$ belongs to a finite index set, and
\item $\partial g_{W_i}$ is a metric on $\partial W_i$, which extends to a $scal >0 $ metric $g_{W_i}$ on $W_i$, and which is of product type near the boundary.
\end{itemize}

Let $Z_i:=(\partial V \times [0,1])\sharp_\partial W_i$ where the boundary connected sum is taken \wrt \  $\partial V \times \{1\}$ and $\partial W_i$. The product metric $\partial g_V +ds^2$ on $\partial V \times [0,1]$ and the metric $g_{W_i}$ on $W_i$ define a $scal >0$ metric $\hat g_i$ on $Z_i$ (unique up to isotopy) which extends $\partial g_V\sharp \partial g_{W_i}$, is of product type near the boundary, and restricts to $\partial g_V +ds^2$ on $\partial V \times [0,1/2]$.

Now suppose $\partial g_i$ and $\partial g_j$ represent elements in the same connected component of ${\cal M}_{Ric>0}(\partial V)$. Then $\partial g_V\sharp \partial g_{W_i}$ and $\partial g_V\sharp \partial g_{W_j}$ represent elements which are connected by a path in ${\cal M}_{scal>0}(\partial V)$. It follows from Ebin's slice theorem that the path can be lifted to a path of $scal>0$ metrics connecting $\partial g_V\sharp \partial g_{W_i}$ with $F^*(\partial g_V\sharp \partial g_{W_j})$ for some diffeomorphism $F:\partial V\sharp \partial W_i\to \partial V\sharp \partial W_j$ (see Section \ref{moduli}). 

For the proof of Theorem \ref{AB*}, we may assume that $F$ preserves the spin structure since the subgroup of diffeomorphisms preserving the spin structure has finite index in the full diffeomorphism group of $\partial V$  (see Section \ref{moduli}). Since $\partial W_i$ and $\partial W_j$ are spheres, it follows from the disk theorem that $F$ is isotopic to a diffeomorphism, which respects the connected sum decompositions, and is the identity on the second component.

Hence, after replacing $F$ by this diffeomorphism, we may assume that there exists a path of $scal>0$\linebreak metrics, constant near the end points, connecting $\partial g_V\sharp \partial g_{W_i}$ with $F^*(\partial g_V\sharp \partial g_{W_j})$ for a spin preserving diffeomorphism $F=F_{\partial V}\sharp F_{(i,j)}:\partial V\sharp \partial W_i\to \partial V\sharp \partial W_j$. Here, $F_{\partial V}$ is a diffeomorphism of $\partial V$ which is the identity on the disk used in the construction of the connected sum and $F_{(i,j)}$ is the identity map (recall that $\partial W_i=S^{4s+4t-1}=\partial W_j$).
We now adapt the reasoning given in \cite[Thm. 7.7]{LM89}. Let 
$$Z_{(i,j)}:=Z_i\cup  -Z_j$$
 be the closed spin manifold obtained by gluing $(\partial V \times \{0\})\subset Z_i$ to $-(\partial V \times \{0\})\subset -Z_j$ via  the identity and by gluing $(\partial V \times \{1\})\sharp \partial W_i$ to $-(\partial V \times \{1\})\sharp \partial W_j$ via $F=F_{\partial V}\sharp F_{(i,j)}$.
 
Note that $Z_{(i,j)}$ is diffeomorphic to the connected sum of the mapping torus $M_{F_{\partial V}}$ of $F_{\partial V}:\partial V\to \partial V$ and the closed manifold $W_i\cup _{F_{(i,j)}} -W_j$. Hence, $\hat A(Z_{(i,j)})=\hat A (M_{F_{\partial V}})+\hat A(W_i\cup _{F_{(i,j)}} -W_j)$.

\bigskip
\noindent
\underline {Claim:} $\hat A(Z_{(i,j)})=0$.
\bigskip

\noindent
\underline {Proof:} Consider a path of $scal>0$ metrics, constant near the end points, connecting $\partial g_V\sharp \partial g_{W_i}$ with $F^*(\partial g_V\sharp \partial g_{W_j})$. The path defines a metric on the cylinder $(\partial V\sharp \partial W_i)\times [0,L]$ which has positive scalar curvature after stretching the interval $[0,L]$ (see \cite[Lemma 3]{GL80II}). This $scal >0$ metric together with the metric $\hat g_i$ on $Z_i=(\partial V \times [0,1])\sharp_\partial W_i)$ and the metric $\hat g_j$ on $Z_j=(\partial V \times [0,1])\sharp_\partial W_j$ define a $scal>0$ metric on $Z_{(i,j)}$. Since $Z_{(i,j)}$ is spin, it follows that $\hat A(Z_{(i,j)})=0$ (see Theorem \ref{Dirac injective}.1). \hfill $\Diamond$

\bigskip
\noindent
\underline {Claim:} $\hat A(M_{F_{\partial V}})=0$.
\bigskip

\noindent
\underline {Proof:} This follows from $p(\partial V)=1\in H^*(\partial V;\Q )$ by adapting the reasoning in \cite[p. 968]{M59}. We give a proof using differential forms. Since the real Pontryagin classes of $\partial V$ vanish, we can represent the real Pontryagin classes of $M_{F_{\partial V}}=(\partial V\times [0,1])/((x,0)\sim (F_{\partial V}(x),1))$ of degree $<4(s+t)$ by closed forms which vanish near the boundary of $\partial V\times [0,1]$. Hence, the diffeomorphism $F_{\partial V}$ is irrelevant for the computation of the mixed Pontryagin numbers of $M_{F_{\partial V}}$.

It follows that the mixed Pontryagin numbers of $M_{F_{\partial V}}$ and of the trivial mapping torus $\partial V\times S^1$ coincide (in fact, they all vanish). The same is true for the top Pontryagin number (the one which corresponds to the Pontryagin class of $M_{F_{\partial V}}$ of degree $4(s+t)$) since the signature is a cut and paste invariant. Hence, all Pontryagin numbers coincide and $\hat A(M_{F_{\partial V}})=\hat A(\partial V\times S^1)=0$. \hfill $\Diamond$

\bigskip
\noindent 
By the claims above, $\hat A(Y_{(i,j)})=\hat A(W_i\cup _{F_{(i,j)}} -W_j)=\hat A(Z_{(i,j)})-\hat A (M_{F_{\partial V}})=0$. Since $\hat A(Y_{(i,j)})$ is non-zero for $i\neq j$, it follows that $\partial g_i$ and $\partial g_j$ define different geometries of positive Ricci curvature on $\partial V$ for $i\neq j$.

\end{proof}

\begin{remark}\label{remark mapping torus}
Dropping the condition on the Pontryagin classes, the proof of Theorem \ref{AB*} still shows that the space of metrics of positive Ricci curvature ${\cal R}_{Ric>0}(\partial V)$ has infinitely many connected components.
The proof also shows that $\partial V$ admits infinitely many different geometries of positive Ricci curvature if one replaces the condition $p(\partial V)=1\in H^*(\partial V;\Q )$ by the (potentially) weaker assumption that the $\hat A$-genus of the mapping torus vanishes for every spin preserving diffeomorphism $f:\partial V\to \partial V$ which maps the $scal >0$ concordance class of $\partial g_V\sharp \partial g_{W_i}$ to the one of $\partial g_V\sharp \partial g_{W_j}$.
\end{remark}

The next theorems give applications of Theorem \ref{AB*} to certain disk bundles and plumbings.

\begin{theoremA}  Let $B$ be a $p$-dimensional closed Riemannian manifold of positive Ricci curvature, $\xi $ a vector bundle of rank $q$ over $B$, and $M$ the total space of the associated sphere bundle. Suppose $M$ is spin, all Pontryagin classes of $M$ are torsion classes, and $\{p,q\}= \{4s, 4t\}$, $1\leq s\leq t<2s$.  Then $M$ admits infinitely many different geometries of positive Ricci curvature.
\end{theoremA}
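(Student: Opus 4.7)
The plan is to derive Theorem A as a direct application of Theorem \ref{AB*}, taking $V$ to be the total space of the closed disk bundle associated to $\xi$. The three hypotheses to be checked are: (i) $V$ is spin, (ii) $V$ has a nice metric coordinate function $\psi : D^p \times D^q \hookrightarrow V$ with $\{p, q\} = \{4s, 4t\}$, and (iii) $p(\partial V) = 1 \in H^*(\partial V;\Q)$. Condition (iii) is given in the hypotheses. Condition (ii) follows immediately from Remark \ref{nice coordinate function remark}, since $\xi$ has rank $q \geq 4$ and $B$ admits a positive Ricci metric; the domain of the coordinate function is precisely $D^{\dim B} \times D^{\mathrm{rank}\,\xi} = D^p \times D^q$, matching the required shape.

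The only step requiring actual computation is (i), which I expect to be the main (though still routine) obstacle. I would proceed as follows. Because $V$ deformation retracts onto the zero section $B$ and $TV|_B = TB \oplus \xi$, we have $w_i(V) = w_i(TB \oplus \xi)$ for every $i$. For $M = \partial V$, the splitting $TM = T^vM \oplus \pi^*TB$ together with $T^vM \oplus \underline{\R} = \pi^*\xi$ yields $w_i(M) = \pi^* w_i(V)$. The Gysin sequence for the $S^{q-1}$-bundle $\pi : M \to B$ contains the exact piece $H^{i-q}(B;\Z_2) \to H^i(B;\Z_2) \xrightarrow{\pi^*} H^i(M;\Z_2)$, and since $q \geq 4$ the left-hand group vanishes for $i \leq 2$; hence $\pi^*$ is injective in these degrees. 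Applied in degree $1$, orientability of $M$ forces orientability of $V$ (and thus of $B$ and of $\xi$). Applied in degree $2$, the vanishing $w_2(M) = 0$ forces $w_2(V) = 0$, so $V$ is spin.

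With (i)--(iii) in place, Theorem \ref{AB*} applies directly and produces infinitely many different geometries of positive Ricci curvature on $\partial V = M$, completing the proof. The rank hypothesis $q \in \{4s, 4t\}$ with $s,t \geq 1$ plays a double role: it guarantees $q \geq 4$ (needed for both the Gysin-sequence injectivity above and for Remark \ref{nice coordinate function remark}), and it supplies the precise dimension pairing $\{p, q\} = \{4s, 4t\}$ with $1 \leq s \leq t < 2s$ required as input to Theorem \ref{AB*}.
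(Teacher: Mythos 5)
Your argument is correct and follows the paper's proof exactly: set $V$ to be the disk bundle of $\xi$, use Remark \ref{nice coordinate function remark} for the nice metric coordinate function, and deduce $V$ spin from $w_1(M)=w_2(M)=0$ together with injectivity of $H^i(V;\Zp 2)\to H^i(M;\Zp 2)$ for $i\le 2$ via the mod-$2$ Gysin sequence, then invoke Theorem \ref{AB*}. The one parenthetical slip — that $w_1(V)=0$ ``thus'' forces orientability of $B$ and of $\xi$ — is not actually implied (it only gives $w_1(TB)=w_1(\xi)$), but that remark plays no role in the argument.
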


\begin{proof} Let $V$ be the total space of the disk bundle associated to $\xi$. Since $q\geq 4$ and since $w_i(TV)$, $i=1,2$, restricted to $M=\partial V$ vanishes, it follows from the Gysin sequence that $V$ is spin. By Remark \ref{nice coordinate function remark}, $V$ has a nice metric coordinate function $\psi: D^p\times D^q\hookrightarrow V$. Since $p(M)=1\in H^*(M;\Q )$, the statement follows from Theorem \ref{AB*}.\end{proof}

For a trivial bundle, Theorem A gives the following corollary.
\begin{corollary}\label{Cor 1 Thm A} A product manifold $B^p\times S^{q-1}$ admits infinitely many different geometries of positive Ricci curvature if $B$ is a closed spin manifold of positive Ricci curvature, $p(B)=1\in H^*(B;\Q )$, and $\{p,q\}= \{4s, 4t\}$, $1\leq s\leq t<2s$.\qed
\end{corollary}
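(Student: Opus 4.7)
The plan is to deduce this corollary directly from Theorem A by taking $\xi$ to be the trivial rank-$q$ real vector bundle over $B$. Then the associated sphere bundle is precisely the product $M = B \times S^{q-1}$, and the dimension condition $\{p,q\} = \{4s, 4t\}$ with $1 \leq s \leq t < 2s$ is inherited verbatim. It therefore suffices to check the two remaining hypotheses of Theorem A: that $M$ is spin and that all rational Pontryagin classes of $M$ vanish.

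For the spin condition, writing $TM \cong \pi_B^* TB \oplus \pi_S^* TS^{q-1}$ and using the Whitney formula modulo $2$, I would note $w_1(M) = 0$ (since both factors are orientable) and $w_2(M) = \pi_B^* w_2(B) + \pi_B^* w_1(B) \smile \pi_S^* w_1(S^{q-1}) + \pi_S^* w_2(S^{q-1})$. Both $B$ and $S^{q-1}$ are spin (the sphere is stably parallelizable, and for $B$ this is a hypothesis), so every term vanishes. Thus $M$ is spin.

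For the Pontryagin classes, the product formula and the Künneth theorem give $p(M) = \pi_B^* p(B) \smile \pi_S^* p(S^{q-1})$ in $H^*(M; \mathbb{Q})$. By hypothesis $p(B) = 1$, and $p(S^{q-1}) = 1$ since $S^{q-1}$ is stably parallelizable, so $p(M) = 1 \in H^*(M; \mathbb{Q})$. With all hypotheses verified, Theorem A applies and yields infinitely many different geometries of positive Ricci curvature on $B \times S^{q-1}$. There is no real obstacle here; the content of the corollary is simply the observation that a product $B \times S^{q-1}$ is the sphere bundle of a (trivial) rank-$q$ vector bundle and that the spin and rational-Pontryagin hypotheses of Theorem A are automatically inherited from the corresponding hypotheses on $B$.
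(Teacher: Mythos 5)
Your proof is correct and follows exactly the paper's route: the corollary is stated there simply as the specialization of Theorem A to the trivial rank-$q$ bundle over $B$, and your verification that $M = B \times S^{q-1}$ is spin and has $p(M) = 1 \in H^*(M;\mathbb{Q})$ (via Whitney formulas and stable parallelizability of $S^{q-1}$) is precisely the routine check the paper leaves implicit.
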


Recall from the proof of Theorem \ref{AB*} that geometries are distinguished by using the basic index difference to show that corresponding metrics cannot be connected by $scal>0$ metrics. As a consequence, the geometries in the corollary cannot be constructed by taking product metrics since two such metrics can always be connected by $scal>0$ metrics using suitable shrinking of the factors in the product manifold.

To illustrate Theorem A further, we point out some special $7$-dimensional cases.

\begin{corollary}\label{Cor 2 Thm A} Let $\xi$ be a vector bundle of rank $4$ over a closed $4$-dimensional base manifold $B$. Then the total space $S(\xi)$ of the associated sphere bundle admits infinitely many different geometries of positive Ricci curvature in the following cases:
\begin{enumerate}
\item $e(\xi)\neq 0$, $S(\xi)$ is spin, and $B$ admits a metric of positive Ricci curvature (e.g., $B$ is equal to $S^4$, $S^2\times S^2$, $\pm \C P^2$, or a connected sum of copies of these).
\item $B$ is a spin manifold which admits a metric of positive Ricci curvature and $\xi$ is spin with $p_1(\xi)=0$.
\end{enumerate}
\end{corollary}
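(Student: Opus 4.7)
The plan is to reduce both cases to Theorem A with $p=q=4$, for which $s=t=1$ satisfies the inequality $1\leq s\leq t<2s$. All the hypotheses of Theorem A are built into Corollary \ref{Cor 2 Thm A} except possibly that $M=S(\xi)$ is spin and that all its rational Pontryagin classes vanish, so these are the two things to verify.

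The key bundle-theoretic input I would use in both cases is the vertical/horizontal splitting of $TM$: writing $\pi:M\to B$ for the sphere bundle projection, one has $TM\oplus \underline{\R}\cong \pi^*(TB\oplus \xi)$. Consequently $w_2(TM)=\pi^*(w_2(TB)+w_2(\xi))$ and, in rational cohomology, $p(TM)=\pi^*p(TB\oplus\xi)$. Since $\dim M=7$, the only rational Pontryagin class to control is $p_1(M)=\pi^*(p_1(TB)+p_1(\xi))\in H^4(M;\Q)$.

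For Case (1), spinness of $M$ is given, so it remains to check that $p_1(M)=0$ rationally. I would invoke the Gysin sequence for the oriented rank-$4$ sphere bundle $S^3\to M\to B$. Since $B$ is a closed oriented $4$-manifold, $H^4(B;\Q)\cong \Q$, and the hypothesis $e(\xi)\neq 0$ makes $\cup e(\xi):H^0(B;\Q)\to H^4(B;\Q)$ surjective; by exactness of the Gysin sequence, $\pi^*:H^4(B;\Q)\to H^4(M;\Q)$ is the zero map, so $p_1(M)=\pi^*(p_1(TB)+p_1(\xi))=0$.

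For Case (2), $M$ is spin since $w_2(TM)=\pi^*(w_2(TB)+w_2(\xi))=0$, both $B$ and $\xi$ being spin. The only delicate point is that $p_1(TB)=0$ in $H^4(B;\Q)\cong \Q$; this comes from the Lichnerowicz formula, which forces $\hat A(B)=0$ on a closed spin manifold of positive scalar curvature, and in dimension four this is equivalent to $p_1(B)[B]=0$, hence $p_1(TB)=0$ rationally. Combined with the standing assumption $p_1(\xi)=0$, one gets $p_1(M)=\pi^*(p_1(TB)+p_1(\xi))=0$, and Theorem A applies. No step looks like a genuine obstacle; the only thing to notice is that in Case (2) the geometric hypothesis on $B$ does double duty, supplying both the Ricci input needed by Theorem A and, via Lichnerowicz, the topological input that $p_1(B)$ is torsion.
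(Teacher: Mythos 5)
Your proof is correct and follows the paper's own route: reduce to Theorem A with $p=q=4$, check spinness of $M$, and verify the rational vanishing of $p_1(M)$ via the Gysin sequence in Case (1) and via the Lichnerowicz vanishing $\hat A(B)=0$ together with $p_1(\xi)=0$ in Case (2). The only minor variation is in Case (1): you observe that $\pi^*$ kills $H^4(B;\Q)$ and combine this with $TM\oplus\underline{\R}\cong\pi^*(TB\oplus\xi)$, whereas the paper records the stronger fact $H^4(S(\xi);\Q)=0$, which additionally uses $H^1(B;\Q)=0$ (automatic here by Bonnet--Myers since $B$ carries $Ric>0$); both deliver exactly the input Theorem A needs.
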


\begin{proof} Applying the Gysin sequence of $S(\xi)\to B$, one sees that $H^4(S(\xi);\Q)=0$ if $e(\xi)\neq 0$. Hence, the first statement follows from Theorem A. Note that the connected sums of $S^4$, $S^2\times S^2$, and $\pm \C P^2$ admit $Ric>0$ metrics by \cite{SY93}.

For the second statement, note that if $B$ is spin and admits a $scal>0$ metric, then $\hat A(B)=0$ (see Theorem \ref{Dirac injective}) and, hence,  $p_1(B)=0$. Since the tangent bundle of $S(\xi)$ is stably isomorphic to the pullback of $TB\oplus \xi$, all assumptions of Theorem A are satisfied and the statement follows.
\end{proof}

\begin{remark}\label{remark rp4}
Theorem A extends (essentially by the same proof) to the more general situation where the spin condition on $M$ is replaced by the condition that $M$ admits a $Spin^c$-structure with first Chern class a torsion class. By applying the extended version and arguing as above one sees, for example, that 
the total space $S(T\R P ^4)$ of the sphere tangent bundle of the real projective space $\R P^4$ admits infinitely many different geometries of positive Ricci curvature (note that $\R P^4$ has a metric of positive sectional curvature and the manifold $D(T \R P^4 )$ is almost complex with vanishing first Pontryagin class).
\end{remark}

We next consider the plumbing $W=\square _{\Gamma}E_\vvv$ of disk bundles according to a tree $\Gamma$. Here, $E_\vvv$ denotes the total space of a linear disk bundle over the sphere $S^p$ (resp. $S^q$) of rank $q$ (resp. $p$) associated to a vertex $\vvv$ of $\Gamma $ and two disk bundles are plumbed together if the respective vertices are connected by an edge of $\Gamma $. Let $\xi $ be a vector bundle of rank $q$ over a $p$-dimensional closed Riemannian manifold $B$ of positive Ricci curvature such that the total space of the associated sphere bundle is spin. Let $E$ denote the total space of the disk bundle associated to $\xi$. Consider the plumbing of $E$ and $W$ \wrt \ some disk bundle of $W$.

\begin{theoremB} Let $M$ be the boundary of the plumbing  $E\square W$. If $p(M)=1\in H^*(M;\Q )$ and $\{p,q\}= \{4s, 4t\}$, $1\leq s\leq t<2s$, then $M$ admits infinitely many different geometries of positive Ricci curvature.
\end{theoremB}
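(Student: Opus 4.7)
The plan is to reduce Theorem B directly to Theorem \ref{AB*} by showing that $V := E \square W$ plays the role of the cobounding manifold there. Once we verify the three hypotheses of Theorem \ref{AB*} --- namely, that $V$ is spin, that $V$ admits a nice metric coordinate function $\psi': D^p \times D^q \hookrightarrow V$ with $\{p,q\} = \{4s,4t\}$, and that $\partial V = M$ has all rational Pontryagin classes torsion --- the conclusion is immediate.

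First I would establish that $E$ itself admits a nice metric coordinate function. This is precisely the content of Remark \ref{nice coordinate function remark}: since $\xi$ is a disk bundle of rank $q \geq 4$ over a compact base $B$ carrying a metric of positive Ricci curvature, a nice metric coordinate function $\psi: D^p \times D^q \hookrightarrow E$ exists. Second, since $E \square W$ is obtained by plumbing $E$ to one of the disk bundles of $W$ (a tree of disk bundles over spheres $S^p$ or $S^q$, each of rank $\geq 3$), Remark \ref{remark plumbing nice coordinate function} applies: $E \square W$ inherits a nice metric coordinate function, coming for instance from a coordinate function of a disk bundle of $W$ at one of the auxiliary points $p_i$ used in the geometric construction of Theorem \ref{plumbing theorem}.

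Third, I would verify the spin condition on $E \square W$. The disk bundle $E$ is spin because its boundary $\partial E = S(\xi)$ is spin: applying the Gysin sequence of $\partial E \to B$ with fiber $S^{q-1}$ and using $q \geq 4$ gives $w_2(E) = 0$ (this is the same Gysin argument employed in the proof of Theorem A). For $E \square W$ to be spin one additionally needs each disk bundle of $W$ to be spin in a way compatible with the spin structure of $E$ across the plumbing site; this is implicit in the standing setup (equivalently, one reads into the hypothesis $p(M) = 1 \in H^*(M;\Q)$ that $M$ is spin, from which the spinness of the simply connected cobounding manifold follows uniquely). With all three hypotheses in place, Theorem \ref{AB*} applied to $V = E \square W$ yields that $M$ admits infinitely many different geometries of positive Ricci curvature.

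The main substantive content has already been absorbed into Theorem \ref{AB*}; the work for Theorem B is just the hypothesis transfer. The only point where one must be mildly careful is checking spin compatibility across the plumbing tree, which is handled by the uniqueness of spin structures on simply connected manifolds and by the standing assumptions on $S(\xi)$ and on $M$.
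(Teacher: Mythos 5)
Your overall plan is correct and matches the paper's: verify the three hypotheses of Theorem~\ref{AB*} for $V := E\square W$ (spinness, nice metric coordinate function, $p(\partial V)=1$) and then apply that theorem. Your treatment of the nice metric coordinate function via Remark~\ref{nice coordinate function remark} and Remark~\ref{remark plumbing nice coordinate function} is exactly right, and your Gysin-sequence argument that $E$ is spin is the one the paper uses.

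However, your verification of the spin condition on $E\square W$ goes wrong in two places. First, you hedge on why $W$ is spin and offer as an ``equivalent'' route the claim that $p(M)=1\in H^*(M;\Q)$ forces $M$ to be spin. This is false: vanishing of rational Pontryagin classes and vanishing of $w_2$ are unrelated conditions. The spinness of $M$ in this theorem is not read off from $p(M)=1$; rather, the setup preceding Theorem~B explicitly assumes $\partial E=S(\xi)$ is spin. Second, the assertion that ``spinness of the simply connected cobounding manifold follows uniquely'' from spinness of $M$ is also incorrect on two counts: $E\square W$ need not be simply connected (it deformation retracts onto $B\vee(\bigvee_\vvv S^{n_\vvv})$ and $B$ may have nontrivial $\pi_1$), and even a simply connected manifold with spin boundary need not be spin (e.g.\ $\C P^2$ minus an open disk). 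The missing ingredient is the observation the paper makes directly: $W$, being a plumbing of disk bundles over spheres $S^p,S^q$ with $p,q\geq 4$ according to a tree, is $3$-connected and therefore spin. Once both $E$ and $W$ are known to be spin, the spin structures agree over the contractible plumbing region, so $E\square W$ is spin. With that correction in place your reduction goes through as intended.
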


\begin{proof} By Remark \ref{remark plumbing nice coordinate function}, the plumbing $E\square W$  has a nice metric coordinate function. The plumbing $W$ is $3$-connected and, hence, spin. Note that $\partial E$ is assumed to be spin which is equivalent to $E$ being spin since  $q\geq 4$.  Applying Theorem \ref{AB*} to $V:=E\square W$ it follows that $M=\partial V= \partial (E\square W)$ admits infinitely many different geometries of positive Ricci curvature.\end{proof}

The condition on $p(M)$ is fulfilled if, for example, $E\square W$ is parallelizable or $M$ is a rational homology sphere. We note that for the plumbing of two disk bundles Theorem B gives back Domange's result \cite{D24}. 

\begin{remark}\label{remark core infty}
\begin{enumerate}
\item Theorem B also holds true if the base manifolds of the plumbing $W$ are core manifolds, $W$ is spin, $\{p,q\}= \{4s, 4t\}$, $1\leq s\leq t<2s$, and $p(M)=1\in H^*(M;\Q )$. %DETAILS?
\item  If the base manifold $B$ of $\xi $ admits a core metric then the theorem also follows from \cite[Theorem D]{B20} and \cite[Theorem B]{R23}.% GIVE MORE DETAILS!
\end{enumerate} 
\end{remark}

\section{Proof of Theorems C and D and generalizations}\label{section 4k+1}
In \cite{DG21} it is shown that in each dimension $4k+1\geq 5$ there are at least $2^{2k}$ homotopy real projectice spaces of pairwise distinct oriented diffeomorphism type which carry infinitely many different geometries of positive Ricci curvature. One way to construct these geometries is via equivariant geometric plumbing of disk tangent bundles $D(TS^{2k+1})$ using work of Wraith \cite{W11}.

In this section we use a similar but more general construction to exhibit large classes of manifolds in dimensions $4k+1\geq 5$ that carry infinitely many different geometries of positive Ricci curvature. These manifolds arise as $\Zp 2$-quotients of the boundary of manifolds with nice equivariant metric coordinate function. The main result of this section is Theorem \ref{CD*} below. Special cases are Theorems C and D from the introduction. The proofs will involve the topological, geometric, and equivariant properties of plumbings discussed in Sections 3--5. We will use eta invariants for $Spin^c$-Dirac operators (as discussed in Section \ref{index section}) to distinguish geometries of positive Ricci curvature. To do so, we need to consider manifolds with non-trivial fundamental group but do not need to impose restrictions on the Pontryagin classes. 

The results are to a certain extend similar in spirit to the ones from the previous section. However, the formulation of the statements is more involved.

The following conditions on a smooth $\Zp 2$-manifold $V$ and a neat embedding $\psi $ will be relevant.

\begin{condition}\label{conditions 4k+1}
\begin{enumerate}
\item $\Zp 2$ acts without fixed points on $M=\partial V$.
\item $\psi: D^p\times D^q \hookrightarrow V$ is a neat equivariant embedding where $\Zp 2$ acts on $D^p\times D^q$ by $\pm (\id,\id)$ and $p=q=2k+1\geq 3$.
\item $V$ has an invariant $Spin^c$-structure with flat associated principal $U(1)$-bundle.
\end{enumerate}
\end{condition}
Note that the flatness condition is equivalent to the condition that the first (integral) Chern class of the principal $U(1)$-bundle is torsion. The second condition implies that the action has at least one isolated fixed point.

\begin{theorem}\label{CD*} Let $V$ be a smooth $\Zp 2$-manifold with boundary $M$. Suppose $V$ has a nice  equivariant metric coordinate function $\psi: D^p\times D^q \hookrightarrow V$ and the three conditions given in \ref{conditions 4k+1} are satisfied. 

Then $M$ has a smooth free $\Zp 2$-action such that the quotient manifold $M/\Zp 2$ admits infinitely many different geometries of positive Ricci curvature.
\end{theorem}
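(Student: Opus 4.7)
The plan is to combine the equivariant plumbing construction of \cite{DG21} with the generalized equivariant plumbing of Theorem \ref{equivariant plumbing theorem}, and then to detect the resulting metrics on the quotient using Corollary \ref{eta sspinc constant} together with Donnelly's equivariant fixed-point formula.

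For each $i\in\mathbb{N}$ I would first produce, following \cite{DG21}, an equivariant linear plumbing $W_i$ of equivariant disk bundles over $S^{2k+1}$ of the type in Example \ref{equivariant bundle example}, with the following properties: (a) $\partial W_i$ is equivariantly diffeomorphic to the standard sphere $S^{4k+1}$ with the $\pm\id$ action (arranged by taking characteristic maps in the kernel of the Milnor pairing $\beta$); (b) $W_i$ admits an invariant $Spin^c$-structure with flat invariant connection on the associated principal $U(1)$-bundle (extended via Lemma \ref{lemma spinc on disk} using that each disk bundle summand has vanishing $H^2$); and (c) the number $n_i$ of isolated $\Zp 2$-fixed points of $W_i$ not used for plumbing with $V$ grows without bound as $i$ varies, achieved by lengthening the plumbing chain. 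Next, equivariantly plumb $W_i$ to $V$ via $\psi$ and an equivariant coordinate function of $W_i$ at an unused fixed point. By Theorem \ref{equivariant plumbing theorem} the resulting manifold $W_i\square V$ carries an invariant $scal>0$ metric $g_i$ of product type near the boundary whose restriction $\partial g_i$ is an invariant $Ric>0$ metric on $\partial(W_i\square V)$. The $\Zp 2$-action on $\partial(W_i\square V)$ is free by the first condition of \ref{conditions 4k+1} and by (a), and arguing as in the proof of Theorem \ref{AB*} (with the equivariant analog of Lemma \ref{lemma 2 connected sum}) one obtains an equivariant diffeomorphism $\partial(W_i\square V)\cong \partial V$, where the target carries a \emph{fixed} free $\Zp 2$-action $\tau$ independent of $i$ (since equivariant connected sum with the standard $S^{4k+1}$ does not alter the equivariant diffeomorphism type). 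Setting $\overline M := \partial V/\tau$, the metrics $\partial g_i$ descend to $Ric>0$ metrics $\overline g_i$ on the same underlying manifold $\overline M$.

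The invariant $Spin^c$-structure on $V$ and its flat invariant $U(1)$-connection extend over each $W_i\square V$ to give a $Spin^c$-structure on $\overline M$ with torsion first Chern class, and this descended structure is independent of $i$. Let $\mathcal G\subset \mathrm{Diff}(\overline M)$ denote the subgroup of $Spin^c$-preserving diffeomorphisms, which has finite index since $H^2(\overline M;\mathbb{Z})$ is finite (Section \ref{moduli}). By Corollary \ref{eta sspinc constant} it suffices to exhibit infinitely many distinct values of the twisted eta invariant $\eta_{\widetilde\alpha}(\overline M,\overline g_i)$, with $\widetilde\alpha := \alpha - \mathbb{1}$ and $\alpha$ the nontrivial one-dimensional $\Zp 2$-representation, to conclude that $\mathcal{R}_{Ric>0}(\overline M)/\mathcal G$ has infinitely many components and hence that $\overline M$ admits infinitely many different geometries of positive Ricci curvature. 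To compute $\eta_{\widetilde\alpha}(\overline M,\overline g_i)$ I would apply Donnelly's formula (Theorem \ref{Donnelly}) to the invariant $Spin^c$-Dirac operator on $(W_i\square V,g_i)$. Positive scalar curvature together with flatness of the $U(1)$-connection force $\ind(D^+_{W_i\square V})(\tau)=0$ and $h_\tau=0$ by Theorem \ref{Dirac injective}, so Donnelly's formula collapses to $\tfrac{1}{2}\eta_{D_{\partial V}}(\tau) = \sum_{N\subset (W_i\square V)^\tau} a_N$, and formula (\ref{EQ: twisted eta}) then yields $\eta_{\widetilde\alpha}(\overline M,\overline g_i) = -\eta_{D_{\partial V}}(\tau)$. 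The fixed set $(W_i\square V)^\tau$ consists of finitely many isolated fixed points: those inherited from $V$ contribute a constant total $C_V$ independent of $i$, and each of the $n_i$ isolated fixed points inherited from $W_i$ contributes $a_{pt}=2^{-(2k+1)}$ by the computation in Examples \ref{eta examples} (applied in dimension $4k+2$). Consequently
\[
\eta_{\widetilde\alpha}(\overline M,\overline g_i) \;=\; -2C_V \;-\; n_i\cdot 2^{-2k},
\]
which takes infinitely many distinct real values as $n_i\to\infty$.

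The hard part will be the topological and index-theoretic bookkeeping: verifying that the equivariant connected sum identification $\partial(W_i\square V)\cong \partial V$ is genuinely independent of $i$ so that all $\overline g_i$ descend to the same quotient, and verifying that the invariant $Spin^c$-structure with flat $U(1)$-bundle can be extended coherently across each plumbing so that $a_{pt}=2^{-(2k+1)}$ uniformly at every fixed point of $W_i$. The latter point is delicate because, as recalled in Section \ref{section spinc manifolds}, the invariant $Spin^c$-structure near an isolated fixed point is not unique (the twisted lift $\widetilde\tau=\tau\circ\delta$ produces an inequivalent invariant structure), and an inconsistent choice would alter $a_{pt}$ and could in principle cause cancellation across fixed points; Lemma \ref{lemma spinc on disk} must therefore be invoked with a globally consistent choice throughout the chain of disk bundles in $W_i$.
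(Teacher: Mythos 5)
Your proposal follows the same skeleton as the paper's proof: equivariant plumbing with chains $W_i$, Theorem \ref{equivariant plumbing theorem} to produce invariant $scal>0$ metrics restricting to invariant $Ric>0$ metrics on the boundary, and Donnelly's formula to distinguish the resulting geometries by counting fixed-point contributions. The eta-invariant computation at the end is essentially correct. However, the topological identification step, which you treat as bookkeeping, contains genuine gaps.

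The first gap is your claim (a) that $\partial W_i$ is \emph{equivariantly} diffeomorphic to $S^{4k+1}$ with the $\pm\id$ action, ``arranged by taking characteristic maps in the kernel of the Milnor pairing.'' The kernel of $\beta$ only controls the \emph{non-equivariant} diffeomorphism type of $\partial W_i$. Even if $\partial W_i$ is the standard sphere, the free $\Zp 2$-action on it can produce a homotopy $\R P^{4k+1}$ that is not diffeomorphic to the standard one, and indeed this is exactly what the paper exploits elsewhere (\cite{DG21}) to produce exotic homotopy projective spaces. The paper's proof of Theorem \ref{CD*} does \emph{not} claim equivariant standardness; instead it uses only that there are finitely many oriented diffeomorphism types of homotopy $\R P^{4k+1}$s (via \cite{L71}) to pass to an infinite subset $\mathcal{L}$ where the quotients $\partial W_{8l}/\Zp 2$ agree. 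Your assertion that ``equivariant connected sum with the standard $S^{4k+1}$ does not alter the equivariant diffeomorphism type'' is vacuously true but does not apply, because $\partial W_i$ with its free action is not known to be the standard $S^{4k+1}$.

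The second gap is the deduction that $\partial(W_i\square V)$ is equivariantly independent of $i$ by ``the equivariant analog of Lemma \ref{lemma 2 connected sum}.'' There is no ready-made equivariant analog. The paper spends substantial effort on this point: Claim 2 establishes the non-equivariant diffeomorphism $\partial(W_{8l}\square V)\cong\partial V$ by a careful analysis of characteristic maps, nowhere-vanishing sections (which is why disk tangent bundles $D(TS^{2k+1})$ are used, not generic disk bundles), and the $2$-torsion structure of $\pi_{2k}(SO(2k+1))$; then Claim 5 upgrades this to an equivariant statement only after invoking Haefliger's isotopy theorem to match the embedded $\R P^{2k}$s, controlling equivariant framings via the finiteness of $\widetilde{KO}(\Sigma\R P^{2k})$, and passing to a further infinite subsequence $\mathcal{L}'$. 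Your proposal contains none of this. Similarly, the assertion that the descended $Spin^c$-structure on $\overline M$ ``is independent of $i$'' requires yet another subsequence $\mathcal{L}''$ in the paper, not just Lemma \ref{lemma spinc on disk}.

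In short: the analytic and index-theoretic parts of your plan are sound and match the paper, but the equivariant topology of the identification $\partial(W_i\square V)\cong\partial V$ is where the real work lies, and the mechanisms you cite do not suffice. The repair requires replacing ``equivariant standardness of $\partial W_i$'' with a pigeonhole argument over finitely many homotopy $\R P^{4k+1}$ types, and replacing ``equivariant analog of Lemma \ref{lemma 2 connected sum}'' with an explicit equivariant isotopy argument built on Haefliger's embedding theorem and a framing analysis.
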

Recall from Definition \ref{nice equivariant coordinate function} that $\psi$ is a nice equivariant metric coordinate function if there exists $R,N>0$ and a positive constant $\kappa$ such that for every $\rho <\kappa$ there exists an invariant metric $g_V$ on $V$ such that $g_V$ has $scal>0$ on $V$,  has $Ric>0$ on $\partial V$, the mean curvature of $\partial V$ is nonnegative, and such that $\psi$ defines an isometric embedding $D^p_R(N)\times B_{\pi /2} ^q(\rho)\hookrightarrow (V,g_V)$.
 
The proof of Theorem \ref{CD*} will be given at the end of this section. The idea of the proof may be summarized as follows: We will consider equivariant plumbings of $V$ with suitable plumbings $W$. The plumbings come with invariant $Spin^c$-structures which extend (up to a deck transformation) the structure on $V$. We show that there are infinitely many inequivalent such plumbings $W$ for which $\partial (W\square V)$ is diffeomorphic to $M=\partial V$. We want to arrange that $M$ and $\partial (W\square V)$ are also equivariantly diffeomorphic  for some fixed free $\Zp 2$-action on $M$. To see that this is possible, we will use Haefliger's isotopy theorem \cite{H61} which we state now for later reference.
\begin{theorem}\label{theorem haefliger} Let $X^n$, $Y^m$ be smooth manifolds, $X$ closed, connected, and let $f:X\to Y$ be a continuous map such that $f_*:\pi _i(X)\to \pi _i(Y)$ is an isomorphism for $i\leq r$ and surjective for $i=r+1$. Then any two embeddings $X\hookrightarrow Y$ which are homotopic to $f$ are ambient isotopic if $m>2n-r$ and $2m > 3(n+1)$.\qed
\end{theorem}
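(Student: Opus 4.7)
The plan is to follow Haefliger's original strategy from \cite{H61}, splitting the proof into a reduction to regular isotopy and a deformation argument for regular isotopy.

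First, I would reduce the ambient-isotopy statement to a regular-isotopy statement. By the Thom--Palais isotopy extension theorem, since $X$ is compact, every smooth path of embeddings $e_t:X\hookrightarrow Y$, $t\in[0,1]$, extends to a smooth path of diffeomorphisms $F_t:Y\to Y$ with $F_0=\mathrm{id}_Y$ and $F_t\circ e_0=e_t$; in particular $F_1$ conjugates $e_0$ to $e_1$. Hence it is enough to show that any two embeddings $e_0,e_1:X\hookrightarrow Y$ homotopic to $f$ are connected by a smooth $1$-parameter family of embeddings (a regular isotopy).

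A regular isotopy from $e_0$ to $e_1$ is equivalent to a smooth embedding $\Phi:X\times[0,1]\hookrightarrow Y\times[0,1]$ commuting with the projections to $[0,1]$ and equal to $(e_i,i)$ on $X\times\{i\}$, $i=0,1$. Starting from any homotopy $H$ from $e_0$ to $e_1$, its track $\tilde H(x,t):=(H(x,t),t)$ is a smooth map of an $(n+1)$-manifold into an $(m+1)$-manifold which is already a level-preserving embedding near $X\times\partial[0,1]$. The task is to deform $\tilde H$ rel this boundary to a level-preserving embedding. Following Haefliger, the obstructions to such a deformation are detected by the $\Zp 2$-equivariant map induced on deleted products: one analyzes $\Zp 2$-equivariant cohomology classes on $(X\times[0,1])\times(X\times[0,1])\setminus\Delta$, rel its boundary subspace, taking values in the homotopy groups of the analogous deleted product of $Y\times[0,1]$. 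The metastable bound $2m>3(n+1)$ controls the dimensions in which these obstructions can be non-trivial, while the connectivity hypothesis $m>2n-r$ combined with the $r$-connectivity of $f_\ast$ ensures the extra vanishing needed to kill the top-dimensional obstruction. Working cell by cell on a handle decomposition of $X$ (and of $X\times[0,1]$ rel $\partial$), one checks that every obstruction lies in a trivial group, producing the required deformation.

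The main obstacle is the equivariant obstruction computation: one must set up the correct twisted coefficient systems on the deleted product, verify that the stated dimensional hypotheses force each obstruction class to sit in a trivial group, and carry out the inductive deformation rel boundary. This is exactly the technical core of \cite{H61}; all other ingredients (existence of the track, isotopy extension, passage from level-preserving embedding to regular isotopy) are formal. Once the deformation is achieved, the resulting level-preserving embedding $\Phi$ gives a regular isotopy from $e_0$ to $e_1$, and the Thom--Palais isotopy extension theorem then upgrades it to the claimed ambient isotopy.
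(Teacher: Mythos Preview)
The paper does not prove this theorem at all: it is stated with a \qed and attributed to Haefliger \cite{H61}, purely for later reference in the proof of Theorem~\ref{CD*}. Your proposal therefore goes well beyond what the paper does; you are sketching Haefliger's original argument rather than reproducing anything from the present paper.

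As a sketch of Haefliger's method your outline is broadly on target---the reduction to regular isotopy via isotopy extension, the track construction in $Y\times[0,1]$, and the appeal to equivariant obstruction theory on deleted products under the metastable hypothesis $2m>3(n+1)$ are indeed the architecture of \cite{H61}. That said, the core paragraph is a description of what needs to be done rather than an argument: you do not actually compute any obstruction group, do not explain precisely how the $r$-connectivity of $f$ interacts with the bound $m>2n-r$ to kill the relevant classes, and do not carry out the inductive deformation. You yourself flag this as ``exactly the technical core of \cite{H61}''. So what you have written is a correct roadmap, not a proof; for the purposes of this paper (which only needs the statement) that is more than sufficient, but it would not stand on its own as a demonstration of the theorem.
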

Using Haefliger's isotopy theorem, we show that there are infinitely many inequivalent choices of $W$ such that $M$ and $\partial (W\square V)$ are equivariantly diffeomorphic for some fixed free $\Zp 2$-action on $M$. By Proposition \ref{equivariant plumbing theorem}, $\partial (W\square V)$ carries an invariant $Ric>0$ metric which in turn yields an invariant $Ric>0$ metric on $M$. Computation of suitable eta invariants then shows that these $Ric>0$ metrics on $M$ define infinitely many different geometries of positive Ricci curvature on the quotient manifold $M/\Zp 2$.

We remark that the present proof of Theorem \ref{CD*} only shows the conclusion for {\em some} free $\Zp 2$-action on $M$. We expect that the proof can be modified to show that the conclusion holds true for the $\Zp 2$-action induced by the {\em given} action on $V$.
%We hope to report on this in the future.

Before we give the details, we point out some applications. In the first application, we consider a $\Zp 2$-equivariant vector bundle $\xi$ of rank $2k+1\geq 3$ over a closed connected $\Zp 2$-manifold $B$ of dimension $2k+1$.

\begin{theorem}\label{special CD* 1} Suppose $B$ has an invariant $Ric >0$ metric, the total space $V$ of the disk bundle of $\xi $ has an invariant $Spin^c$-structure with flat associated principal $U(1)$-bundle, and the $\Zp 2$-action on $V$ has an isolated fixed point and no fixed points on the boundary $M:=\partial V$.

Then $M$ has a smooth free $\Zp 2$-action such that the quotient manifold $M/\Zp 2$ admits infinitely many different geometries of positive Ricci curvature.
\end{theorem}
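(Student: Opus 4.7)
The plan is to apply Theorem \ref{CD*} directly, with $V$ taken to be the total space of the disk bundle of $\xi$. The proof then reduces to verifying the hypotheses of that theorem: the existence of a nice equivariant metric coordinate function $\psi: D^p\times D^q\hookrightarrow V$ with $p=q=2k+1$, together with the three items of Condition \ref{conditions 4k+1}.

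My first step is to exhibit the correct local model at the isolated fixed point $x_0\in V$. Since the equivariant bundle projection sends fixed points to fixed points, its image $b_0\in B$ is a fixed point of the base action, and in fact isolated: a positive-dimensional local component of fixed points in $B$ through $b_0$ would yield, via the zero section, a family of fixed points of $V$ accumulating at $x_0$. As any element of order two in $O(2k+1)$ has only eigenvalues $\pm 1$, isolation forbids the eigenvalue $+1$, so the differential of the involution on $T_{b_0}B$ and the linear action on the fiber $\xi_{b_0}$ are both equal to $-\id$. An equivariant tubular neighborhood theorem at $x_0$ then produces an equivariant orientation preserving coordinate function $\psi: D^{2k+1}\times D^{2k+1}\hookrightarrow V$ on which $\Zp 2$ acts by $\pm(\id,\id)$, matching the conventions of Definition \ref{nice equivariant coordinate function} and Condition \ref{conditions 4k+1}.

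Next I would invoke Remark \ref{nice equivariant coordinate function remark}: the rank of $\xi$ is $2k+1\geq 3$, the base $B$ is compact and carries an invariant metric of positive Ricci curvature, and $\psi$ is an equivariant coordinate function at the isolated fixed point, so $\psi$ is in fact a nice equivariant metric coordinate function. The remaining items of Condition \ref{conditions 4k+1}, namely freeness of the $\Zp 2$-action on $M=\partial V$ and the existence of an invariant $\spinc$-structure on $V$ with flat associated principal $U(1)$-bundle, are assumed directly in the hypotheses of the theorem. Applying Theorem \ref{CD*} then produces a free $\Zp 2$-action on $M$ such that $M/\Zp 2$ admits infinitely many different geometries of positive Ricci curvature. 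The only mildly delicate point is the first step, namely the verification that the local involution near $x_0$ has the standard form $\pm(\id,\id)$ rather than some other order-two linear involution on $D^{2k+1}\times D^{2k+1}$; this is settled by the eigenvalue argument above, and the remainder is a clean citation of the preceding machinery, so no further obstacle is anticipated.
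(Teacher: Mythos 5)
Your proof is correct and follows essentially the same route as the paper: establish the local model $\pm(\id,\id)$ at the isolated fixed point, invoke Remark \ref{nice equivariant coordinate function remark} to obtain a nice equivariant metric coordinate function, verify Condition \ref{conditions 4k+1}, and apply Theorem \ref{CD*}. One implicit step worth spelling out, and which the paper states as its opening observation, is that $x_0$ lies in the zero section: your argument that $b_0$ is isolated in $B$ (via zero-section fixed points accumulating at $x_0$) presupposes $x_0=(b_0,0)$, and the justification is that if $x_0$ were a nonzero interior vector $v\in\xi_{b_0}$, linearity of the fiber involution would force the entire segment $\{tv\}$ to be fixed, contradicting isolation.
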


\begin{proof} We first note that the isolated fixed point is in the zero section $B\subset V$ and that we can choose an equivariant coordinate function $\psi $ of the disk bundle at the fixed point with the properties stated in Condition \ref{conditions 4k+1}.2.

We next note that by Remark \ref{nice equivariant coordinate function remark} $\psi$ is a nice equivariant metric coordinate function. Since $V$ satisfies Conditions \ref{conditions 4k+1}.1 and \ref{conditions 4k+1}.3, the theorem follows from Theorem \ref{CD*}.
\end{proof}

As special cases, we obtain the following two results which have been stated in the introduction.
\begin{theoremC}\label{theorem C} Let $B$ be a closed connected manifold of dimension $2k+1\geq 3$ with a $\Zp 2$-action with isolated fixed points and an invariant $Ric >0$ metric.
Then the total space $S(TB)$ of the sphere tangent bundle has a smooth free $\Zp 2$-action such that the quotient manifold admits infinitely many different geometries of positive Ricci curvature.
\end{theoremC}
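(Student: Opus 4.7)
The plan is to deduce Theorem C directly from Theorem \ref{special CD* 1} applied to the equivariant bundle $\xi = TB \to B$. Since the $\Zp 2$-action on $B$ is smooth, the tangent bundle $TB$ is naturally a $\Zp 2$-equivariant real vector bundle of rank $2k+1$ over the $(2k+1)$-dimensional base $B$, and $B$ carries an invariant $Ric>0$ metric by hypothesis. Setting $V := D(TB)$ with $M = S(TB) = \partial V$, what remains is to verify that the induced action on $V$ has an isolated fixed point and no fixed points on $\partial V$, and that $V$ carries an invariant $\spinc$-structure whose associated principal $U(1)$-bundle is flat.

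For the first two conditions, the key observation is that at an isolated fixed point $p \in B$, the differential $d\tau_p : T_p B \to T_p B$ of the involution is a linear involution whose $+1$-eigenspace coincides (by the equivariant tubular neighborhood theorem) with the tangent space at $p$ to the fixed point set; since $p$ is isolated, this eigenspace is trivial, forcing $d\tau_p = -\id$. Consequently, the only fixed point of the induced action on the fiber $(TB)_p$ is the origin, so each fixed point of $B$ lifts to a single isolated fixed point in $V$ lying on the zero section (in particular, not on $\partial V$). Moreover, the induced action on the unit sphere $S(T_p B) \cong S^{2k}$ is the antipodal action and hence is free; away from fixed points of $B$ the $\Zp 2$-action on $V$ has no fixed points at all. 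Both properties therefore hold.

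For the $\spinc$-structure, I would fix the Levi-Civita connection of the invariant $Ric>0$ metric on $B$, which yields an invariant splitting $TV \cong \pi^*(TB) \oplus \pi^*(TB)$ into horizontal and vertical components, with $\pi : V \to B$ the projection. The fiberwise identification $TB \oplus TB \cong TB \otimes_{\R} \C$ endows $V$ with a canonical almost complex structure $J$, and the differential of $\tau$ acts diagonally on the two summands and so commutes with $J$. The induced $\spinc$-structure is therefore invariant, and its first Chern class equals $c_1(\pi^*(TB\otimes_{\R}\C)) = \pi^*c_1(TB\otimes_{\R}\C) = 0$, since complexifications of real bundles are isomorphic to their conjugates. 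The associated principal $U(1)$-bundle is thus topologically trivial and admits a flat connection; averaging any such connection over $\Zp 2$ produces an invariant flat connection.

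With all hypotheses verified, Theorem \ref{special CD* 1} applies and delivers the conclusion of Theorem C. The only delicate point in this derivation is the rigidity argument showing $d\tau_p = -\id$ at isolated fixed points, which is what simultaneously supplies an isolated fixed point in $V$ and guarantees the freeness of the induced action on $M$; everything else is routine $\spinc$-bookkeeping.
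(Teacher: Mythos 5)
Your proof takes the same route as the paper: apply Theorem \ref{special CD* 1} to $\xi = TB$, use the Levi-Civita connection to identify $TV$ equivariantly with $\pi^*(TB\otimes\C)$ and thereby obtain an invariant almost complex structure on $V=D(TB)$, and conclude that the associated invariant $Spin^c$-structure has flat principal $U(1)$-bundle (your elaboration that $d\tau_p=-\id$ at an isolated fixed point, forcing an isolated fixed point on the zero section of $V$ and a free antipodal action on $\partial V$, is exactly the unspoken content of the paper's assertion that the induced action on $V$ has this form). One small overstatement: the observation that $TB\otimes\C$ is isomorphic to its conjugate shows only that $2c_1(TB\otimes\C)=0$ --- in fact $c_1(TB\otimes\C)$ equals the integral Bockstein of $w_1(TB)$ and can be nonzero when $B$ is non-orientable --- but since $c_1$ being a torsion class is precisely what flatness of the $U(1)$-bundle requires, the conclusion is unaffected.
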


\begin{proof} Let $\pi :V\to B$ denote the disk tangent bundle together with the $\Zp 2$-action induced by differentials. Note that $\Zp 2$ acts on $V$ with isolated fixed point and the action has no fixed points on the boundary $S(TB)=\partial V$. We can identify equivariantly the tangent bundle $TV$ with $\pi ^*(TB\otimes \C)$ using the Levi-Civita connection of $B$. Thus, $V$ has an invariant almost complex structure for which the first Chern class is a torsion class. In particular, the principal $U(1)$-bundle of the induced invariant $Spin^c$-structure on $V$ is flat. Now the statement follows from Theorem \ref{special CD* 1}.
\end{proof}

\begin{theoremD}\label{theorem D}  Let $\xi $ be a $\Zp 2$-equivariant vector bundle of rank $p$ over a $p$-dimensional closed manifold $B$ and let $p=2k+1\geq 3$. Suppose $\Zp 2$ acts with isolated fixed points. Suppose $B$ is $2$-connected and admits an invariant metric of positive Ricci curvature. 
Then the total space $S(\xi )$ of the associated sphere bundle has a smooth free $\Zp 2$-action such that the quotient manifold admits infinitely many different geometries of positive Ricci curvature.
\end{theoremD}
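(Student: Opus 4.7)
The plan is to derive Theorem D as an immediate corollary of Theorem~\ref{special CD* 1}, applied to $V := D(\xi)$, the total space of the disk bundle of $\xi$. Given the hypotheses, we need only verify two things: (i) $V$ admits an invariant $\spinc$-structure with flat associated principal $U(1)$-bundle, and (ii) the $\Zp 2$-action on $V$ has an isolated fixed point while acting freely on $M := \partial V = S(\xi)$.

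For (i), the decisive fact is that $B$ is $2$-connected. This gives $H^1(B;\Zp 2) = H^2(B;\Zp 2) = 0$, whence $w_i(B) = 0$ and $w_i(\xi) = 0$ for $i = 1, 2$. Since $TV|_B \cong TB \oplus \xi$ and $B \hookrightarrow V$ is a homotopy equivalence, the Whitney product formula gives $w_1(V) = w_2(V) = 0$, so $V$ is spin. Moreover $H^1(V;\Zp 2) = 0$, so the spin structure is unique and therefore invariant under the $\Zp 2$-action. The induced $\spinc$-structure has trivial, hence flat, associated principal $U(1)$-bundle.

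For (ii), since $\Zp 2$ acts on $\xi$ with isolated fixed points, there is at least one isolated fixed point $b \in V$, necessarily lying in the zero section $B \subset V$. By the slice theorem the action near $b$ is conjugate to the linear action on $T_bV = T_bB \oplus \xi_b$, and for $b$ to be isolated this linear involution must have no $+1$ eigenvector; in particular it acts as $-\id$ on the fibre $\xi_b$. Thus the induced action on $S(\xi_b) \subset M$ over $b$ is antipodal and hence free. Over non-fixed points of $B$, entire fibres of $M \to B$ are permuted, so no further fixed points arise. Consequently $\Zp 2$ acts freely on $M$.

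With (i) and (ii) in hand, Theorem~\ref{special CD* 1} applies directly and yields the conclusion: $M = S(\xi)$ carries a smooth free $\Zp 2$-action (possibly distinct from the one induced by the given action) such that $M/\Zp 2$ admits infinitely many different geometries of positive Ricci curvature. The only substantive step is the spin-structure verification via $2$-connectedness; everything else is an unpacking of the hypotheses, so no genuine obstacle is expected.
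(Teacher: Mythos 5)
Your outline matches the paper's overall strategy (reduce to Theorem~\ref{special CD* 1} with $V=D(\xi)$, and verify Conditions \ref{conditions 4k+1}), and your treatment of the freeness of the action on $\partial V$ is fine. But the key step — producing an \emph{invariant} $Spin^c$-structure on $V$ with flat associated $U(1)$-bundle — contains a genuine gap.

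You argue: $B$ is $2$-connected, so $V$ is spin; $H^1(V;\Zp 2)=0$, so the spin structure is unique, ``and therefore invariant under the $\Zp 2$-action''; then take the induced $Spin^c$-structure. The inference from uniqueness to invariance does not hold. Uniqueness of the spin structure only tells you that the generator $\tau$ of $\Zp 2$ lifts to a bundle map $\tilde\tau$ of the principal $Spin$-bundle covering the action on the frame bundle. But $\tilde\tau^2$ covers the identity, so it is either the identity or the deck transformation of $P_{Spin}\to P_{SO}$, and there is a concrete obstruction forcing it to be the latter here. Indeed $\dim V = 2p = 4k+2$, and at an isolated fixed point $b$ the linearization of $\tau$ on $T_bV$ is $-\id\in SO(4k+2)$. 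Its lift to $Spin(4k+2)$ is $\pm\omega$ with $\omega=e_1\cdots e_{4k+2}$, and $\omega^2=(-1)^{(4k+2)(4k+3)/2}=-1$ since $4k+2\equiv 2\pmod 4$. Thus $\tilde\tau^2=-1$ at $b$, hence globally (as $V$ is connected), so the $\Zp 2$-action does \emph{not} lift to the spin structure — only a $\Zp 4$-action does. Consequently there is no ``induced'' invariant $Spin^c$-structure coming from an invariant spin structure.

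This is precisely why the paper does not argue via spin at all. It passes to $Spin^c$ from the start: cut out invariant disks around the fixed points to get $X$ on which $\Zp 2$ acts freely, pass to the quotient $\overline X=X/\Zp 2$, use $2$-connectivity of $X$ to identify the low-degree cohomology of $\overline X$ with that of $\R P^\infty$ and deduce that $\overline X$ is $Spin^c$, pull back to get an invariant $Spin^c$-structure on $X$, and finally extend over the excised disks via Lemma~\ref{lemma spinc on disk}. The $U(1)$-factor is what absorbs the sign obstruction you run into above (one can pair $\omega$ with $i\in U(1)$ to get an element of $Spin^c(4k+2)$ squaring to the identity). To repair your proof you need to replace the uniqueness-of-spin-structure argument by the paper's $Spin^c$-on-the-quotient argument (or some equivalent device that explicitly handles the sign at the fixed points).
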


\begin{proof} Let $V$ denote the total space of the associated disk bundle. Note that the $\Zp 2$-action on $V$ has isolated fixed point and no fixed points on the boundary.

Let $X$ be the manifold obtained from $V$ by cutting out a small invariant open disk at each fixed point. Hence, $X$ is a smooth manifold with boundary diffeomorphic to the union of $\partial V$ and a sphere $S^{4k+1}$ for each fixed point. Note that the $\Zp 2$-action restricts to a free orientation preserving action on $X$.

Since $B$ is $2$-connected, the manifold $X$ is also $2$-connected. It follows that the cohomology of the quotient $\overline X:=X/\Zp 2$ can be identified in degree $<3$ with the one of $\R P^\infty$. In particular, the coefficient homomorphism $\Zp  2=H^2(\overline X;\Z) \to H^2(\overline X;\Zp 2)=\Zp 2$ is an isomorphism. Since $\overline X$ is orientable, this implies that $\overline X$ admits a $Spin^c$-structure. Hence, $X$ admits a $\Zp 2$-invariant $Spin^c$-structure. Note that the invariant structure can be extended to each disk (see Lemma \ref{lemma spinc on disk}). Thus, $V$ admits an invariant $Spin^c$-structure and, since $V$ is $2$-connected, the associated principal $U(1)$-bundle has a flat connection. Now the statement follows from Theorem \ref{special CD* 1}.
\end{proof}

Next, assume $V=E_1\square \ldots \square E_k$ is a $\Zp 2$-equivariant plumbing of disk bundles $E_i\to S^{2k+1}$ of rank $2k+1\geq 3$ according to a straight line. We assume that $\Zp 2$ acts linearly on each disk bundle with isolated fixed points and that the plumbing is carried out \wrt \ equivariant coordinate functions at fixed points (as in Section \ref{Equivariant plumbing}, see also the paragraph before Theorem \ref{equivariant plumbing theorem}).

\begin{theorem}\label{special CD* 2} Let $V=E_1\square \ldots \square E_k$ be as above. Then $M=\partial V$ admits a smooth free $\Zp 2$-action such that the quotient manifold $M/\Zp 2$ admits infinitely many different geometries of positive Ricci curvature.
\end{theorem}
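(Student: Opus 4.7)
The plan is to verify that $V = E_1 \square \ldots \square E_k$ satisfies all the hypotheses of Theorem \ref{CD*}, so that Theorem \ref{CD*} immediately yields the conclusion. The four items to check are: (i) freeness of the $\Zp 2$-action on $\partial V$, (ii) the existence of a neat equivariant embedding $\psi: D^{2k+1} \times D^{2k+1} \hookrightarrow V$ with local model $\pm(\id, \id)$, (iii) an invariant $Spin^c$-structure on $V$ with flat associated principal $U(1)$-bundle, and (iv) that the chosen $\psi$ is a \emph{nice} equivariant metric coordinate function in the sense of Definition \ref{nice equivariant coordinate function}.

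For (i) and (ii), I would use the explicit form of the equivariant bundle structure from Example \ref{equivariant bundle example}. On each $E_i$, the non-trivial element of $\Zp 2$ acts as $-\id$ on every fiber, so its restriction to each fiber sphere $S^{2k} \subset \partial E_i$ is antipodal and free; the plumbing cross-identifications occur inside open disks around isolated fixed points and therefore preserve freeness on the boundary. Since the plumbing is along a straight line with $k-1$ cross-identifications, the two terminal bundles $E_1$ and $E_k$ each contribute at least one isolated fixed point of $V$ that has not been consumed by the plumbing. Choosing $\psi$ to be an equivariant coordinate function of $E_1$ at such an unused fixed point realizes Condition \ref{conditions 4k+1}.2.

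For (iii), I would exploit that $V$ deformation-retracts onto $\bigvee_{i=1}^{k} S^{2k+1}$ and is therefore $2k$-connected; in particular (since $2k+1 \geq 3$) we have $H^{1}(V;\Z/2) = 0$ and $H^{2}(V;\Z) = 0$. The first vanishing forces uniqueness of the spin structure, which is thus automatically preserved by the action; the orientation-preserving character of $\pm(\id,\id)$ on $D^{2k+1} \times D^{2k+1}$ guarantees that $\Zp 2$ acts by orientation-preserving diffeomorphisms on $V$. Simple-connectedness together with the vanishing of $H^{2}(V;\Z)$ allows the $\Zp 2$-action to lift to the $Spin^{c}$-principal bundle (using the $\delta$-flexibility of Section \ref{section spinc manifolds} if necessary), producing an invariant $Spin^{c}$-structure whose associated $U(1)$-bundle is trivial and hence admits an invariant flat connection.

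For (iv), I would argue by induction on $k$. The base case $k = 1$ is immediate from Remark \ref{nice equivariant coordinate function remark}, since $V = E_1$ is a $\Zp 2$-equivariant disk bundle over $S^{2k+1}$ with its round metric, which is invariant and of positive Ricci curvature. For the inductive step, write $V = E_1 \square V^{\prime}$, where $V^{\prime} := E_2 \square \cdots \square E_k$; by the inductive hypothesis $V^{\prime}$ carries a nice equivariant metric coordinate function at one of its unused isolated fixed points, so Theorem \ref{equivariant plumbing theorem} (applied with $W = E_1$) supplies an invariant metric on $V$ with the required $scal>0$/$Ric>0$/mean-curvature properties, and Remark \ref{remark equivariant plumbing nice coordinate function} identifies the remaining unused equivariant coordinate function of $E_1$ as a nice equivariant metric coordinate function of $V$. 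Taking $\psi$ to be this coordinate function completes the verification, and Theorem \ref{CD*} then produces a free $\Zp 2$-action on $M = \partial V$ such that $M/\Zp 2$ carries infinitely many different geometries of positive Ricci curvature. The main obstacle is item (iii), specifically promoting the abstract topological data ($w_2 = 0$, $H^2(V;\Z) = 0$) to an \emph{invariant} lift of the action to the $Spin^c$-bundle; here the $Spin^c$-formalism and 2-connectedness combine to eliminate the potential obstructions that would arise in the plain spin setting.
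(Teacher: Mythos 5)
Your overall strategy—verifying the hypotheses of Theorem \ref{CD*}—is exactly the paper's, and your arguments for items (i), (ii), and (iv) are correct and essentially identical to the paper's (which takes $\psi$ at the $E_k$-end rather than the $E_1$-end, but the straight-line plumbing is symmetric). Your induction for (iv) just makes explicit what the paper compresses into the phrase ``by Remark \ref{nice equivariant coordinate function remark} and by the proof of Theorem \ref{equivariant plumbing theorem}.''

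For item (iii), however, there is a genuine gap. You assert that simple-connectedness and $H^2(V;\Z)=0$ ``allow the $\Zp 2$-action to lift to the $Spin^c$-principal bundle (using the $\delta$-flexibility if necessary),'' but this is not a formal consequence of those vanishings, and the $\delta$-flexibility cannot rescue it: composing a lift $\tilde\tau$ with the deck transformation $\delta$ leaves $\tilde\tau^2$ unchanged (as $\delta$ is central), so it only produces a second invariant $Spin^c$-structure once one already exists — it never proves existence. The obstruction is real: if one lifts the action to $P_{SO}\times P_{U(1)}$ with trivial $U(1)$-part, then at an isolated fixed point the element $-\id\in SO(4k+2)$ lifts to $\pm e_1\cdots e_{4k+2}\in Spin(4k+2)$, whose square is $(-1)^{(4k+2)(4k+3)/2 + (4k+2)}=-1$; hence $\tilde\tau^2=\delta$ and the lift has order four, not two. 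Killing this obstruction requires the $U(1)$-part of the lift to act by $-1$ at fixed points, so that the square becomes $[-1,-1]=[1,1]$ in $Spin^c(4k+2)$, and it is precisely this that the paper arranges by removing small invariant disks around the fixed points, passing to the free quotient $\overline X:=X/\Zp 2$, invoking the $Spin^c$-existence criterion via the fact that $H^2(\overline X;\Z)\to H^2(\overline X;\Zp 2)$ is surjective (because $\overline X$ has the low-degree cohomology of $\R P^\infty$, $X$ being $2$-connected), pulling the resulting $Spin^c$-structure back to an invariant one on $X$, and then extending over the removed disks by Lemma \ref{lemma spinc on disk}. Your proposal correctly flags (iii) as the main obstacle but does not supply the argument — you need either the paper's quotient-and-extend construction or an explicit direct construction of the correct $U(1)$-twist, and neither is present.
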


\begin{proof} We first note that Conditions \ref{conditions 4k+1}.1 and \ref{conditions 4k+1}.2 are satisfied since $\Zp 2$ acts on each disk bundle with isolated fixed points. Let $\psi: D^{2k+1}\times D^{2k+1} \hookrightarrow V$ be a neat equivariant embedding given by an equivariant coordinate function of the disk bundle $E_k$ at the isolated $\Zp 2$-fixed point which hasn't been used in the plumbing. By Remark \ref{nice equivariant coordinate function remark} and by the proof of Theorem \ref{equivariant plumbing theorem}, $\psi$ is a nice equivariant metric coordinate function.

Let $X$ be the manifold obtained from $V$ by cutting out a small invariant open disk at each fixed point. Hence, $X$ is a smooth manifold with boundary diffeomorphic to the union of $\partial V$ and a sphere $S^{4k+1}$ for each fixed point.
Next, note that $X$ is $2$-connected. Arguing as in the proof of Theorem D, we see that $X$ admits a $\Zp 2$-invariant $Spin^c$-structure which extends to $V$. Hence, $V$ also satisfies Condition \ref{conditions 4k+1}.3, i.e., $V$ admits an invariant $Spin^c$-structure and the associated principal $U(1)$-bundle has a flat connection. Thus, the statement follows from Theorem \ref{CD*}.
\end{proof}

We now come to the proof of Theorem \ref{CD*}.
In the proof we will consider plumbings of the form $W\square V$ where $W$ is given by a suitable plumbing of disk bundles. The plumbing of $W$ and $V$ will be carried out \wrt \ the nice equivariant metric coordinate function $\psi $ of $V$ and an equivariant coordinate function of a disk bundle occurring in $W$.

\begin{proof}[Proof of Theorem \ref{CD*}]

We consider a linear $\Zp 2$-action on $S^{2k+1}$ with two isolated fixed points and let $W_{8l}^{4k+2}$, $l\geq 1$, be obtained by plumbing equivariantly $8l$ copies of the disk tangent bundle $D(TS^{2k+1})$ at $\Zp 2$-fixed points according to a straight line with $8l$ vertices,
$$W_{8l}:= D(TS^{2k+1})\square D(TS^{2k+1})\square  \ldots \square D(TS^{2k+1}).$$

\noindent
\underline {Claim 1:} $\partial W_{8l}\cong S^{4k+1}$.
\bigskip

\noindent
\underline {Proof:} It is known that $\partial W_{8l}$ is (non-equivariantly) diffeomorphic to a homotopy sphere with vanishing Arf invariant and, hence, is diffeomorphic to the standard sphere (see, for example, \cite[\S 11.3]{HM68}). \hfill $\Diamond$

\bigskip
Consider the equivariant plumbing $W_{8l}\square V$. The plumbing will be carried out  \wrt \ $\psi$ and an equivariant orientation preserving coordinate function $\varphi _{8l}^-: D^{2k+1}\times D^{2k+1}\hookrightarrow W_{8l}$  of the last plumbing block $D(TS^{2k+1})\subset W_{8l}$ at the $\Zp 2$-fixed point which hasn't been used in the plumbing $W_{8l}$.

\bigskip
\noindent
\underline {Claim 2:} $\partial (W_{8l}\square V)\cong \partial V$.
\bigskip

\noindent
\underline {Proof:} It suffices to show that the embedding $\partial \varphi _{8l}^-: D^{2k+1}\times S^{2k}\hookrightarrow \partial W_{8l}\cong S^{4k+1}\sharp \partial W_{8l}$ is (non-equivariantly) isotopic to the inclusion $D^{2k+1}\times S^{2k}\hookrightarrow (D^{2k+1}\times S^{2k}\cup _I D^{2k+1}\times S^{2k})\sharp \partial W_{8l}$ (see Lemma \ref{lemma 1 connected sum}).

To see this, we first decompose the total space $\partial D(TS^{2k+1})$ of the sphere tangent bundle \wrt\ coordinate functions as $D^{2k+1}\times S^{2k}\cup_{\Phi } D^{2k+1}\times S^{2k}$. Since $S^{2k+1}$ has a nowhere vanishing vector field, the coordinate functions can be chosen such that $\Phi:S^{2k}\times S^{2k}\to S^{2k}\times S^{2k}$, $(x,y)\mapsto (x,\phi(x)(y))$,  is the identity on $S^{2k}\times \{pt\}$ for $pt:=(0,\ldots, 0,1)\in S^{2k}$.
 
Next, consider $D(TS^{2k+1})\square D(TS^{2k+1})$ and identify its boundary with $D^{2k+1}\times S^{2k}\cup_{\Phi \circ I \circ \Phi} D^{2k+1}\times S^{2k}$ (see Lemma \ref{lemma plumbing over spheres}). Note that the embedding $\iota : S^{2k} \hookrightarrow D^{2k+1}\times S^{2k}$, $x\mapsto (x, pt)$, into the first component of the union corresponds to the embedding $S^{2k} \hookrightarrow D^{2k+1}\times S^{2k}$, $x\mapsto (pt, \phi (pt)(x))$, into the second component. The latter embedding is isotopic to an embedding of a fiber in the product bundle $D^{2k+1}\times S^{2k}\to D^{2k+1}$ since $\phi (pt)\in SO(2k+1)$ and $SO(2k+1)$ is connected. Also note that the embedding $\iota: S^{2k} \hookrightarrow D^{2k+1}\times S^{2k}$ extends to $D^{2k+1}\hookrightarrow D^{2k+1}\times S^{2k}$, $x\mapsto (x, pt)$.
 
Let $\varphi _{2}^-: D^{2k+1}\times D^{2k+1}\hookrightarrow D(TS^{2k+1})\square D(TS^{2k+1})$ be an orientation preserving coordinate function of the second disk bundle. It follows from the above that
$$\partial \varphi _{2}^-: D^{2k+1}\times S^{2k}\hookrightarrow \partial\left(D(TS^{2k+1})\square D(TS^{2k+1})\right)$$ is ambiently isotopic to a tubular neighborhood of $\iota : S^{2k} \hookrightarrow D^{2k+1}\times S^{2k}$.

Let $\nu$ be the normal bundle of $D^{2k+1}\times \{pt\}\subset D^{2k+1}\times S^{2k}$ restricted to $S^{2k} \times \{pt\}$ and let $\mathbb {1}$ be the normal bundle of $S^{2k}\times \{pt\}\subset D^{2k+1}\times \{pt\}$. Then $\nu \oplus \mathbb {1}$ defines another tubular neighborhood of $S^{2k}\times \{ pt\} \subset D^{2k+1}\times S^{2k}$. By the tubular neighborhood theorem, we may assume that the two tubular neighborhoods agree up to a change of framing given by $\theta :=[S^{2k}\to SO(2k+1)]\in \pi _{2k}(SO(2k+1))$.

Finally, we consider the entire plumbing $W_{8l}$ and its boundary. Let $\varphi : S^{2k} \hookrightarrow \partial W_{8l}$ be the embedding induced by the embedding $S^{2k} \hookrightarrow D^{2k+1}\times S^{2k}$, $x\mapsto (x, pt)$, into the boundary of the first disk bundle of the plumbing. It follows from the above that $\partial \varphi _{8l}^-: D^{2k+1}\times S^{2k}\hookrightarrow \partial W_{8l}$ is ambiently isotopic to the tubular neighborhood of $\varphi $ defined by $\nu \oplus \mathbb {1}$ up to a change of framing given by $\theta ^{4l}$. Since $\pi _{2k}(SO(2k+1))$ is a torsion group with only torsion of order two \cite{K60}, we may assume that the change of framing is trivial. 

Hence, the embedding 
$\partial \varphi _{8l}^-$ is ambiently isotopic to the tubular neighborhood of $\varphi : S^{2k} \hookrightarrow \partial W_{8l}$ defined by $\nu \oplus \mathbb {1}$. The latter is ambiently isotopic  to the inclusion $D^{2k+1}\times S^{2k}\hookrightarrow (D^{2k+1}\times S^{2k}\cup _I D^{2k+1}\times S^{2k})\sharp \partial W_{8l}= S^{4k+1}\sharp \partial W_{8l}\cong  \partial W_{8l}$.
By Lemma \ref{lemma 1 connected sum}, $\partial (W_{8l}\square V)\cong \partial W_{8l}\sharp \partial V$. Since $\partial W_{8l}\cong S^{4k+1}$, the claim follows. \hfill $\Diamond$

\bigskip
\noindent
\underline {Claim 3:} $W_{8l}\square V$ admits a topological $\Zp 2$-invariant $Spin^c$-structure.
\bigskip

\noindent
\underline {Proof:} We first exhibit an invariant almost complex structure for $W_{8l}= D(TS^{2k+1})\square   \ldots \square D(TS^{2k+1})$. Note that the tangent bundle of  the total space of the disk tangent bundle $\pi : D(TS^{2k+1})\to S^{2k+1}$ can be identified equivariantly with $\pi ^*(TS^{2k+1}\otimes \C)$ using the Levi-Civita connection for the round metric. The identification yields an invariant complex structure (unique up to sign) on the tangent bundle $TD(TS^{2k+1})\to D(TS^{2k+1})$. We fix the almost complex structure, denoted by $J$, which is compatible with the orientation of $D(TS^{2k+1})$ (note that the almost complex structure $-J$ yields the opposite orientation).

We next note that the plumbing of disk tangent bundles via cross identification is compatible with the $\Zp 2$-action and that the almost complex structure on the first disk tangent bundle can be extended to $W_{8l}$ by choosing compatible invariant almost complex structures for the other disk tangent bundles (see \cite{DG21} for more details). Hence, $W_{8l}$ carries an invariant almost complex structure.

We fix the invariant topological $Spin^c$-structure on $W_{8l}$ which is induced by the almost complex structure above. Note that the restriction of the $Spin^c$-structure to the last disk tangent bundle is independent of $l$.

Recall that $V$ comes with a $\Zp 2$-invariant $Spin^c$-structure. The action may be changed by the deck transformation $\delta$ to obtain another inequivalent invariant $Spin^c$-structure  (see Section \ref{section spinc manifolds}). Recall also that $\partial (D^{2k+1}\times  D^{2k+1})=S^{4k+1}$, with a fixed orientation, has precisely two invariant $Spin^c$-structures up to equivalence.

In the following, we will assume that $\psi$ is orientation reversing (this can be arranged by a change of coordinates, if necessary). Among the two $Spin^c$-structures on $V$ above, we choose the one which, when restricted to $\psi (\partial (D^{2k+1}\times  D^{2k+1}))$, is compatible with the $Spin^c$-structure on $W_{8l}$ under the orientation preserving cross identification $\varphi _{8l}^-\circ I\circ \psi ^{-1}$ (see Lemma \ref{lemma spinc on disk}). By construction, the structures on $V$ and $W_{8l}$ are compatible and define a $\Zp 2$-invariant $Spin^c$-structure on $W_{8l}\square V$. \hfill $\Diamond$

\bigskip
\noindent
We note that the topological $\Zp 2$-invariant $Spin^c$-structure on
$W_{8l}\square V$, when restricted to $V$, is independent of $l$ and may differ from the structure given in Condition \ref{conditions 4k+1}.3 at most by the deck transformation $\delta$.
The induced structure on the boundary $\partial (W_{8l}\square V)$ descends to a topological $Spin^c$-structure on the quotient manifold $\partial (W_{8l}\square V)/\Zp 2$. 

By Theorem \ref{equivariant plumbing theorem}, $W_{8l}\square V$ has a $\Zp 2$-invariant metric of positive scalar curvature which is of product type near the boundary and which restricts to a metric $g_l$ of positive Ricci curvature on the boundary $\partial (W_{8l}\square V)$. Let $\overline g_l$ denote the induced metric on the quotient $\partial (W_{8l}\square V)/\Zp 2$.

The topological structures above define $Spin^c$-structures on $\partial (W_{8l}\square V)$ and $\partial (W_{8l}\square V)/\Zp 2$ \wrt \ $g_l$ and $\overline g_l$, respectively.
Let $\alpha _l$ denote the associated principal $U(1)$-bundle over the quotient. From Condition \ref{conditions 4k+1}.3 and the construction above follows that the first Chern class of $\alpha _l$ is torsion. We fix a flat connection on $\alpha _l$.

We next consider the $Spin^c$-Dirac operator on $(\partial (W_{8l}\square V)/\Zp 2, \overline g_l)$ twisted with the flat zero-dimensional bundle $\tilde \alpha _l:=\alpha _l- \mathbb {1}$. Let $\eta _l$ denote the eta invariant of this operator.

\bigskip
\noindent
\underline {Claim 4:} $\eta _l\neq \eta _{l^\prime}$ if $l\neq l^\prime$.

\bigskip
\noindent
\underline {Proof:} The eta invariant $\eta _l$ can be computed using Donnelly's formula (see Theorem \ref{Dirac injective}, equation (\ref{EQ: twisted eta}) and Theorem \ref{Donnelly}) which yields a sum of local contributions at the fixed point components,
$$-\frac 1 2\eta _l=\sum _{N\subset (W_{8l}\square V)^{\tau}}a_N=\sum _{N\subset W_{8l}^{\tau}}a_N + \sum _{N\subset (V\setminus \mathrm{im}\; \psi)^{\tau}}a_N,$$
where $\tau$ is the non-trivial element of $\Zp 2$.
Note that the sum $\sum _{N\subset (V\setminus  \mathrm{im}\; \psi)^{\tau}}a_N$ is independent of $W_{8l}$. In fact, it only depends on the topological invariant $Spin^c$-structure on $V$ which is independent of $l$. The fixed point manifold $W_{8l}^{\tau}$ consists of $2l+1$ isolated points and the contributions $a_N$, at the fixed points $N\subset W_{8l}^{\tau}$, are all equal and non-zero (the local contributions are equal to the ones in Examples \ref{eta examples}). Hence, $\eta _l\neq \eta _{l^\prime}$ if $l\neq l^\prime$.\hfill $\Diamond$

\bigskip
\noindent
Recall that $\Zp 2$ acts freely on $\partial W_{8l}$.
Since there are only finitely many oriented diffeomorphism types of homotopy $\R P^{4k+1}$s (see \cite{L71}), it follows that there is an infinite subset ${\cal L}\subset \N $ such that for $l\in {\cal L}$ the quotient $\partial W_{8l} /\Zp 2$ is diffeomorphic as an oriented manifold to a fixed homotopy $\R P^{4k+1}$, i.e., each $\partial W_{8l}$, $l\in {\cal L}$, is diffeomorphic to the standard sphere and $\partial W_{8l_1}$, $\partial W_{8l_2 }$, $l_1,l_2 \in {\cal L}$, are equivariantly diffeomorphic as oriented $\Zp 2$-manifolds.

\bigskip
\noindent
\underline {Claim 5:} There exists an infinite subset ${\cal L}^\prime \subset {\cal L}$ such that $\partial (W_{8l_1}\square V)$ and $\partial (W_{8l_2}\square V)$ are equivariantly diffeomorphic as oriented $\Zp 2$-manifolds for $l_1,l_2 \in {\cal L}^\prime$.

\bigskip
\noindent
\underline {Proof:} Recall that the boundary of the equivariant plumbing $W_{8l}\square V$ is given by  
$$\partial (W_{8l}
 {\square } V)=\partial W_{8l}\setminus \varphi _{8l}^- (\overset \circ {D^{2k+1}}\times S^{2k})
\bigcup_{S^{2k}\times S^{2k}} \partial V \setminus \psi (\overset \circ {D^{2k+1}}\times S^{2k}),$$
where $ \varphi _{8l}^- (x,y)$ and $\psi (y,x)$, $(x,y)\in S^{2k}\times S^{2k}$, are identified.

Consider $l_0,l_1,l_2\in {\cal L}$. Since the manifolds $\partial W_{8l_i}$ are equivariantly diffeomorphic, we can choose equivariant orientation preserving diffeomorphism $\tilde\Psi _i:\partial W_{8l_i}\to \partial W_{8l_0}$, $i=1,2$, and can identify $\partial (W_{8l_i}\square  V)$ equivariantly  with
$$\partial W_{8l_0}\setminus \tilde\Psi_i\circ \varphi _{8l_i}^- (\overset \circ {D^{2k+1}}\times S^{2k})
\bigcup_{S^{2k}\times S^{2k}} \partial V \setminus \psi (\overset \circ {D^{2k+1}}\times S^{2k})
,$$
where $\tilde\Psi_i\circ \varphi _{8l_i}^-(x,y)$ and $\psi (y,x)$, $(x,y)\in S^{2k}\times S^{2k}$, are identified.

Let $\varphi _1$ and $\varphi _2$ denote the restrictions of $\tilde\Psi_1\circ\varphi _{8l_1}^-$ and $\tilde \Psi_2\circ \varphi _{8l_2}^-$ to $\overset \circ {D^{2k+1}}\times S^{2k}$, respectively. To prove the claim, we will show that $\varphi _1$ and $\varphi _2$ are ambient isotopic as equivariant embeddings into $\partial W_{8l_0}$ for infinitely many choices of $l_1,l_2$.

The equivariant embedding $\varphi _i:\overset \circ {D^{2k+1}}\times S^{2k}\hookrightarrow   \partial W_{8l_0}$ induces an
embedding $\overline \varphi _i$ of $\R P^{2k}=\{0\}\times S^{2k}/{\pm \id}$ into  $\partial W_{8l_0}/\Zp 2=:P_l^{4k+1}$, $i=1,2$. Recall from Claim 1 that $P_l^{4k+1}$ is an oriented homotopy $\R P^{4k+1}$.

Note that the maps $\overline \varphi _1$ and $\overline \varphi _2$ are homotopic since both induce an isomorphism on fundamental groups. Note also that $\overline \varphi _i$ induces an isomorphism on homotopy groups $\pi _j$ of degree  $j<2k$, and a surjection $\pi _{2k}(\R P^{2k})\to \pi _{2k}(P_l^{4k+1})$. From Theorem \ref{theorem haefliger} it follows that the embeddings $\overline \varphi _1$ and $\overline \varphi _2$ are ambient isotopic.
Hence, $\varphi _1$ and $\varphi _2$ restricted to $\{0\}\times S^{2k}$ are ambient isotopic as equivariant embeddings into $\partial W_{8l_0}$.

Let $F_t$, $t\in [0,1]$, be a diffeotopy of equivariant diffeomorphisms of $\partial W_{8l_0}$ with $F_0=\id $ and $\varphi _1\vert_{\{0\}\times S^{2k}}=F_1\circ \varphi _2\vert_{\{0\}\times S^{2k}}$. By the equivariant tubular neighborhood theorem
% (see for example \cite{K07})
 there exists an equivariant diffeotopy $G_t: \partial W_{8l_0} \to \partial W_{8l_0}$ with $G_0=\id $ such that $\varphi _1$ is equal to $G_1\circ F_1\circ \varphi _2:\overset \circ {D^{2k+1}}\times S^{2k}\hookrightarrow   \partial W_{8l_0}$ up to an equivariant framing. The homotopy classes of equivariant framings are parametrized by the set of homotopy classes of maps $\R P^ {2k}\to SO(2k+1)$ which, in turn, corresponds to the set of isomorphism classes of oriented vector bundles of rank $2k+1$ over the suspension $\Sigma \R P^ {2k}$. The latter is finite since the reduced $KO$-theory of $\Sigma \R P^ {2k}$ is finite (apply, for example, the Atiyah-Hirzebruch spectral sequence) and the Euler class of an oriented vector bundles of rank $2k+1$ is $2$-torsion.
 
Hence, there exists an infinite subset ${\cal L}^\prime \subset {\cal L}$ such that the restrictions of $\tilde\Psi_1\circ\varphi _{8l_1}^-$ and $\tilde\Psi_2\circ \varphi _{8l_2}^-$ to $\overset \circ {D^{2k+1}}\times S^{2k}$ are ambient isotopic as equivariant embeddings into $\partial W_{8l_0}$ for $l_1,l_2\in {\cal L}^\prime $. It follows that $\partial (W_{8l_1}\square  V)$ and
$\partial (W_{8l_2}\square V)$
are equivariantly diffeomorphic as oriented $\Zp 2$-manifolds for $l_1,l_2 \in {\cal L}^\prime$.  \hfill $\Diamond$

\bigskip
\noindent
From Claim 2 and Claim 5, we conclude that $M=\partial V$ has a free $\Zp 2$-action such that  for every $l\in {\cal L}^\prime$ $\overline M:=M/\Zp 2$ is orientation preserving diffeomorphic to $\partial (W_{8l}\square V)/\Zp 2$.
For each $l\in {\cal L}^\prime$, we fix such a diffeomorphism $\Psi_l:\overline M\overset \cong \to \partial (W_{8l}\square V)/\Zp 2$. 

Using the diffeomorphism $\Psi_l$, we can pull back to $\overline M$ the $Ric>0$ metric $\overline g_l$, the topological $Spin^c$-structure, and the virtual bundle $\tilde \alpha _l=\alpha _l -\mathbb {1}$. Let $\eta (\overline M)_l$ denote the eta invariant of the corresponding twisted $Spin^c$-Dirac operator for $(\overline M, \Psi_l ^*(\overline g_l))$. Then 
$\eta (\overline M)_l=\eta _l$.

Next, we choose an infinite subset ${\cal L}^{\prime \prime}\subset {\cal L}^\prime$, such that for all $l\in {\cal L}^{\prime \prime}$ the pullback bundles $\Psi_l ^*(\alpha _l)$ are isomorphic and the pullbacks of the $Spin^c$-structures on $\partial (W_{8l}\square V)/\Zp 2$ to $\overline M$  are equivalent as topological  $Spin^c$-structures.

Since $\eta (\overline M)_l\neq \eta (\overline M)_{l^\prime}$ for $l\neq l^\prime$, it follows from Corollary \ref{eta sspinc constant} that the $Ric>0$ metrics $\Psi_l ^*(\overline g_l)$ and $\Psi_{l^\prime} ^*(\overline g_{l^\prime})$ on $\overline M$ belong to different connected components of $\mathcal{R}_{Ric>0}(\overline M)$. Let ${\cal G}\subset \text{Diff}(\overline M)$ denote the subgroup of diffeomorphisms preserving the topological $Spin^c$-structure on $\overline M$. Since the eta invariant does not change under pullbacks of diffeomorphisms in  ${\cal G}$, it follows from the path lifting property for the map $\mathcal{R}_{Ric>0}(\overline M)\to \mathcal{R}_{Ric>0}(\overline M)/{\cal G}$ that the metrics $\Psi_l ^*(\overline g_l)$, $l\in {\cal L}^{\prime \prime}$, represent infinitely many different connected components in $\mathcal{R}_{Ric>0}(\overline M)/{\cal G}$. Since $\cal G$ has finite index in the full diffeomorphism group $\text{Diff}(\overline M)$, the same is true for the moduli space $\mathcal{M}_{Ric>0}(\overline M)=\mathcal{R}_{Ric>0}(\overline M)/\text{Diff}(\overline M)$ (see also Section \ref{moduli}). Hence, the metrics $\overline g_l$, $l\in {\cal L}^{\prime \prime}$,  represent infinitely many different geometries of positive Ricci curvature on $\overline M=M/\Zp 2$.
This finishes the proof of the theorem.
\end{proof}

\section{Proof of Theorem \ref{plumbing theorem}}\label{proof plumbing theorem}
Let $V$ be a smooth manifold with nice metric coordinate function $\psi: D^p\times D^{q}\hookrightarrow V$, let $W$ be a plumbing of disk bundles over spheres according to a tree, and let $W\square V$ be the plumbing \wrt \ $\psi$ and a coordinate function of a disk bundle $E\to S^q$ of $W$.

We want to show that $W\square V$ has a metric of positive scalar curvature which is of product type near the boundary and which restricts to a metric of positive Ricci curvature on the boundary $\partial (W\square V)$.

It suffices to discuss the construction of the metric for the plumbing $E\square V$ of a disk bundle $E\to S^q$ and $V$. The construction can be iterated to yield the metric on $W\square V$. More precisely, the outcome of the iteration given below will be a metric on $W\square V$ which satisfies all the properties stated in the theorem except for the product structure near the boundary. The latter property will be taken care of at the end of the proof.

We decompose the base $S^q$ of the bundle $E$ as a union
$$S^q=B^q_-\cup Z_1\cup Z_2\cup Z_3\cup B^q_+,$$ where $B^q_{-}$ is a closed ball of radius $a_1>0$, $Z_i$ is a cylinder of the form $Z_i= I_i\times S^{q-1}$, $I_i=[a_i,b_i]$, $a_1<b_1=a_2<b_2=a_3<b_3$, and $B^q_{+}=D^q$ is another closed ball.

Let $P\to S^q$ denote the principal $SO(p)$-bundle associated to $E\to S^q$. We fix a trivialization of $P$ over $S^q\setminus B^q_-$ and a principal connection on $P\to S^q$ which is trivial over $S^q\setminus B^q_-$. Hence, the twisting of the bundle is completely contained in the part over $B^q_-$. In the following, we will identify the restriction of $P$ to $S^q\setminus B^q_-$ with a principal product bundle $S^q\setminus B^q_- \times SO(p)\to S^q\setminus B^q_-$ with trivial principal connection and use corresponding identifications for the restrictions of $E$ and $\partial E$.

The plumbing $E\square V$ is obtained from the disjoint union $E\coprod V$ by first identifying $E_{\vert B^q_{+}}=B^q_{+}\times D^p = D^q\times D^p$ with $\psi(D^p\times D^q)\subset V$ and then straightening out the angles. The boundary of the plumbing is given by 
$$\partial (E\square V)=(\partial E)_{\vert B^q_-\cup Z_1\cup Z_2\cup Z_3}
\bigcup _{S^{q-1}\times S^{p-1}}
\partial V\setminus \psi(\overset \circ {D^p}\times S^{q-1}),$$
where $(\partial E)_{\vert \{b_3\}\times S^{q-1}}= S^{q-1}\times S^{p-1}$ and $\psi(S^{p-1}\times S^{q-1})\cong S^{p-1}\times S^{q-1}$ are identified via cross identification.

\medskip
\noindent
The metric on $E$ will be constructed using the following steps.

\medskip
\noindent
%\begin{enumerate}
\subsection{The metric on the restriction of $E$ to $S^q\setminus Z_3$}

\subsubsection{Metric $g_B$ on $S^q\setminus Z_3$} We choose a metric $\tilde g_B$ on $B^q_{-}\cup Z_1\cup Z_2$ such that $(B^q_{-}\cup Z_1\cup Z_2,\tilde g_B)$ is a geodesic ball in an elliptic paraboloid (centered at the origin of $B^q_{-}$) and such that its boundary is isometric to the unit sphere $S^{q-1}(1)$. The metric is a warped product metric of the form
$$dt^2 +k^2(t)ds^2_{q-1}, \quad t\in [0,b_2],$$
with $k(b_2)=1$ and $k^\prime (b_2)>0$ small (later we will assume that $k^\prime (b_2)<1/2$). Note that $(B^q_{-}\cup Z_1\cup Z_2,\tilde g_B)$ has positive sectional curvature and its boundary has positive second fundamental form with principal curvatures going to zero for $b_2\to \infty$.  Next, we choose a finite number of disjoint points $p_i$ in the interior of $Z_1$ (the number depends on the number of plumbings which involve $E$). We then, using Proposition \ref{Proposition local form}, change the metric $\tilde g_B$ locally at the points $p_i$ to obtain a $Ric>0$ metric  $g_B$ such that its restriction to a neighborhood of each $p_i$ is isometric to a small spherical cap $D^q_{\epsilon_i} (1)$ in the unit sphere.

The equip $B^q_{+}$ with the metric (again denoted by $g_B$) of a hemisphere in $S^q(\rho)$, $\rho >0$.

\subsubsection{Metric $g_+$ on $E_{\vert B^q_+}$} On the restriction of $E$ to $B^q_+$, $E_{\vert B^q_+}=B^q_+\times D^p=D^q\times D^p$, we choose the metric $g_+$ such that $(E_{\vert B^q_+},g_+)$ is equal to $B_{\pi /2} ^q(\rho) \times D^p_R(N)$. Here, $R$, $N$, and $\rho$ are the parameters given by the nice metric coordinate function $\psi$. Recall from Definition \ref{nice coordinate function} that for every $\rho $ smaller than a constant, $V$ has a metric $g_V$ such that $B_{\pi /2} ^q(\rho) \times D^p_R(N)$ is isometric via cross identification to the image of $\psi$ in $(V,g_V)$ and such that $g_V$ has $scal >0$ on $V$, has $Ric>0$ on $\partial V$, and $\partial V$ has nonnegative mean curvature.

\subsubsection{Metric $g_{-\cup 1}$ on $E_{\vert B^q_-\cup Z_1}$} Using Vilms' theorem for the principal connection above and fiber $F=B^p_{\pi /2}(r)$, $r>0$, we obtain a metric $g_{-\cup 1}$ on $E_{\vert B^q_-\cup Z_1}$ such that $(E_{\vert B^q_-\cup Z_1},g_{-\cup 1})\to (B^q_-\cup Z_1,g_B)$ is a Riemannian submersion with totally geodesic fibers isometric to $B^p_{\pi /2}(r)$ . 

Note that $E_{\vert Z_1}$ equipped with this metric is isometric to $(Z_1,g_B)\times B^p_{\pi /2}(r)$ and has positive Ricci curvature. After shrinking the fibers sufficiently (i.e., making $r$ sufficiently small), we may assume that $E_{\vert B^q_-\cup Z_1}$ and $(\partial E)_{\vert B^q_-\cup Z_1}$ have positive Ricci curvature (see Proposition \ref{Proposition shrinking}).

By construction there is an isometric embedding $\psi_i: D^q_{\epsilon_i} (1) \times B^p_{\pi /2}(r)\hookrightarrow E$ onto a trivialization of $E$ at $p_i$. Note that $r$ can be chosen to be arbitrarily small. The embedding $\psi _i$ is a neat embedding and can be used for subsequent plumbings.

\subsubsection{Metric $g_2$ on $E_{\vert Z_2}$} Next, we consider the restriction of $E$ to $Z_2$, $E_{\vert Z_2}=Z_2\times D^p$. We equip the fiber $D^p$ of $E$ over $(t,x)\in Z_2=I_2\times S^{q-1}$ with a spherical cap metric $g(t)$ such that $(D^p, g(t))=B^p_{\epsilon (t)}(r)$. Here,
$$\epsilon (t):I_2=[a_2,b_2]\to \R _{>0}$$ is a smooth function which is constant near the endpoints and monotonously decreasing. More precisely, there are constants $\tau _1, \tau _2$ with $a_2< \tau _1 < \tau _2 <b_2$ such that $\epsilon (t)=\pi /2$ for $t\leq \tau _1$, $\epsilon (t)=\epsilon (b_2)$ for $t\geq \tau _2$, where $0<\epsilon (b_2)<\pi /2$ will be determined later (see (\ref{bc 1})),  and $\epsilon ^\prime (t)<0$ for $\tau _1 <t <\tau _2$. 

This determines a metric $g_2$ on $E_{\vert Z_2}$ which is of the form $$g_2=dt^2 +k^2(t)ds^2_{q-1}+g(t),\quad t\in [a_2,b_2].$$
Note that $((\partial E)_{\vert  Z_2},g_2)$ is isometric to $(Z_2,g_B)\times S^{p-1}(r)$ and the restriction of $(E_{\vert Z_2},g_2)$ to a suitable neighborhood of $E_{\vert \{b_2\}\times S^{q-1}}\subset E_{\vert Z_2}$ is isometric to $((\tau_2, b_2]\times S^{q-1}, g_B)\times B^p_{\epsilon (b_2)}(r)$.

The metrics $g_{-\cup 1}$ and $g_2$ define a smooth metric on $E_{\vert B^q_-\cup Z_1\cup Z_2}$ for which $E_{\vert B^q_-\cup Z_1\cup Z_2}\to B^q_-\cup Z_1\cup Z_2$ is a Riemannian submersion. The restriction to $(\partial E)_{\vert B^q_-\cup Z_1\cup Z_2}$ is a Riemannian submersion with totally geodesic fibers isometric to $S^{p-1}(r)$. After shrinking the fibers sufficiently (i.e., making $r$ sufficiently small) the metric on $(\partial E)_{\vert B^q_-\cup Z_1\cup Z_2}$ has positive Ricci curvature (see Proposition \ref{Proposition shrinking}). The fibers of $E$ over points of $Z_2$ are not all totally geodesic since they are not all isometric. Nevertheless, by shrinking the fibers sufficiently, the positive curvature of the fibers dominates the other curvatures and one obtains a metric of positive scalar curvature on $E_{\vert B^q_-\cup Z_1\cup Z_2}$ (this follows from \cite[Cor. 9.37 and \S 9.G]{B87}).

Hence, by taking $r$ sufficiently small, we may assume that $g_{-\cup 1}$ and $g_2$ have $scal>0$ on $E_{\vert B^q_-\cup Z_1\cup Z_2}$ and have $Ric>0$ on the boundary $(\partial E)_{\vert B^q_-\cup Z_1\cup Z_2}$.

\bigskip
In the next step we will define a metric on $E_{\vert Z_3}$ which restricts to a $Ric>0$ metric on $(\partial E)_{\vert Z_3}$ and which is compatible with the metrics above, after possibly rescaling. This is the crucial step in the proof, the construction is essentially due to Reiser \cite{R23,R25}.

\subsection{The metric $\bar g_{f,h}$ on the restriction of $E$ to $Z_3$} The metric on $E_{\vert Z_3}$ will be constructed from another metric by a smoothing (see \ref{subsubsection smoothing}). The latter metric  depends on two smooth functions $f,h:[a_3,b_3]\to \R _{>0}$ (to be specified later) and will be denoted by $\bar g_{f,h}$. Its restriction to the boundary 
$(\partial E)_{\vert Z_3}=[a_3,b_3]\times S^{q-1}\times S^{p-1}$ is a doubly warped product metric of the form $$g_{f,h}=dt^2+h^2(t)ds^2_{q-1} + f^2(t)ds^2_{p-1}.$$
The construction of $\bar g_{f,h}$ will be carried out in Subsections \ref{subsubsection gfh}--\ref{subsubsection functions}.

\medskip
To describe the metric $\bar g_{f,h}$, we will, following a suggestion of Reiser, first identify $[a_3,b_3]\times D^p$ with a subspace $X$ of the Riemannian cylinder $\R \times S^p (\beta N)$, where $\beta >0$ will be specified later. The metric of the round sphere $S^p (\beta N)$ will be written as a warped product metric $ds^2 + (\beta N)^2 \sin ^2(\frac s {\beta N}) ds^2_{p-1}$, $s\in [0, \beta N\pi ]$.

\subsubsection{Description of $X\subset \R \times S^p (\beta N)$} For $f:[a_3,b_3]\to \R _{>0}$ let $$\varphi (t):[a_3, b_3]\to [a_3, \tilde b_3 ],\quad t\mapsto \tilde t ,$$ be a diffeomorphism
with $\varphi (a_3)=a_3$
 and 
$$F(\tilde t):[a_3, \tilde b_3]  \to (0, \beta N \pi)$$ a smooth function such that the curve $t \to (\varphi (t), F(\varphi (t))$ in $\R ^2$ is parametrized by arc length and such that $$f(t)=\beta N\sin \left (\frac {F(\varphi (t))}{\beta N}\right)\text{ for } a_3\leq t\leq b_3.$$ Note that the two conditions determine $F$ and $\varphi $ uniquely. Now let
$$X:=\{ (\tilde t, (s, x))\in \R \times S^p \, \mid \, \tilde t\in [a_3,\tilde b_3],\, s\leq F(\tilde t), \, x\in S^{p-1}\}\subset \R \times S^p.$$ We equip $X$ with the metric $g_X$ induced by the inclusion of $X$ into the Riemannian cylinder $\R \times S^p (\beta N)$. Note that $(X,g_X)$ has $scal>0$ and nonnegative sectional curvature.

By construction, the fiber over $\tilde t=\varphi (t)$ of the projection $X\to [a_3,\tilde b_3]$ is a geodesic ball $D^p_{F(\tilde t)}(\beta N)$ with boundary isometric to a round sphere of radius $f(t)=\beta N \sin \left(\frac {F(\tilde t)} {\beta N}\right)$.

In the following we will identify $[a_3,b_3]\times D^p$ with $X$ using the diffeomorphism
$$\Phi :[a_3,b_3]\times D^p\to X, \quad (t,(s,x))\mapsto (\varphi ( t),(F(\varphi ( t))s,x)),$$ where $0< s\leq 1$, $x\in S^{p-1}$, are polar coordinates for $D^p$.

\subsubsection{Metric $\bar g_{f,h}$}\label{subsubsection gfh}  For $f,h:[a_3,b_3]\to \R _{>0}$ 
the metric $\bar g_{f,h}$ on $E_{\vert Z_3}=[a_3,b_3]\times S^{q-1}\times D^p$ is defined by requiring that

\begin{itemize}
\item $\bar g_{f,h}$ restricted to $[a_3,b_3]\times \{pt\} \times D^p$ is equal to the pullback of $g_X$ under $\Phi$ \, $\forall \, pt\in S^{q-1}$,
\item $\bar g_{f,h}$ restricted to $\{t\}\times S^{q-1}\times \{pt\}$ is equal to $h^2(t)ds^2_{q-1}$  \, $\forall \, (t,pt)\in [a_3,b_3]\times D^p$, and
\item the tangent spaces of $S^{q-1}$ and $[a_3,b_3]\times D^p$ at every point of $[a_3,b_3]\times S^{q-1}\times D^p$ are orthogonal with respect to $\bar g_{f,h}$.
\end{itemize}

Note that
$$\partial X:=\{ (\tilde t, (F(\tilde t), x))\, \mid \, \tilde t\in [a_3,\tilde b_3],\, x\in S^{p-1}\}=\{(\varphi (t), (F(\varphi( t)),x)) \, \mid \, t\in [a_3, b_3], \, x\in S^{p-1}\}
$$
equipped with the metric induced by $g_X$ is isometric to $([a_3,b_3]\times S^{p-1},dt^2+ f^2(t)ds^2_{p-1})$ via $\Phi _{\vert [a_3,b_3]\times S^{p-1}}$ and that $\bar g_{f,h}$ restricts to $g_{f,h}$ on $(\partial E)_{\vert Z_3}=[a_3,b_3]\times S^{q-1}\times S^{p-1}$. A direct computation shows that $(E_{\vert Z_3},\bar g_{f,h})$ has positive scalar curvature.% DETAILS?

\medskip
In the following we will consider the metrics $\alpha ^2 g_{-\cup 1}$, $\alpha ^2 g_2$, $\bar g_{f,h}$, and $\beta ^2 g_+$ on the corresponding pieces of $E$  as well as $\beta ^2 g_V$ on $V$, where $\alpha ,\beta >0$ will be determined later. 

We want to construct smooth functions $f,h:[a_3,b_3]\to \R _{>0}$ such that $((\partial E)_{\vert Z_3},g_{f,h})$ has positive Ricci curvature, $\bar g_{f,h}$ agrees with $\alpha ^2 g_2$ and $\beta ^2 g_+$ at the respective boundary part of $E_{\vert Z_3}$, and Perelman's conditions on the second fundamental forms (see Theorem \ref{Perelman gluing}) are fulfilled at the boundary components of $(\partial E)_{\vert Z_3}$.

First note that the second fundamental forms of the left and right boundary components of $((\partial E)_{\vert Z_3},g_{f,h})$ are represented by

\begin{equation}\label{eq 2nd fundamental form Z3} \hat {\mathrm{I\!I}}_{a_3}=\left( \begin{smallmatrix} -\frac {h^\prime(a_3)} {h(a_3)}\mathrm{I}_{q-1} & 0\\ 0 &  -\frac {f^\prime(a_3)} {f(a_3)}\mathrm{I}_{p-1} \end{smallmatrix} \right)\quad \text{and} \quad \hat {\mathrm{I\!I}}_{b_3}=\left( \begin{smallmatrix} \frac {h^\prime(b_3)} {h(b_3)}\mathrm{I}_{q-1} & 0\\ 0 &  \frac {f^\prime(b_3)} {f(b_3)} \mathrm{I}_{{p-1}}\end{smallmatrix} \right),\end{equation}
respectively.

\subsubsection{Left boundary condition}
Recall that $(Z_2,g_B)$ is part of an elliptic paraboloid and that $E_{\vert \{b_2\}\times S^{q-1}}\subset E_{\vert Z_2}$ has a neighborhood which is isometric to $((\tau _2, b_2]\times S^{q-1}, dt^2 +k^2(t)ds^2_{q-1})\times B^p_{\epsilon (b_2)}(r)$ with $k(b_2)=1$ and $k^\prime (b_2)>0$ small (for technical reasons we will assume in the following that $k^\prime (b_2)<1/2$).

Hence, the second fundamental form  $\underline{\mathrm{I\!I}}_{b_2}$ of $\{b_2\}\times S^{q-1}(1)\subset Z_2$ is represented by $\hat {\underline{\mathrm{I\!I}}}_{b_2}=\frac {k^\prime (b_2)}{k(b_2)}\mathrm{I}_{q-1}$ and is positive definite.
It follows that the second fundamental form $\mathrm{I\!I}_{b_2}$
of $(\partial E)_{\vert \{b_2\}\times S^{q-1}}\subset (\partial E)_{\vert Z_2}$ is represented by the $(p+q-2)\times (p+q-2)$-block matrix $$\hat {\mathrm{I\!I}}_{b_2}=\left (\begin{matrix} \hat {\underline {\mathrm{I\!I}}}_{b_2} & 0\\0 & 0\end{matrix}\right)=\left (\begin{matrix} \frac {k^\prime (b_2)}{k(b_2)}\mathrm{I}_{q-1} & 0\\0 & 0\end{matrix}\right).$$ Note that if we replace the metric $g_2$ by $\alpha ^2g_2$, then $\hat {\underline{\mathrm{I\!I}}}_{b_2}$ and $\hat {\mathrm{I\!I}}_{b_2}$ have to be replaced by $\frac 1 \alpha \hat {\underline {\mathrm{I\!I}}}_{b_2}$ and $\frac 1 \alpha \hat {\mathrm{I\!I}}_{b_2}$, respectively.

In order to satisfy the left boundary conditions, we will choose $f$ and $h$ near $t=b_2=a_3$ such that $\bar g_{f,h}$ and $\alpha ^2g_2$ agree on $E_{\vert \{b_2\}\times S^{q-1}}$ and such that the condition $\frac 1 \alpha \hat{\mathrm{I\!I}}_{b_2}+\hat{\mathrm{I\!I}}_{a_3}\geq 0$ on the second fundamental forms given in Theorem \ref{Perelman gluing}  is satisfied.

The first condition means that $B^p_{\epsilon (b_2)}(\alpha r)=D^p_{F(a_3)}(\beta N)$ (i.e.,  $\epsilon (b_2)=\frac {F(a_3)} {\beta N}=\arcsin \left(\frac {f(a_3)} {\beta N}\right)$, $\alpha r=\beta N\sin \left (\frac {F(a_3)} {\beta N}\right)=f(a_3)$) and that $h(a_3)= \alpha$.

The second condition means that $h^\prime (a_3)\leq k^\prime (b_2) $ and $f^\prime (a_3)\leq 0$. 
In the following we will impose the conditions
\begin{equation}\label{bc 1}\epsilon (b_2)=\arcsin \left (\frac {f(a_3)} {\beta N}\right), \quad h(a_3)= \alpha , \quad h^\prime (a_3)\leq  \lambda, \quad f(a_3)=\alpha r, \quad \text{and } \quad f^\prime (a_3)= 0,\end{equation}
where $\lambda := k^\prime (b_2)<1/2$.

\subsubsection{Right boundary condition}
Recall that $(E_{\vert B^q_+},g_+)$ is equal to $B_{\pi /2} ^q(\rho) \times D^p_R(N)$ and isometric to the image of the nice metric coordinate function $\psi: D^p_R(N)\times B_{\pi /2} ^q(\rho)\hookrightarrow (V,g_V)$ via cross identification. The second fundamental form $\mathrm{I\!I}_{\psi}$ of $B_{\pi /2} ^q(\rho) \times \partial D^p_R(N)$ viewed as the boundary of $V\setminus \psi ({\overset \circ {D^p_R}(N)}\times B_{\pi /2} ^q(\rho))$ is represented by  $$\hat {\mathrm{I\!I}}_{\psi}=(\begin{smallmatrix}0 & 0\\0 & -\hat{\mathrm{I\!I}}_{R,N}\end{smallmatrix})$$ where $\hat {\mathrm{I\!I}}_{R,N}=\frac 1 N \cot (R/N) \mathrm{I}_{p-1}>0$ represents the second fundamental form of the boundary of $D^p_R(N)$ (\wrt \ the inward pointing normal vector).
 
We replace the metrics $g_+$ and $g_V$ by $\beta ^2g_+$ and $\beta ^2g_V$, respectively. Of course, the cross identification in the plumbing process remains an isometry for the rescaled metrics. 

In order to satisfy the right boundary condition given in Theorem \ref{Perelman gluing}, we need to choose $f$ and $h$ at $t=b_3$ such that, firstly, $\bar g_{f,h}$ and $\beta ^2g_+$ agree on $E_{\vert \{b_3\}\times S^{q-1}}$, i.e.,
$$D^p_{F(\tilde b_3)}(\beta N)\times S^{q-1}(h(b_3))=D^p_{\beta R}(\beta N)\times \partial B_{\pi /2} ^q(\beta \rho),$$ and, secondly,  the condition on the second fundamental forms is satisfied for $(\partial E)_{\vert \{b_3\}\times S^{q-1}}\subset (\partial E)_{\vert  Z_3}$ and for $\psi (\partial D^p_R(N)\times\partial B_{\pi /2} ^q(\rho))\subset \partial V\setminus \psi ({\overset \circ {D^p_R}(N)}\times \partial B_{\pi /2} ^q(\rho))$.
We will impose the following stronger conditions 
\begin{equation}\label{bc 2} h(b_3)= \beta \rho , \quad h^\prime (b_3)= 0, \quad f(b_3)=\beta  N\sin (R/N) \text{ and }f^\prime (b_3)= \cos (R/N).\end{equation}

\medskip
If $h$ and $f$ satisfy conditions (\ref{bc 1}) and (\ref{bc 2}), then the smooth metrics $\alpha ^2g_{-\cup 1}$,  $\alpha ^2g_2$, $\bar g_{f,h}$, and $\beta ^2g_V$ define a continuous metric on $V^\prime:=E \square V$.

In \cite{R23} Reiser constructs smooth functions $f,h:[a_3,b_3]\to \R _{>0}$ which satisfy conditions (\ref{bc 1}) and (\ref{bc 2}) such that, in addition, the  metric $g_{f,h}=dt^2+h^2(t)ds^2_{q-1} + f^2(t)ds^2_{p-1}$ on $(\partial E)_{\vert Z_3}=[a_3,b_3]\times S^{q-1}\times S^{p-1}$ has positive Ricci curvature. We will review this construction which will be also important for the mean curvature estimates below.

\subsubsection{Functions $f,h$}\label{subsubsection functions}
The smooth functions $f,h:[a_3,b_3]\to \R _{>0}$ will each be constructed from two pieces, defined on $[a_3,t_1]$ and  $[t_1,b_3]$, respectively ($t_1 $ will be determined later). When restricted to $[a_3,t_1]$ (resp. $[t_1,b_3]$) these pieces will be denoted by $f_l,h_l$ (resp. $f_r, h_r$).

The functions $f_l,h_l:[a_3,t_1]\to \R _{>0}$
are of the form 
$$h_l(t):=ah_0(t), \quad f_l(t):=bf_C(t), \quad a,b>0, \quad C\in (0,1),$$
with $h_0(t),f_C(t)$ defined as follows:
\begin{itemize}
\item $h_0:[a_3,\infty)\to \R_{>0}$ is determined by $h_0^\prime(t)=e^{-\frac 1 2 h_0^2(t)}$, $h_0(a_3)= \sqrt { -2 \ln (\lambda )}$, and
\item $f_C:[a_3,\infty)\to \R _{>0}$ is determined by $f^{\prime \prime}_C(t)=Ce^{-h_0^2(t)}f_C(t)$, $f_C(a_3)=1$, $f^\prime _C(a_3)=0$.
\end{itemize}

The functions $h_0,f_C$ have the following properties which we state for further reference:
$$h_0(t),h^\prime _0(t)>0, h^{\prime \prime}_0(t)=-h_0 (t)e^{-h_0^2(t)}<0, h^\prime _0(a_3)=\lambda,  h^{\prime \prime}_0(a_3)=-\lambda ^2\sqrt {-2 \ln \lambda },$$
$$f_C(t), f^\prime _C(t), f_C^{\prime \prime}(t)>0\text{ for } t>a_3, f_C^{\prime \prime}(a_3)=C\lambda ^2,$$
$$\frac {f_C^\prime(t)} {f_C(t)h_0(t)h_0^\prime(t)}\in [0,1], \lim _{t\to \infty} f_C(t) = \infty , \lim _{t\to \infty} f_C^\prime(t)  = \infty, \text{ and }\lim _{t\to \infty} f_C(t) h_0^\prime (t)= 0  .$$

The left boundary conditions (\ref{bc 1}) for $f_l$ and $h_l$ replacing $f$ and $h$, respectively, is satisfied if $a\leq 1$, $\alpha =a\sqrt { -2 \ln (\lambda )}$, $b=\alpha r$ and $\epsilon (b_2)=\arcsin \left (\frac {f(a_3)} {\beta N}\right)$.

Reiser shows that for $a,b,C$ sufficiently small the Ricci curvature of $\left ((\partial E)_{\vert Z_3}=[a_3,b_3]\times S^{q-1}\times S^{p-1},g_{f,h}\right )$ is positive for $t\leq t_b$ where $\lim _{b\to 0} t_b = \infty$ and $\lim _{b\to 0} f_l^\prime (t_b)=1$ (see \cite{R23}).

In the following, we will choose $b$ and $t_1<t_b$ such that $f_l^\prime (t_1)> \cos (R/N)$. Moreover, for $\tilde \epsilon  < \sin (R/N)$ we will choose $\rho >0$ sufficiently small such that $\frac {f_l(t_1)}{h_l(t_1)} \cdot \frac \rho N <\tilde \epsilon $ and choose $\beta >\frac {h_l(t_1)}\rho$ such that $\frac {f_l(t_1)}{\beta N}<\tilde \epsilon $. Note that  $f_l(t_1) < \beta N \sin (R/N)$ for these choices. This completes the description of the pieces $f_l,h_l:[a_3,t_1]\to \R _{>0}$.

Next, we describe how to extend these functions to $[a_3,b_3]$ for some $b_3>t_1$ using  pieces $f_r,h_r$. Let $h_r:[t_1,b_3]\to \R _{>0}$ be a smooth function, such that $h_r(t)$ extends $h_l(t)$ smoothly in $t=t_1$, such that $h_r^\prime (t)\geq 0$, $h_r^{\prime \prime}  (t)\leq 0$ for $t\in [t_1,b_3]$, and such that the metric  $dt^2+h_r^2(t)ds^2_{q-1}$ is in a neighborhood of $\{b_3\}\times S^{q-1}\subset [a_3,b_3]\times S^{q-1}$ isometric to a neighborhood of $\partial B^q_{\pi /2}(\beta \rho)\subset B^q_{\pi /2}(\beta \rho)$ (since $h_l(t_1)<\beta \rho$ the function $h_r$ exists). In particular, $ h_r (b_3)=\beta \rho,  h_r^\prime (b_3)=0, h_r^{(k)}(b_3)=(\beta \rho)^{1-k} \sin ^{(k)} (x)(\pi /2), k\geq 2$.

Let $f_r:[t_1,b_3]\to \R _{>0}$ be a smooth function, such that
$$f_r(t_1)=f_l(t_1), f_r^\prime (t_1)= f_l^\prime (t_1), $$
$$f_r^\prime (t)\in (\cos (R/N), 1), f_r^{\prime \prime}(t)\leq 0, \quad \forall \, t\in [t_1,b_3],$$
$$ \tilde f(b_3)=\beta N \sin ( R/N), \tilde f^\prime (b_3)=\cos (R/N), \text{ and } \tilde f^{(k)}(b_3)=(\beta /N)^{1-k} \cos ^{(k-1)}(R/N), k\geq 2.$$ 

It follows from a direct computation (using the formulas \cite[(3.9)-(3.11)]{R23}) that $f_r,h_r$ can be chosen such that $([t_1,b_3]\times S^{q-1}\times S^{p-1},dt^2+h_r^2(t)ds^2_{q-1} + f_r^2(t)ds^2_{p-1})$ has positive Ricci curvature.

The functions $h_l$ and $h_r$ define a smooth function $h:[a_3,b_3]\to \R _{>0}$. The functions $f_l$ and $f_r$ define a $C^1$-function on $[a_3,b_3]$ which is smooth away from $t_1$.

Applying Theorem \ref{Perelman gluing} one sees that the functions $f_l$ and $f_r$ can be smoothed at $t=t_1$ 
to obtain a smooth function $f$ on $[a_3, b_3]$, such that $(\partial E)_{\vert Z_3}=[a_3,b_3]\times S^{q-1}\times S^{p-1}$ equipped with the metric $g_{f,h}:=dt^2+h^2(t)ds^2_{q-1} + f^2(t)ds^2_{p-1}$ has positive Ricci curvature.

\subsubsection{Metric gluing of boundaries}\label{subsubsection smoothing} Note that the functions $f,h$ satisfy the right boundary condition (\ref{bc 2}). In the following we will identify  ($[a_3,b_3]\times D^{p}, dt^2 + f^2(t)ds^2_{p-1})$ via $\Phi$ isometrically with $X\subset [a_3,\tilde b_3]\times S^p(\beta N)$, will use $\Phi$ to identify $E_{\vert Z_3}=[a_3,b_3]\times S^{q-1}\times D^p$ with $X\times S^{q-1}\subset [a_3,\tilde b_3] \times S^p \times S^{q-1}$, and will denote the metric on $X\times S^{q-1}$ induced by $\bar g_{f,h}$ by $\hat g_{f,h}$.

Note that $F(\tilde b_3)=\beta R$ (since $f(b_3)=\beta N \sin ( R/N)$), that the fiber of $X\to [a_3 , \tilde b_3]$ over $\tilde b_3$ is a geodesic ball $D^p_{\beta R}(\beta N)$, and that $(E_{\vert \{b_3\}\times S^{q-1}},\bar g_{f,h})$ is isometric to $\partial B^q_{\pi /2}(\beta \rho) \times D^p_{\beta R}(\beta N)=S^{q-1}(\beta \rho) \times D^p_{\beta R}(\beta N)$.
Recall that $(E_{\vert B^q_+},\beta ^2 g_+)$ is equal to $B_{\pi /2} ^q(\beta \rho) \times D^p_{\beta R}(\beta N)$ and isometric to the image of the nice metric coordinate function $\psi: D^p_{\beta R}(\beta N)\times B_{\pi /2} ^q(\beta \rho)\hookrightarrow (V,\beta ^2g_V)$ via cross identification. We may assume that the embedding extends to an isometric embedding of $D^p_{\beta R+\delta}(\beta N)\times B_{\pi /2} ^q(\beta \rho)$ into $(V,\beta ^2 g_V)$ for some $\delta >0$.

It follows from the properties of $f$ and $h$ that
\begin{itemize}
\item $(X\times S^{q-1},\hat g_{f,h}) $ and $(V,\beta ^2 g_V)$ can be glued isometrically along $S^{q-1}(\beta \rho) \times D^p_{\beta R}(\beta N)$,
\item $(\partial X\times S^{q-1}) \cup \partial V$ is smooth, the curve $\tilde t \mapsto (\tilde t, F(\tilde t))$ provides a straightening of the angles in the plumbing construction for $E$ and $V$, and
\item the resulting metric on $(X\times S^{q-1})\cup V$ is smooth, has positive scalar curvature and restricts to a $Ric >0$ metric on $(\partial X\times S^{q-1})\cup \partial V$.
\end{itemize}
Since $(E_{\vert Z_3}, \bar g_{f,h})$ is isometric to $(X\times S^{q-1},\hat g_{f,h})$, the corresponding properties hold for $E\square V$, i.e., the function $f$ provides a smoothing of the corners via the curve above, the metric on $E\square V$ is smooth except maybe for $t=a_3$, has $scal>0$, and restricts to a $Ric>0$ metric on $\partial (E\square V)$ (away from $t=a_3$).

Next, we consider the metric $\bar g_{f,h}$ near $E_{\vert \{a_3\}\times S^{q-1}}$. Since the functions $f,h$ satisfy the left boundary conditions (\ref{bc 1}), $(E_{\vert \{a_3\}\times S^{q-1}}, \bar g_{f,h})$ and $(E_{\vert \{b_2\}\times S^{q-1}}, \alpha ^2g_2)$ are isometric (and isometric to $S^{q-1}(\alpha )\times B^p_{\epsilon (b_2)}(\alpha r)$). Hence, $E_{\vert Z_3}$ and $E_{\vert Z_2}$ can be glued isometrically along $S^{q-1}(\alpha )\times B^p_{\epsilon (b_2)}(\alpha r)$. The resulting metric on $E_{\vert Z_2\cup Z_3}$ is continuous and smooth away from $t=b_2=a_3$.

Note, that if one identifies $(\tau _2, b_2]\times B^p_{\epsilon (b_2)}(r)$ with $$Y:=\{ (\tilde t, (s, x))\in \R \times S^p \, \mid \, \tilde t\in [\tau _2 ,a_3],\, s\leq F(a_3), \, x\in S^{p-1}\}\subset \R \times S^p $$ via $\Phi$ then $Y\cup X\subset \R \times S^p$ is smooth except for boundary points with $\tilde t=a_3$.

Applying Theorem \ref{Perelman gluing}, one sees that the functions $f$ and $h$ can be smoothed at the gluing area $t=a_3$ 
to obtain smooth functions which will be also denoted by $f,h$ such that the new metric on $E_{\vert Z_2\cup Z_3}$ is smooth has positive scalar curvature and restricts to a $Ric >0$ metric on $(\partial E)_{\vert Z_2\cup Z_3}$.

This completes the construction of a metric on  $E\square V$ which will be later modified to yield a metric with all the properties stated in Theorem \ref{plumbing theorem}. To do so, we will first show that the metric above can be chosen such that the boundary of $E\square V$ has nonnegative mean curvature.

\subsection{Nonnegative mean curvature}
We claim that the construction above can be carried out such that the boundary of $E\square V$ has nonnegative mean curvature. For the boundary part belonging to $V$ this is the case since $V$ admits a nice metric coordinate function (see Definition \ref{nice coordinate function}). It remains to consider the mean curvature of $\partial E\subset E$ at point in $\partial E$ over $S^q \setminus B^q_+$. Note that the bundle $E_{\vert B^q_-\cup Z_1}\to B^q_-\cup Z_1$ of hemispheres embeds isometrically into a bundle $F\to B^q_-\cup Z_1$ of round $p$-dimensional spheres such that $\partial E_{\vert B^q_-\cup Z_1}$ is a totally geodesic submanifold of $F$ (the construction uses the connection on $P$ and the associated $S^p$-bundle). In particular, $\partial E$ has vanishing mean curvature over $B^q_-\cup Z_1$. This also follows by direct computation from the fact that the fibers of $E_{\vert B^q_-\cup Z_1}\to B^q_-\cup Z_1$ are totally geodesic hemispheres.

Next, consider the boundary $\partial E$ over $Z_2$. Recall that the metric $g_2$ on $E_{\vert Z_2}=Z_2\times D^p=[a_2,b_2]\times S^{q-1}\times D^p$ is of the form $dt^2 +k^2(t)ds^2_{q-1}+g(t)$, $t\in [a_2,b_2]$, with $(D^p, g(t))=B^p_{\epsilon (t)}(r)$, where $\epsilon (t):[a_2,b_2]\to \R _{>0}$ is a smooth monotonously decreasing function, constant near the endpoints, with $\epsilon (t)=\pi /2$ for $t\leq \tau _1$, $\epsilon (t)=\epsilon (b_2)$ for $t\geq \tau _2$, and $\epsilon ^\prime (t)<0$ for $\tau _1 <t <\tau _2$.  Arguing as before, it follows directly that the mean curvature vanishes for points with  $t\leq \tau _1$. For points with $t\geq \tau _2$ the unit normal vector is equal to $-\partial s$ and a direct computation using formula (\ref{koszul}) shows that the mean curvature is positive.CHECK!

The metric on $E_{\vert Z_2}$ has the form $dt^2 + k^2(t) ds^2_{q-1} +f^2(t,s) ds^2_{p-1}$, $t\in [a_2,b_2]$, $s\in [0, s(t)]$, where $f(t,s):=\frac r { \sin (\epsilon (t))} \sin \left( \frac {s \sin (\epsilon (t))} r\right )$ and $s(t):=\frac {\epsilon (t)r}  {\sin (\epsilon (t))}$. The normal vector field of the boundary $(\partial E)_{\vert Z_2}$ at $(t,s(t))$ is given by $\nu =(s^\prime (t)\partial _t - \partial _s)/\sqrt {1 + s^\prime (t)^2}$.

The second fundamental form may be computed using formula (\ref{koszul}). The computation shows that if $r$ is sufficiently small, then for all $t\in (\tau _1, \tau _2)$ the second fundamental form restricted to $TS^{p-1}$ is positive and its principal curvatures dominate the other principal curvatures. Hence, for $r$ sufficiently small, the mean curvature for points in $(\partial E)_{\vert (\tau _1,\tau _2)\times S^{q-1}}$ is positive.

\medskip
We are left to show that the mean curvature of $\partial E$ at points over $Z_3$ is nonnegative. Recall that $[a_3,b_3]\times D^p$ has been identified with $$X=\{ (\tilde t, (s, x))\in \R \times S^p \, \mid \, \tilde t\in [a_3,\tilde b_3],\, s\leq F(\tilde t), \, x\in S^{p-1}\}\subset \R \times S^p ,$$
where $\varphi (t):[a_3, b_3]\to [a_3, \tilde b_3 ]$, $t\mapsto \tilde t$, is a diffeomorphism
with $\varphi (a_3)=a_3$, $F(\tilde t)=\beta N\arcsin \left (\frac {f(t)}{\beta N}\right)$, and  the curve $t \to (\tilde t, F(\tilde t))$ is parametrized by arc length.

Recall also that $g_X$ denotes the metric on $X$ induced by the metric on $\R \times S^p (\beta N)$, that the metric $\bar g_{f,h}$ on $E_{\vert Z_3}=[a_3,b_3]\times S^{q-1}\times D^p$ restricted to $[a_3,b_3]\times \{pt\} \times D^p$ is isometric to $g_X$ under this identification, and $\bar g_{f,h}$ restricted to $\{t\}\times S^{q-1}\times \{pt\}$ is equal to $h^2(t)ds^2_{q-1}$. The functions $f,h$ are defined in \ref{subsubsection functions}.

In the following we will work with $X\times S^{q-1}$. As before, let $\hat g_{f,h}$ denote the metric induced by $\bar g_{f,h}$ under the identification $E_{\vert Z_3}\cong X\times S^{q-1}$. Recall that with respect to $\hat g_{f,h}$, the tangent space of $\partial X\times S^{q-1}$ at $(\tilde t,(F(\tilde t),x),y)$, $\tilde t \in [a_3,\tilde b_3]$, $x\in S^{p-1}$, $y\in S^{q-1}$, is the orthogonal direct sum
$$\R \langle \partial _{\tilde t} + F^\prime (\tilde t) \partial _s \rangle \oplus T_xS^{p-1}\oplus T_yS^{q-1}.$$

To compute the second fundamental form, principal curvatures, and mean curvature, we use formula (\ref{koszul}) (alternatively, one can view the boundary $\partial X$ as the graph of the inverse of $F$ and apply the formulas given in \cite[Lemma 2.5]{R24I}). We will express the mean curvature in terms of $f,h$ and will use the following formulas which can be obtained by a direct computation:
\begin{equation}\label{F f formulas 1}F^\prime (\tilde t)=\frac {f^{\prime }(t)}{\sqrt { 1- \frac {f^2(t)}{(\beta N)^2}-f^{\prime 2}(t)}},\quad \sqrt {1+F^{\prime 2}(\tilde t)}=\frac 1 {\varphi ^\prime(t)}=\sqrt {\frac { 1- \frac {f^2(t)}{(\beta N)^2}} { 1- \frac {f^2(t)}{(\beta N)^2}-f^{\prime 2}(t)}} \quad \text{and}\end{equation}
\begin{equation}\label{F f formulas 2}F^{\prime \prime }(\tilde t)=\frac {\sqrt {1- \frac {f^2(t)}{(\beta N)^2}}}{(1- \frac {f^2(t)}{(\beta N)^2}-f^{\prime 2}(t))^3}\left ( f^{\prime \prime }(t)\left (1- \frac {f^2(t)}{(\beta N)^2} \right ) + \frac { f^{\prime 2} (t)f(t)}{(\beta N)^2} \right),\end{equation}
where $\tilde t =\varphi (t)$.

Note that by (\ref{koszul}) the second fundamental form $\mathrm{I\!I}(A,B)$ at a boundary point vanishes if $A$ is tangent to $\partial X$ and $B$ is tangent to $S^{q-1}$ and that $\mathrm{I\!I}$ restricted to $T\partial X$ is equal to the second fundamental form $\mathrm{I\!I}^{\partial X}$ of $\partial X\subset X$. Moreover, its restriction  to $TS^{q-1}$, which will be denoted by $\mathrm{I\!I}^{\star}$, is given by
\begin{equation}\mathrm{I\!I}^{\star}=-\frac {F^\prime (\tilde t)}{\sqrt {1 + F^\prime (\tilde t)^2}}\frac {h^\prime (t)h(t)}{\varphi ^\prime(t)} ds^2_{q-1} = -F^\prime (\tilde t)h^\prime (t)h(t)ds^2_{q-1}\tag{$\star$} \end{equation}
Hence,
$$\mathrm{I\!I}=\left ( \begin{smallmatrix} \mathrm{I\!I}^{\partial X} & 0 \\ 0 & \mathrm{I\!I}^{\star} \end{smallmatrix}\right).$$

The principal curvature in direction $\partial _{\tilde t} + F^\prime (\tilde t) \partial _s$ is given by the curvature of the curve $(\tilde t, F(\tilde t))$,
$$\mathrm{I\!I}^{\partial X}\left (\frac {\partial _{\tilde t} + F^\prime (\tilde t) \partial _s}{\sqrt {1+ F ^{\prime 2}(\tilde t)}},\frac {\partial _{\tilde t} + F^\prime (\tilde t) \partial _s}{\sqrt {1+ F ^{\prime 2}(\tilde t)}}\right )=-\frac {F^{\prime \prime}(\tilde t)}{(1+F^\prime (\tilde t)^2)^{3/2}}.$$

For $A,A_1,A_2$ tangent to $S^{p-1}$ at  $(\tilde t,(F(\tilde t),x),y)$, one obtains:
$$\mathrm{I\!I}^{\partial X}\left (\partial _{\tilde t} + F^\prime (\tilde t) \partial _s,A\right )=0\quad \text{and}$$
$$\mathrm{I\!I}^{\partial X}\left (A_1,A_2\right )=\frac{1}{\sqrt {1+F^{\prime 2}(\tilde t)}}\frac {\cot (F(\tilde t)/(\beta N))}{\beta N} ds^2_{p-1}(A_1,A_2).$$

Hence, the mean curvature is equal to
$$-\frac {F^{\prime \prime}(\tilde t)}{(1+F^{\prime 2} (\tilde t))^{3/2}}+(p-1)\frac{1}{\sqrt {1+F^{\prime 2}(\tilde t)}}\frac {\cot (F(\tilde t)/(\beta N))}{\beta N}-(q-1)F^\prime (\tilde t)h^\prime (t)h(t),\quad \tilde t =\varphi (t),$$
which is nonnegative if and only if

\begin{equation}\label{nonneg MC}-F^{\prime \prime}(\tilde t)+(p-1)(1+F^{\prime 2}(\tilde t))\frac {\cot (F(\tilde t)/(\beta N))}{\beta N}-(q-1) (1+F^{\prime 2}(\tilde t))^{3/2}F^\prime (\tilde t)h^\prime (t)h(t)\geq 0 .\end{equation}

Using formulas (\ref{F f formulas 1}) and (\ref{F f formulas 2}) one computes that (\ref{nonneg MC}) is satisfied if and only if
\begin{equation}\label{nonneg MC1}
{\cal A}_\beta (t)-{\cal B}_\beta (t)\geq 0,
\end{equation}
where
$${\cal A}_\beta (t):=
(p-1)\left(1- \frac {f^2(t)}{(\beta N)^2}-f^{\prime 2}(t)\right )^2\sqrt{1- \frac {f^2(t)}{(\beta N)^2}}\frac {\cot (F(\tilde t)/(\beta N))}{\beta N}$$
and
$${\cal B}_\beta (t):=\left ( f^{\prime \prime }(t)\left (1- \frac {f^2(t)}{(\beta N)^2} \right ) + \frac { f^{\prime 2} (t)f(t)}{(\beta N)^2} \right)+(q-1)\left (1- \frac {f^2(t)}{(\beta N)^2}-f^{\prime 2}(t) \right)\left (1- \frac {f^2(t)}{(\beta N)^2}\right )f^\prime (t)h^\prime (t)h(t).$$

It follows from the construction in \ref{subsubsection functions} that for $0<\delta < \sin ^2(R/N)$ we may choose $f$ such that $\cos (R/N)<f^\prime (t_1)< \cos (R/N) + \delta $.  Hence, $0< \sin ^2(R/N)- \delta <1-f^{\prime 2} (t)$ for all $t\in [a_3,t_1]$.

Recall that $\frac {f(t_1)}{h(t_1)} \cdot \frac \rho N <\tilde \epsilon  < \sin (R/N)$ and that $\beta >\frac {h(t_1)}\rho$ satisfies $\frac {f(t_1)}{\beta N}<\tilde \epsilon $. Note that for
$\beta  \to \infty$, the function $f(t)/(\beta N)$ on $[a_3,t_1]$ goes uniformly to $0$.

We will see that for $\beta \gg 0$ and $b $ small the mean curvature is nonnegative for $t\leq t_1$. 
Note that for any $\epsilon ^\prime > 0$
$$\vert {\cal A}_\beta (t)-(p-1)(1-f^{\prime 2}(t) )^2 F(\tilde t)\vert <\epsilon ^\prime$$
for all $t \in [a_3,t_1]$ provided $\beta $ is sufficiently large.
Using $f(t)=bf_C(t)$, $h(t)=ah_0(t)$, $h_0^\prime (t)=e^{-\frac 1 2 h_0^2(t)}$ and $f^{\prime \prime}_C(t)=Ce^{-h_0^2(t)}f_C(t)$, one computes that
{\small{
$${\cal B}_\beta (t)=\left (bCf_C(t)h_0^{\prime 2}(t)\left (1- \frac {f^2(t)}{(\beta N)^2} \right ) + \frac {f^{\prime 2} (t)f(t)}{(\beta N)^2}\right )
+(q-1)a^2\left (1- \frac {f^2(t)}{(\beta N)^2}-f^{\prime 2}(t) \right)\left (1- \frac {f^2(t)}{(\beta N)^2}\right )f^\prime (t)h_0^\prime (t)h_0(t).$$}}Note that $(f_Ch_0^\prime):\R _{\geq 0}\to \R _{>0}$ is uniformly bounded from above since $\lim _{t\to \infty} (f_C h_0^\prime )(t)= 0$. Also $h_0(t)\leq 1$ for all $t\in [a_3,t_1]$. Hence, ${\cal B}_\beta (t)>C_1b +C_2a^2$ for all $t\in [a_3,t_1]$ where $C_1,C_2$ are positive constants. Since $\lim _{b\to 0}a=0$, it follows that for any $\epsilon ^{\prime \prime} > 0$, one has ${\cal B}_\beta (t)<\epsilon ^{\prime \prime }$ for all $t\in [a_3,t_1]$ provided $b$ is sufficiently small.

Since $F(\tilde t)\geq F(a_3)>0$, we see that for $\beta \gg 0$ and $b$ sufficiently small, ${\cal A}_\beta (t)-{\cal B}_\beta (t)\geq 0$  for all $t\in [a_3,t_1]$. Hence, the mean curvature for 
boundary points with $t\in [a_3,t_1]$ is nonnegative.

Let us now consider the mean curvature for boundary points with $t_1 < t\leq b_3$. Recall from \ref{subsubsection functions} that $f^{\prime \prime }\leq 0$  for $t\in [t_1,b_3]$. It follows that for boundary points with $t_1 < t\leq b_3$ the principal curvature 
in direction $\partial _{\tilde t} + F^\prime (\tilde t) \partial _s$ is nonnegative.  Arguing as above one finds that the mean curvature for these boundary points is also nonnegative.

\medskip

The metric on $E\square V$ is obtained from pieces, each of nonnegative mean curvature. These pieces glue together to a continuous metric which is smooth away from $t=a_3$ and $t=t_1$. Smoothing at $t=a_3$ (resp. $t=t_1$), using Perelman's Theorem \ref{Perelman gluing}, yields the metric on $E\square V$.

We discuss the effect on the mean curvature of the smoothing at $t=t_1$ (the case $t=a_3$ can be treated in a similar way). As before, let us denote the functions  $f$ and $h$ of the pieces near $t=t_1$ by $f_{l }, f_r$ and $h_l,h_r$, respectively. Recall that $h_l,h_r$ define $h$ whereas the function defined by $f_l,f_r$ is only $C^1$ at $t=t_1$.

The smoothing at $t=t_1$ is obtained by an interpolation of $f_l,f_r$ near $t=t_1$. The interpolation can be chosen such that the first and second derivatives of the interpolation are bounded by the maximum and minimum of the corresponding derivatives of $f_l$ and $f_r$ on the pieces near $t=t_1$, see \cite[\S 2.2]{BWW19} for details.

Arguing as before, it follows that for $\beta \gg 0$ and $b$ sufficiently small the mean curvature of $(\partial E)_{\vert Z_3}$ with respect to the interpolations of $f_l,f_r$ near $t=t_1$ remains nonnegative.

\medskip

Altogether, we see that the metric $g_{E \square V}$ on $E \square V$ can be chosen such that the boundary of $E\square V$ has also nonnegative mean curvature.

Note that at $p_i\in Z_1$ there is, up to a scaling factor $\alpha $, an isometric embedding $\psi_i$ of $D^q_{\epsilon_i} (1) \times B^p_{\pi /2}(r)$ into $(E\square V, g_{E \square V})$, where $r>0$ can be chosen to be arbitrarily small. It follows from the construction above, that  $\psi _i$ is a nice metric coordinate function of $E\square V$ which can be used for a subsequent plumbing.

By iterating this process, one obtains a $scal>0$ metric $g_{W\square V}$ on $W\square V$ which restricts to a $Ric>0$ metric $g_{\partial (W\square V)}$ on $\partial (W\square V)$ such that the boundary has nonnegative mean curvature.

\medskip
Finally, we add a metric color $(\partial (W\square V)\times [0,\delta], g_{\partial (W\square V)} +ds^2)$  to $(W\square V,g_{W\square V})$ along $\partial (W\square V)=\partial (W\square V)\times \{0\}$ to obtain a $C^0$-metric on the union. By Remark \ref{GL mean curvature remark}, this metric can be deformed near the gluing area to yield a smooth metric of positive scalar curvature on $(\partial (W\square V)\times [0,\delta])\cup W\square V\cong W\square V$. The resulting metric has positive Ricci curvature at the boundary and is of product type near the boundary.

This finishes the proof of Theorem \ref{plumbing theorem}.\qed

{\small 

}

\bigskip
\noindent
\textsc{Department of Mathematics, University of Fribourg, Switzerland}\\
{\em E-mail address:} \textrm{anand.dessai@unifr.ch}

\end{document}